\newtheorem{df}{Definition}[section]
\newtheorem{lemma}[df]{Lemma}
\newtheorem{prop}[df]{Proposition}
\newtheorem{thm}[df]{Theorem}
\newtheorem{cor}[df]{Corollary}
\makeatletter \@addtoreset{equation}{section}
\newcommand{\coloneqq}{:=}
\newcommand{\cal}{\mathcal}
\newcommand{\bes}{\begin{displaymath}}
\newcommand{\ees}{\end{displaymath}}
\newcommand{\be}{\begin{equation}}
\newcommand{\ee}{\end{equation}}
\newcommand{\ba}{\begin{eqnarray}}
\newcommand{\ea}{\end{eqnarray}}
\newcommand{\bas}{\begin{eqnarray*}}
\newcommand{\eas}{\end{eqnarray*}}
\newcommand{\@Bbb}[1]{\ensuremath{\mathbb #1}}
\newcommand{\B}{{\@Bbb B}}
\newcommand{\C}{{\@Bbb C}}
\newcommand{\F}{{\@Bbb F}}
\renewcommand{\P}{{\mathbb P}}
\newcommand{\bbP}{{\P}}
\newcommand{\bbE}{{\mathbb E}}
\newcommand{\Q}{{\@Bbb Q}}
\newcommand{\bQ}{{\@Bbb Q}}
\newcommand{\N}{{\@Bbb N}}
\newcommand{\bbR}{{\@Bbb R}}
\newcommand{\W}{{\@Bbb W}}
\newcommand{\bbZ}{{\@Bbb Z}}
\newcommand{\bbT}{{\@Bbb T}}
\newcommand{\si}{\sigma}
\newcommand{\Om}{\Omega}
\newcommand{\om}{\omega}
\newcommand{\@s}[1]{\ensuremath{\mathcal #1}}
\newcommand{\cA}{\@s A}
\newcommand{\cB}{\@s B}
\newcommand{\cC}{\@s C}
\newcommand{\cD}{\@s D}
\newcommand{\cE}{\@s E}
\newcommand{\cF}{\@s F}
\newcommand{\cG}{\@s G}
\newcommand{\cH}{\@s H}
\newcommand{\cI}{\@s I}
\newcommand{\cJ}{\@s J}
\newcommand{\cK}{\@s K}
\newcommand{\cL}{\@s L}
\newcommand{\cN}{\@s N}
\newcommand{\cM}{\@s M}
\newcommand{\cO}{\@s O}
\newcommand{\cP}{\@s P}
\newcommand{\cR}{\@s R}
\newcommand{\cS}{\@s S}
\newcommand{\cT}{\@s T}
\newcommand{\cV}{\@s V}
\newcommand{\cW}{\@s W}
\newcommand{\cX}{\@s X}
\newcommand{\cY}{\@s Y}
\newcommand{\cZ}{\@s Z}
\newcommand{\@bm}[1]{\ensuremath{\mathbf #1}}
\newcommand{\bma}{\@bm a}
\newcommand{\bmb}{\@bm b}
\newcommand{\bmc}{\@bm c}
\newcommand{\bmd}{\@bm d}
\newcommand{\bme}{\@bm e}
\newcommand{\bmf}{\@bm f}
\newcommand{\bmg}{\@bm g}
\newcommand{\bmh}{\@bm h}
\newcommand{\bmi}{\@bm i}
\newcommand{\bmj}{\@bm j}
\newcommand{\bmk}{\@bm k}
\newcommand{\bml}{\@bm l}
\newcommand{\bmm}{\@bm m}
\newcommand{\bmn}{\@bm n}
\newcommand{\bmo}{\@bm o}
\newcommand{\bmp}{\@bm p}
\newcommand{\bmq}{\@bm q}
\newcommand{\bmr}{\@bm r}
\newcommand{\bms}{\@bm s}
\newcommand{\bmt}{\@bm t}
\newcommand{\bmu}{\@bm u}
\newcommand{\bmw}{\@bm w}
\newcommand{\bmv}{\@bm v}
\newcommand{\bmx}{\@bm x}
\newcommand{\bx}{\@bm x}
\newcommand{\bmy}{\@bm y}
\newcommand{\bz}{\@bm z}
\newcommand{\by}{\@bm y}
\newcommand{\bmzero}{\@bm 0}
\newcommand{\ga}{\gamma}
\newcommand{\@g}[1]{\ensuremath{\mathfrak #1}}
\newcommand{\gA}{\@g A}
\newcommand{\gD}{\@g D}
\newcommand{\gJ}{\@g J}
\newcommand{\gF}{\@g F}
\newcommand{\gM}{\@g M}
\newcommand{\gR}{\@g R}
\newcommand{\commentout}[1]{{}}
\author{Tomasz Komorowski}
\address{Institute of Mathematics,  UMCS,
pl. Marii Curie-Sk\l odowskiej 1,
20-031, Lublin and
IMPAN,
ul. \'{S}niadeckich 8, 00-956 Warsaw, Poland, e-mail:
komorow@hektor.umcs.lublin.pl }
\author{Anna Walczuk}
\address{Institute of Mathematics,  UMCS,
pl. Marii Curie-Sk\l odowskiej 1,
20-031, Lublin and
e-mail:
walczuk.anna@gmail.com}
\thanks{Work of T. K. has been partially supported by Polish
MNiSW  grant NN 201419139.}
\begin{document}

\title[CLT for Markov processes]{Central limit theorem for Markov processes with spectral gap  in the Wasserstein metric}

\maketitle

\begin{abstract}
Suppose that  $\{X_t,\,t\ge0\}$ is  a non-stationary Markov process, taking values in a Polish metric space $E$. We prove the law of large numbers and  central limit theorem for an additive functional of the form  $\int_0^T\psi(X_s)ds$, provided that  the dual transition probability semigroup,  defined on measures, is strongly contractive in an appropriate Wasserstein metric. Function $\psi$ is assumed to be  Lipschitz on $E$.
\end{abstract}

\section{Introduction}

Suppose that $(E,\rho)$ is a Polish metric space with 
 $\cB(E)$  its Borel  $\si$-algebra and $\{X_t,\,t\ge0\}$ is a Markov process given over a certain probability space $(\Omega,{\frak F},\bbP)$. One of the fundamental problems of  classical probability theory is the question about the asymptotic behavior
of the functional $\int_0^T\psi(X_t)dt$, as $T\to+\infty$, where $\psi:E\to\bbR$ is a Borel measurable function, called an observable. One may inquire  whether the law of large numbers holds, i.e.  whether time averages
$T^{-1}\int_0^T\psi(X_t)dt$ converge in some sense to a constant, say $v_*$. If this is  the case one could further ask about the size of fluctuations around $v_*$. Typically, if the observable is not ''unusually large'', nor the process stays for a long time in the same region, properly scaled fluctuations can be described by a Gaussian random variable. This is  the contents of the central limit theorem, which states that the random variables $S_T/\sqrt{T}$, where
\begin{equation}
\label{010602}
S_T:=\int_0^T[\psi(X_s)-v_*]ds
\end{equation}
converge in law, as $T\to+\infty$, to a finite variance, centered normal random variable.

The question of the central limit theorem for an additive functional of a Markov process is a  fundamental one in classical probability theory. It can be traced back to 
the 1937 seminal article of  W. Doeblin, see  \cite{doeblin}, where the central limit theorem for
discrete time, countable Markov chains, has been shown  assuming, what is now known as  strong \emph{Doeblin's} condition. Generalizing these ideas one can prove the theorem for  more relaxed mixing conditions, such as geometric ergodicity, see e.g. Chapter 17 of \cite{meyn-tweedie}, or in the stationary setting   a spectral gap for the generator $L$ of the process in an appropriate $L^p(\mu_*)$ space, where $\mu_*$ is an invariant measure of the process, see e.g. Chapter  VI of \cite{rosenblatt}.

Starting from the 1960-s, another approach has been developed for proving central limit theorems for stationary and ergodic Markov processes, see \cite{gordin,gordin1} for the case of discrete time Markov chains and \cite{B} for continuous time Markov processes.
One uses  the solution of the Poisson equation
$-L\chi = \psi$ in $L^2(\mu_*)$ to decompose  $S_T$ into a martingale (the so called martingale approximation of $S_T$) plus a negligible term, thus
reducing the problem to a central limit theorem for martingales.  A sufficient condition for the existence of the solution to the Poisson equation is again the spectral gap of the generator. Sometimes, when $\psi$ is ''more regular'' it is quite useful to consider a smaller  (than $L^p(\mu_*)$) space, where one can prove the existence of the spectral gap, which otherwise might not exist in  the entire $L^p(\mu_*)$, see \cite{durieu,liverani}.

 In the
following decades, the martingale  approach has been developed also in the case when the Poisson equation has only an approximate solution, which converges in some sense to a generalized solution.
Using this approach it has been proved by 
Kipnis and Varadhan \cite{kipnis-varadhan} that in the case of reversible Markov processes the central limit theorem holds, provided that the  variance of $S_T/\sqrt{T}$ stays bounded, as $T\to+\infty$. The argument can be generalized also to some non-reversible processes, see e.g. \cite{varadhan} for quasi-reversible, \cite{derrennic-lin1,gordin2} for normal processes. Fairly general conditions for the central limit theorem obtained by an application of this method are formulated for discrete time, stationary Markov chains  in e.g. \cite{dedecker, maxwell-woodrofe,wu} and in \cite{holzmann} for continuous time processes.  An interesting  necessary and sufficient condition for  validity of the martingale approximation for an additive functional of a stationary Markov process of the form   \eqref{010602} can be formulated  in terms of convergence of the solutions of the corresponding resolvent equation,   see  \cite{holzmann}.

In the context of stationary Markov chains it is also worthwhile to mention the class of results where the central limit theorem (or invariance principle) is proved for a non-stationary chain starting at almost every point with respect to the stationary law - the so called quenched central limit theorem, see e.g. \cite{BI,cuny, cuny1, derrennic-lin}.   
At the end of this brief review of the existing  literature we remark  that the list of citations presented above is far from being complete.

Recently, some results have been obtained that claim   the existence of  an asymptotically stable, unique invariant measure for some classes of Markov processes, including those for which the state space 
 needs not be locally compact, see \cite{HM,HM1,Szarek,KPS10}. 
The stability we have in mind involves the convergence of the law of $X_t$ to the invariant measure  in the weak sense, as it is typical for an infinite dimensional setting.  
The Markov processes considered in the aforementioned papers  satisfy either the asymptotic strong Feller property introduced in \cite{HM}, or a somewhat weaker  e-property (see \cite{KPS10}).  In many situations they correspond to the dynamics described by a stochastically perturbed dissipative system, such as e.g. Navier-Stokes equations in two dimensions with a random forcing.

In the present article we show  the law of large numbers and central limit theorem (see Theorem \ref{lab3} below) for an additive functional of the form \eqref{010602}, with $\psi$ Lipschitz regular, for a class of Markov processes $\{X_t,\,t\ge0\}$ that besides some additional technical assumptions satisfy:  1) the strong contractive property in the Wasserstein metric for the  transfer operator semigroup associated with the process    (hypothesis H1) formulated below)
and 2) the existence of an appropriate Lyapunov function (hypothesis H3)). The technical hypotheses mentioned above include:  3) Feller property, stochastic continuity of the process (hypothesis H0)), 4) the existence of a moment of order $2+\delta$, for some $\delta>0$, for the transition probabilities (hypothesis H2)).
We stress that the processes considered  in Theorem \ref{lab3} presented below need not be stationary. In this context  the results of  \cite{guivarch,ibragimov-linnik,shirikyan} and \cite{walczuk} should be mentioned. In Theorem 19.1.1 of \cite{ibragimov-linnik} the central limit theorem is proved for every starting point  of a Markov chain that is stable in the total variation metric (this is equivalent with the uniform mixing property of the chain). In \cite{guivarch}  the theorem of this type is shown for  a     chain taking values in  a compact,  metric  state space   satisfying a stability condition that can be expressed in terms of the Wasserstein metric. The proof is conducted, via a  spectral analysis argument, applying   an analytic perturbation technique to  the   transition probability operator considered on the Banach space of bounded Lipschitz functions.  It is not clear that this kind of approach could work in our case, i.e.  for   continuous time Markov processes whose state space is allowed to be  non-compact and an observable that may be unbounded (we only require  it to be Lipschitz).  In \cite{walczuk}  Markov processes stable in the Wasserstein type metric,  stronger than the one considered here, have been examined and an analogue of   Theorem \ref{lab3}  has been shown.  After finishing this manuscript we have learned about the results of \cite{shirikyan}, where the central limit theorem for solutions of Navier-Stokes equations has been studied.

 In Section \ref{dissipative}  we apply our main result in two situations. The first application (see Section \ref{dissipative1})   concerns the  asymptotic behavior of  an additive functional    \eqref{010602}  associated with  a solution of an infinite dimensional stochastic differential equation with a dissipative drift and an additive noise (see \eqref{lab50} below), see Theorem \ref{thm012901}. Another application, presented in Section \ref{sec7.2}, is the central limit theorem  for a smooth observable of the Eulerian velocity field  that solves a  two dimensional stochastic Navier-Stokes equation (N.S.E.) system and relies on the results of \cite{HM,HM1}. It generalizes the central limit theorems for solutions of N.S.E. system forced by Gaussian white noise  that have been shown in   \cite{shirikyan}.

Finally, we describe briefly the  proof of Theorem \ref{lab3}.  The main tool we employ 
 is a suitably adapted  martingale decomposition of the additive functional in question, see Section \ref{mart-decomp}. In fact as a by-product, using the argument from Chapter 2 of  \cite{klo},  we obtain a martingale central limit theorem (see Theorem \ref{lab30b}) for a class of square integrable martingales, that  could be considered a slight generalization of  Theorem 2 of \cite{brown}. We need not assume  stationary increments, but suppose instead that the quadratic variation satisfies some form of the law of large numbers, see hypothesis M2).
The proof of this result is given in Appendix \ref{appA}. 


\section{Preliminaries and the formulation of the main result}

\label{prelim}

\subsection{Notation} Let $(E,\rho)$ be a Polish metric space 
 and let
$B(E)$, $C(E)$ and $\textrm{Lip}\, (E)$ (resp. $B_b(E)$, $C_b(E)$ and $\textrm{Lip}_b\, (E)$) be the spaces of all Borel
measurable, continuous and Lipschitz continuous (resp. bounded measurable, continuous and Lipschitz continuous) functions
on $E$, correspondingly.  
The space of all 
Lipschitz continuous 
 functions on $E$ is equipped with the pseudo-norm
$$
\|f\|_{L}:= \sup_{x\not =y} \frac{|f(x)-f(y)|}{\rho(x,y)}.
$$
 It becomes a complete norm on  $\textrm{Lip}\, (E)$  when we identify functions that differ only by a constant. Observe also that Lip$(E)$ is contained in $ C_{lin}(E)$ - the space of all continuous functions $f$, for which there exist $C>0$ and $x_0\in E$ such that $|f(x)|\le C(1+\rho_{x_0}(x))$ for all $x\in E$, where $\rho_{x_0}(x):=\rho(x,x_0)$. We shall denote by $\|f\|_{\infty,K}$ the supremum of $|f(x)|$ on a given set $K$ and omit writing the set in the notation if $K=E$.

Let $\mathcal{P}=\mathcal{P}(E)$ be the space of all Borel probability measures on  $E.$ Its subspace consisting of measures possessing the  absolute moment shall be denoted by    $\mathcal{P}_{1}=\mathcal{P}_{1}(E)$, more precisely $\nu\in\mathcal{P}_1$ iff  
$\int\rho_{x_0}d\nu <\infty$ for some (thus all) $x_{0}\in E$.

For $f \in {\rm Lip}(E)$, $x_0 \in E$ and $\nu \in{\cal  P}_1$, we have in particular
\begin{equation*}
\langle \nu,|f|\rangle \le	\|f\|_{L}\langle \nu,\rho_{x_0}\rangle + |f(x_0)| < +\infty.
\end{equation*}
Note that  $\mathcal{P}_{1}$ is  a complete metric space, when equipped with the Wasserstein metric
\begin{equation*}
d_1(\nu_1,\nu_2):= \sup_{\|f\|_L\leq 1}|\langle \nu_1,f\rangle -\langle \nu_2,f\rangle|,\quad\forall\,\nu_1,\nu_2\in\mathcal{P}_{1},
\end{equation*}
see e.g.   \cite{villani} Theorem 6.9 and Lemma 6.14. Here $\langle \nu,f\rangle:=\int f d\nu$ for any $f\in {\rm Lip}(E)$ and $\nu\in{\cal P}$,

Suppose that $\{X_t,\,t\ge0\}$ is an $E$-valued Markov process, given over a probability space $(\Omega,{\frak F},\bbP)$, whose transition probability semigroup is denoted by  $\{P^t,\,t\ge 0\}$ and initial distribution is given by a Borel probability measure $\mu_0$. Denote by $\bbE$ the expectation corresponding to $\bbP$ and by $\{{\frak F}_t,t\ge0\}$ the natural filtration of the process, i.e. the increasing  family of $\si$-algebras ${\frak F}_t:=\si(X_s,s\le t)$. We shall denote by $\mu P^t$, the dual  transition probability semigroup, describing the evolution of  the law of $X_t$. 
We have $\langle \mu,P^tf\rangle=\langle \mu P^t,f\rangle$ for all $\mu\in{\cal P}(E)$, $f\in B_b(E)$.
Particularly, $\delta_x P^t(dy)=P^t(x,dy)$ are the transition probability functions associated with the process. To abbreviate, for a given Borel probability measure $\mu$ on $E$ and a random variable $Y$, we shall write
$$
\bbE_{\mu}Y:=\int\bbE[Y|X_0=x]\mu(dx)
$$
and $\bbE_xY$ denotes the expectation corresponding to  $\mu=\delta_x$. Likewise we shall write
$\bbP_{\mu}[A]=\bbE_{\mu}1_A$ and $\bbP_{x}[A]=\bbE_{x}1_A$ for any $A\in{\cal F}$.

\subsection{Formulation of  main results}

Below, we state the list of  hypotheses we make in the present article:
\begin{itemize}
\item[H0)] the
semigroup  is \emph{ Feller}, i.e. $P^t(C_b(E))\subset C_b(E)$,
and  stochastically continuous  in the following sense: 
\begin{equation}
\label{continuous-1}
\lim_{t\to0+}P^tf(x)=f(x),\quad\forall\,x\in E,\,f\in C_b(E),
\end{equation}
\item[H1)] 
we have  $ \mu P^{t}\in \mathcal{P}_{1}$, provided that $\mu\in {\cal P}_1$. In addition,
there exist $\hat c, \gamma>0$ such that 
\begin{eqnarray}
\label{022511}
 d_{1}(\mu P^{t},\nu P^{t})\leq \hat c\, e^{-\gamma t}\,d_{1}(\mu,\nu)\label{lab},\quad\forall\,t\geq 0,\, \mu,\nu\in\mathcal{P}_{1}.
 \end{eqnarray}
\item[H2)] 
 for some (thus all) $x_{0}\in E$ there exists $\delta>0$ such that for all  $R<+\infty, $ and $T\geq 0$
\begin{equation}
\sup_{t\in[0,T]}\sup_{x\in B_R(x_0)}\int\rho^{2+\delta}_{x_0}(y)P^t\,(x,dy)<\infty ,\label{lab28}
\end{equation}
We have denoted by $B_R(x_0)$ an open ball of radius $R>0$ centered at $x_0$. 
\item[H3)] 
we assume that  $\rho^{2+\delta}_{x_0}(\cdot)$ for some $x_0$ and $\delta>0$ is  {\em a Lyapunov function} for the given process $\{X_t,\,t\ge0\}$. More specifically, we suppose that there exists $x_{0}\in E$ and $\delta>0$  such that
\begin{equation}
A_*:=\sup_{t\geq 0}\bbE\rho^{2+\delta}_{x_0}(X_t)<\infty ,\label{lab29}
\end{equation}
 \end{itemize}
 {\bf Remark 1.} Observe that condition \eqref{continuous-1} is obviously equivalent to
 \begin{equation}
\label{continuous}
\lim_{t\to0+}d_1(\delta_xP^t,\delta_x)=0,\quad\forall\,x\in
E.
\end{equation}
 {\bf Remark 2.} 
By choosing smaller of the exponents appearing in H2) and H3) we assume in what follows that the parameters $\delta$ present there are equal.

Our main result can be now formulated as follows.
\begin{thm}\label{lab3}
Suppose that   $\mu_0$ - the law of $X_0$ - belongs to ${\cal P}_1$ and an observable  $\psi \in\mbox{\em Lip}(E)$. Then,  the following are true:
\begin{enumerate}
\item[1)] \label{1} (the weak law of large numbers) if hypotheses H0) and H1)  are satisfied then, there exists a unique invariant  probability measure $\mu_*$. It
belongs to  ${\cal P}_1(E)$ and
\begin{equation}
\lim_{T\to+\infty}\frac{1}{T}\int_{0}^{T}\psi(X_{s})ds= v_{*} \label{spwl}
\end{equation}
in probability, where $v_*:=\langle \mu_*,\psi\rangle$,
\item[2)] (the existence of the asymptotic variance) if   H0) - H3) hold  then, there exists $\si\in[0,+\infty)$ such that
\begin{equation}
\lim_{T\to+\infty}\frac{1}{T}\mathbb{E}\Big[\int_{0}^{T}\tilde \psi(X_{s})ds\Big]^{2}=\sigma^2.\label{D}
\end{equation}
where $\tilde\psi(x):=\psi(x)-v_{*}$,
\item[3)] \label{2} (the central limit theorem) under the assumptions of part 2) we have
\begin{equation}
\lim_{T\to+\infty}\mathbb{P}\Big(\frac{1}{\sqrt T}\int_{0}^{T}\tilde \psi(X_{s})ds<\xi\Big)=\Phi_{\sigma}(\xi),\quad\forall\, \xi\in \mathbb{R} \label{ctg},
\end{equation}
where $\Phi_{\sigma}(\cdot)$ is the distribution function of a centered normal law with variance equal to $\si^2$.
\end{enumerate}
\end{thm}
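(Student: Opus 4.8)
The plan is to establish the three assertions in turn, with a martingale approximation of $S_T=\int_0^T\tilde\psi(X_s)\,ds$ as the backbone. For part 1), H1) makes $\mu\mapsto\mu P^{t_0}$ a strict contraction of the complete metric space $(\mathcal P_1,d_1)$ as soon as $\hat c\,e^{-\gamma t_0}<1$, so Banach's theorem produces a unique fixed point $\mu_*\in\mathcal P_1$, and the semigroup property together with uniqueness forces $\mu_*P^s=\mu_*$ for every $s\ge0$ while $d_1(\mu P^t,\mu_*)\le\hat c\,e^{-\gamma t}d_1(\mu,\mu_*)$ for every $\mu\in\mathcal P_1$. In particular $\mu_0P^s\to\mu_*$ in $d_1$, whence $\sup_{s}\langle\mu_0P^s,\rho_{x_0}\rangle<\infty$ and the family $\{\mu_0P^s\}_{s\ge0}$ is uniformly $\rho_{x_0}$-integrable. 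To obtain \eqref{spwl} I would first treat a bounded Lipschitz $g$ with $\langle\mu_*,g\rangle=0$: then $|\mathbb E_{\mu_0}g(X_s)|\le\|g\|_L\hat c\,e^{-\gamma s}d_1(\mu_0,\mu_*)$ and, using the Markov property and H1), $|\mathrm{Cov}_{\mu_0}(g(X_s),g(X_u))|\le C_g\,e^{-\gamma|u-s|}$, so $T^{-1}\int_0^Tg(X_s)\,ds\to0$ in $L^2(\bbP)$; approximating $\tilde\psi=\psi-v_*$ by such functions and using the uniform $\rho_{x_0}$-integrability to control the error uniformly in $s$ gives \eqref{spwl} in probability.

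For the martingale decomposition, H1) yields that $P^t\tilde\psi$ is Lipschitz with $\|P^t\tilde\psi\|_L\le\|\psi\|_L\hat c\,e^{-\gamma t}$ and $|P^t\tilde\psi(x)|\le C\hat c\,e^{-\gamma t}(1+\rho_{x_0}(x))$, so the corrector $\chi:=\int_0^\infty P^t\tilde\psi\,dt$ is well defined, lies in $C_{lin}(E)$, and formally satisfies $-L\chi=\tilde\psi$. To make the associated Dynkin formula rigorous I would argue first with the resolvents $\chi_\lambda:=R_\lambda\tilde\psi=\int_0^\infty e^{-\lambda t}P^t\tilde\psi\,dt$, which lie in an extended domain of $L$ and produce genuine $L^2$ martingales, and then let $\lambda\downarrow0$: the bounds above together with H2)--H3) give $\chi_\lambda\to\chi$ and $\lambda\chi_\lambda\to0$ in the relevant norms, leading to
\begin{equation*}
\int_0^T\tilde\psi(X_s)\,ds=M_T+\chi(X_0)-\chi(X_T),
\end{equation*}
with $\{M_t\}$ a square integrable martingale, predictable quadratic variation $\langle M\rangle_t=\int_0^t\Gamma(\chi)(X_s)\,ds$ for the carr\'e du champ operator $\Gamma$ (suitably interpreted), and $\sup_t\mathbb E\chi^2(X_t)<\infty$ by H3).

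For part 2) I would first record that $\mu_*$ has a finite $(2+\delta)$-moment (lower semicontinuity of $\nu\mapsto\langle\nu,\rho_{x_0}^{2+\delta}\rangle$ under weak convergence, $\mu_0P^t\to\mu_*$, and H3)), so that $R(\tau):=\langle\mu_*,\tilde\psi\,P^\tau\tilde\psi\rangle$ obeys $|R(\tau)|\le C\hat c\,e^{-\gamma\tau}\langle\mu_*,(1+\rho_{x_0})^2\rangle$ and is absolutely integrable. In the stationary case $T^{-1}\mathbb E_{\mu_*}S_T^2=2\int_0^T(1-\tau/T)R(\tau)\,d\tau\to2\int_0^\infty R(\tau)\,d\tau=:\sigma^2\in[0,\infty)$, which also equals $\langle\mu_*,\Gamma(\chi)\rangle$; for the genuinely non-stationary case I would compare $\mathbb E_{\mu_0}S_T^2$ with $\mathbb E_{\mu_*}S_T^2$ by bounding the discrepancy of the integrands $\langle\mu_0P^u-\mu_*,\tilde\psi\,P^{v-u}\tilde\psi\rangle$ through a modulus of $d_1(\mu_0P^u,\mu_*)\le\hat c\,e^{-\gamma u}d_1(\mu_0,\mu_*)$, obtained by truncating the quadratic-growth function $\tilde\psi\,P^{v-u}\tilde\psi$ and trading its tails against the uniform $(2+\delta)$-moment bounds of H2)--H3), with an extra factor $e^{-\gamma(v-u)}$ to spare; a Ces\`aro estimate then yields \eqref{D}.

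For part 3), since $(\chi(X_0)-\chi(X_T))/\sqrt T\to0$ in probability (being $O(1)$ in $L^2$), Slutsky's lemma reduces the claim to $M_T/\sqrt T\Rightarrow\mathcal N(0,\sigma^2)$, which I would deduce from the martingale central limit theorem \ref{lab30b}: one needs square integrability of $M$ (available), the quadratic-variation law of large numbers $T^{-1}\langle M\rangle_T=T^{-1}\int_0^T\Gamma(\chi)(X_s)\,ds\to\sigma^2$ in probability (hypothesis M2)), and a Lindeberg-type condition on the increments of $M$ that is checked using the extra $\delta$ in H2)--H3). The quadratic-variation LLN would follow from $T^{-1}\mathbb E\langle M\rangle_T\to\sigma^2$ (from part 2)) together with $T^{-2}\mathrm{Var}\big(\int_0^T\Gamma(\chi)(X_s)\,ds\big)\to0$, the latter via a covariance estimate for $\Gamma(\chi)$, most safely run on a discrete skeleton of the chain. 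I expect these last two points to be the main obstacles: the domain theory for the corrector $\chi$ — defined only as an improper time integral, equivalently a resolvent limit, yet required to lie in an extended domain of $L$ on which Dynkin's formula produces an honest $L^2$ martingale — and the quadratic-variation law of large numbers, because $\Gamma(\chi)$ is neither bounded nor Lipschitz (it grows quadratically), so part 1) cannot be invoked for it and its ergodic averaging must instead be recovered from H1) together with the $(2+\delta)$-moment hypotheses.
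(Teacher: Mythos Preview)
Your overall architecture---existence of $\mu_*$ via Banach's fixed point, corrector $\chi=\int_0^\infty P^t\tilde\psi\,dt$, martingale decomposition $S_T=M_T+\chi(X_0)-\chi(X_T)$, then a martingale CLT---is exactly the paper's. The differences lie in execution, and precisely at the two places you yourself flag as obstacles the paper takes a simpler road.

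\textbf{The martingale property.} You propose to reach Dynkin's formula through the resolvents $\chi_\lambda$ and an extended-domain argument, letting $\lambda\downarrow0$. The paper bypasses this entirely: since $\chi\in\mathrm{Lip}(E)$ and grows linearly, one can verify \emph{directly} that $M_T=\chi(X_T)-\chi(X_0)+\int_0^T\tilde\psi(X_s)\,ds$ is a martingale, using only the Markov property and the identity
\[
\int_s^T\mathbb E[\tilde\psi(X_u)\mid\mathfrak F_s]\,du=\int_s^\infty P^{u-s}\tilde\psi(X_s)\,du-\int_T^\infty P^{u-T}(P^{T-s}\tilde\psi)(X_s)\,du=\chi(X_s)-\mathbb E[\chi(X_T)\mid\mathfrak F_s].
\]
No generator, no domain theory, no carr\'e du champ is needed; indeed the paper explicitly remarks that $L$ is not assumed to exist on Lipschitz functions.

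\textbf{The quadratic-variation LLN (condition M2)).} You plan to write $\langle M\rangle_t=\int_0^t\Gamma(\chi)(X_s)\,ds$ and then establish a LLN for $\Gamma(\chi)$ by a variance/covariance estimate, worrying (rightly) that $\Gamma(\chi)$ is neither bounded nor Lipschitz. The paper never introduces $\Gamma(\chi)$. It works with the discrete increments $Z_n=M_n-M_{n-1}$ and the function $H_K(x):=\mathbb E_x\big[K^{-1}M_K^2\big]-\sigma^2$, shows $H_K\in C(E)$ with $|H_K|^{1+\delta/2}$ uniformly integrable under $\mu_0Q_\ell^K$ (from H3)), passes to $\langle\mu_*,|H_K|\rangle$ by weak convergence, and then sends $K\to\infty$ using Birkhoff's theorem for the \emph{stationary} chain. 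This completely sidesteps any covariance estimate for a quadratic-growth observable.

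A minor note on part 1): your $L^2$ argument via the covariance bound $|\mathrm{Cov}_{\mu_0}(g(X_s),g(X_u))|\le C_g e^{-\gamma|u-s|}$ (for bounded Lipschitz $g$ with $\langle\mu_*,g\rangle=0$, using $\sup_s\langle\mu_0P^s,\rho_{x_0}\rangle<\infty$) is correct and arguably more direct than the paper's route, which proves $T^{-1}\mathbb E v(T)\to v_*$ and $T^{-2}\mathbb E v(T)^2\to v_*^2$ via an Arzel\`a--Ascoli compactness argument on $\{P^s\psi\}$. Both then truncate to reach unbounded Lipschitz $\psi$.

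In short: your proposal is not wrong, but the two technical detours you anticipate (resolvent limit for the domain issue, and an ergodic theorem for $\Gamma(\chi)$) are unnecessary. The paper's direct martingale computation and its Birkhoff-under-$\mu_*$ argument for M2) eliminate both obstacles.
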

The proofs of parts 1) and 2) of the above result are presented in Section \ref{sec4} and part 3) is shown in Section \ref{sec5}

\section{Some consequences of hypotheses  H0)-H1)} 


We start with the proof of the existence and uniqueness of the invariant probability measure claimed in part 1) of Theorem \ref{lab3}.
Uniqueness  is obvious,  in light of hypothesis H1), thus we only need to prove the existence part. Suppose that $t_{0}$ is chosen so that $\hat ce^{-\gamma t_{0}}<1.$  Using \eqref{lab} we get that $P^{t_{0}}$ is a contraction on a complete metric space $(\mathcal{P}_1(E),d_1)$. By the Banach contraction mapping principle we find   $\mu_{*}^{0}\in \mathcal{P}_1(E)$
such that $\mu_{*}^{0}P^{t_{0}}=\mu_{*}^{0}.$ Let  $\mu_{*}:= t_{0}^{-1}\int_{0}^{t_{0}}\mu_{*}^{0}P^{s}ds.$
It is easy to check that  $\mu_{*}$ is invariant under  $\{P^t,\,t\geq 0\}$. Indeed,
\begin{eqnarray*}
&&
\mu_{*}P^{t}=\frac{1}{t_{0}}\int_{0}^{t_{0}}\mu_{*}^{0}P^{s+t}ds=\frac{1}{t_{0}}\int_{t}^{t_{0}}\mu_{*}^{0}P^{s}ds
+\frac{1}{t_{0}}\int_{t_0}^{t_{0}+t}\mu_{*}^{0}P^{s}ds\\
&&=\frac{1}{t_{0}}\int_{t}^{t_{0}}\mu_{*}^{0}P^{s}ds
+\frac{1}{t_{0}}\int_{0}^{t}\mu_{*}^{0}P^{s}ds=\mu_{*}.
\end{eqnarray*}
\qed

Define $\mu Q_t:=t^{-1}\int_0^t\mu P^s ds$ for all $t>0$ and $\mu Q_N^*:=N^{-1}\sum_{n=0}^N\mu P^n$ for all integers $N\ge1$. As an easy consequence from the above  and condition H1) we obtain
\begin{prop}
\label{contract}
For any $\mu\in {\cal P}_1(E)$ we have
\begin{equation}
\label{cont}
d_1(\mu P^t,\mu_*)\le \hat ce^{-\gamma t}d_1(\mu,\mu_*),
\end{equation}
\begin{equation}
\label{aver-cont}
d_1(\mu Q_t,\mu_*)\le \frac{\hat c}{t\gamma}(1-e^{-\gamma t})d_1(\mu,\mu_*),\quad\forall\,t\ge0,
\end{equation}
and 
\begin{equation}
\label{aver-conta}
d_1(\mu Q_N^*,\mu_*)\le \frac{\hat c[1-e^{-\gamma (N+1)}]}{N(1-e^{-\gamma})}d_1(\mu,\mu_*),\quad\forall\,N\ge1.
\end{equation}
\end{prop}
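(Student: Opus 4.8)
The plan is as follows. The bound \eqref{cont} requires no work: it is exactly hypothesis H1), i.e.\ inequality \eqref{022511}, evaluated at $\nu:=\mu_*$, combined with the invariance $\mu_*P^t=\mu_*$ just established, which turns the right-hand side into $\hat c\,e^{-\gamma t}d_1(\mu,\mu_*)$.

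For the two averaged estimates the key point is a convexity (averaging) property of the Wasserstein distance: because $d_1(\nu_1,\nu_2)=\sup_{\|f\|_L\le1}|\langle\nu_1,f\rangle-\langle\nu_2,f\rangle|$ is a supremum of the \emph{linear} functionals $\nu\mapsto\langle\nu,f\rangle$, averaging the measures can only decrease $d_1$. Concretely, fix $f\in\mathrm{Lip}(E)$ with $\|f\|_L\le1$. From the definition $\mu Q_t(A)=t^{-1}\int_0^t(\mu P^s)(A)\,ds$ and Tonelli's theorem one obtains $\langle\mu Q_t,f\rangle=t^{-1}\int_0^t\langle\mu P^s,f\rangle\,ds$; here one first checks that $\mu Q_t\in\mathcal{P}_1(E)$ and that the integrand is well defined, which follows because $\|\rho_{x_0}\|_L\le1$ together with \eqref{cont} gives $\langle\mu P^s,\rho_{x_0}\rangle\le\langle\mu_*,\rho_{x_0}\rangle+\hat c\,d_1(\mu,\mu_*)<\infty$ uniformly in $s\in[0,t]$, while measurability of $s\mapsto\langle\mu P^s,f\rangle$ is a consequence of H0). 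Since also $\langle\mu_*,f\rangle=t^{-1}\int_0^t\langle\mu_*,f\rangle\,ds$, subtracting, estimating under the integral sign, and taking the supremum over $\|f\|_L\le1$ yields
\[ d_1(\mu Q_t,\mu_*)\le\frac1t\int_0^td_1(\mu P^s,\mu_*)\,ds , \]
and the identical argument, with the discrete average $\mu Q_N^*$ and a finite sum over $n=0,\dots,N$ in place of the integral, gives $d_1(\mu Q_N^*,\mu_*)\le N^{-1}\sum_{n=0}^Nd_1(\mu P^n,\mu_*)$.

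It then only remains to substitute the pointwise contraction \eqref{cont}, $d_1(\mu P^s,\mu_*)\le\hat c\,e^{-\gamma s}d_1(\mu,\mu_*)$, into these two inequalities and to evaluate $\int_0^te^{-\gamma s}\,ds=\gamma^{-1}(1-e^{-\gamma t})$ and the geometric sum $\sum_{n=0}^Ne^{-\gamma n}=(1-e^{-\gamma(N+1)})/(1-e^{-\gamma})$, which produce \eqref{aver-cont} and \eqref{aver-conta} respectively. The only part of the argument that is not a one-line computation is the bookkeeping in the middle paragraph — verifying that the averaged measures belong to $\mathcal{P}_1(E)$ and that the supremum defining $d_1$ may legitimately be interchanged with the average — and even this is routine once \eqref{cont} and the Feller/stochastic-continuity assumption H0) are in hand.
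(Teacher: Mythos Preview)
Your proof is correct and follows essentially the same approach as the paper: for \eqref{cont} invoke H1) with $\nu=\mu_*$, and for \eqref{aver-cont}, \eqref{aver-conta} test against an arbitrary $\psi$ with $\|\psi\|_L\le1$, pass the difference under the average, and apply the contraction bound before integrating (resp.\ summing) $e^{-\gamma s}$. The paper carries this out in a single line without isolating the intermediate convexity inequality or checking that $\mu Q_t\in\mathcal P_1(E)$, but the underlying computation is identical to yours.
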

\proof
Estimate \eqref{cont} is obvious. To prove \eqref{aver-cont} choose an arbitrary $\psi\in {\rm Lip}(E)$ such that $\|\psi\|_{L}\le 1$. Then
\begin{eqnarray*}
&&
|\langle \mu Q_t,\psi\rangle -\langle\mu_*,\psi\rangle|=\left|\frac{1}{t}\int_0^t\left[\langle \mu P^s,\psi\rangle -\langle\mu_* P^s,\psi\rangle\right]ds\right|\\
&&
\le \frac{\hat c}{t}d_1(\mu,\mu_*)\int_0^t e^{-\gamma s}ds 
\end{eqnarray*}
and \eqref{aver-cont} follows. The proof of \eqref{aver-conta} is analogous.
\qed

\begin{lemma}
For any  $x_0\in E$ there exists a constant $C>0$ such that
\label{bound}
\begin{equation}
\label{042712}
\sup_{t\ge0}\langle \delta_xP^t,\rho_{x_0}\rangle\le  C[\rho_{x_0}(x)+1],
\quad\forall\,x\in E,
\end{equation}
where, as we recall, $\rho_{x_0}(x):=\rho(x,x_0)$.
\end{lemma}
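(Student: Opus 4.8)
The plan is to bound $\langle\delta_xP^t,\rho_{x_0}\rangle$ by exploiting the contraction H1) to compare the law of $X_t$ started at $x$ with the invariant measure $\mu_*$, and then to control the remaining distance by the Lipschitz norm of $\rho_{x_0}$ itself. First I would observe that the function $\rho_{x_0}$ is $1$-Lipschitz on $E$, so by the definition of the Wasserstein distance $d_1$,
\[
\bigl|\langle\delta_xP^t,\rho_{x_0}\rangle-\langle\mu_*,\rho_{x_0}\rangle\bigr|\le d_1(\delta_xP^t,\mu_*).
\]
Since $\mu_*\in{\cal P}_1(E)$ by part 1) of Theorem \ref{lab3}, the quantity $\langle\mu_*,\rho_{x_0}\rangle$ is a finite constant; call it $m_*$.

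Next I would apply the contraction estimate \eqref{cont} from Proposition \ref{contract}, which gives $d_1(\delta_xP^t,\mu_*)\le\hat c\,e^{-\gamma t}d_1(\delta_x,\mu_*)\le\hat c\,d_1(\delta_x,\mu_*)$ for all $t\ge0$. It remains to bound $d_1(\delta_x,\mu_*)$ linearly in $\rho_{x_0}(x)$. Testing against a $1$-Lipschitz $f$ and subtracting the constant $f(x_0)$,
\[
|\langle\delta_x,f\rangle-\langle\mu_*,f\rangle|=|f(x)-\langle\mu_*,f\rangle|\le|f(x)-f(x_0)|+|\langle\mu_*,f-f(x_0)\rangle|\le\rho_{x_0}(x)+m_*,
\]
so $d_1(\delta_x,\mu_*)\le\rho_{x_0}(x)+m_*$. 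Combining the displays, $\langle\delta_xP^t,\rho_{x_0}\rangle\le m_*+\hat c(\rho_{x_0}(x)+m_*)$, which is of the required form $C[\rho_{x_0}(x)+1]$ with $C:=\hat c+m_*(1+\hat c)$, uniformly in $t\ge0$.

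I do not expect any serious obstacle here; the only thing to be careful about is that the estimate needs the invariant measure $\mu_*$ to already be known to lie in ${\cal P}_1(E)$ (so that $m_*<\infty$), which is exactly what was established just above in the discussion following Theorem \ref{lab3}, using H0)–H1) and the Banach fixed point argument. One could alternatively avoid invoking $\mu_*$ and instead use H1) to compare $\delta_xP^t$ with $\delta_{x_0}P^t$ together with a separate bound on $\langle\delta_{x_0}P^t,\rho_{x_0}\rangle$, but that would require an a priori moment bound at the single point $x_0$, so routing through $\mu_*$ is cleaner. The statement is uniform in $t$ precisely because the exponential factor $e^{-\gamma t}\le1$ only helps.
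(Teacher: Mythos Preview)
Your proof is correct and follows essentially the same route as the paper: compare $\langle\delta_xP^t,\rho_{x_0}\rangle$ with $\langle\mu_*,\rho_{x_0}\rangle$ using that $\rho_{x_0}$ is $1$-Lipschitz, then invoke the contraction H1) (the paper writes $d_1(\delta_xP^t,\mu_*P^t)\le\hat c e^{-\gamma t}d_1(\delta_x,\mu_*)$ directly rather than citing Proposition~\ref{contract}). You simply make explicit the bound $d_1(\delta_x,\mu_*)\le\rho_{x_0}(x)+m_*$ that the paper leaves to the reader.
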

{\em Proof.}
From H1) we get
$$
|P^t\rho_{x_0}(x)-\langle\mu_*,\rho_{x_0}\rangle|\le d_1(\delta_x P^t,\mu_*P^t)\le \hat ce^{-\gamma t}d_1(\delta_x,\mu_*).
$$
This estimate implies \eqref{042712}
\qed

Using the above lemma and a standard truncation argument we  conclude that for any $\psi\in C_{lin}(E)$ and $t>s$
 $$
 \bbE[\psi(X_t)|{\frak F}_s]=P^{t-s}\psi(X_s),
 $$
 where $P^t\psi(x):= \langle \delta_x P^t,\psi\rangle$. 
 \begin{lemma}\label{linear}
Suppose that $\psi\in \mbox{\em Lip}(E)$. Then   $P^t\psi\in  \mbox{\em Lip}(E)$ and 
\begin{equation}
\label{052712}
\|P^t\psi\|_{L}\le \hat ce^{-\gamma t}\|\psi\|_L,\quad  \forall\,t\ge0.
\end{equation}
 Moreover, if H2) holds then $P^t(C_{lin}(E))\subset C_{lin}(E)$ for all $t\ge0$.
\end{lemma}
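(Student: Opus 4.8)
The plan is to deduce the Lipschitz estimate \eqref{052712} directly from the duality definition of $d_1$ together with H1), and to obtain the $C_{lin}$-statement by a truncation argument resting on the Feller property H0) and the local moment bound H2).

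First I would record that $P^t\psi(x)=\langle\delta_xP^t,\psi\rangle$ is a well-defined finite number for every $x\in E$ and $t\ge0$: by Lemma~\ref{bound} and the elementary inequality $\langle\nu,|f|\rangle\le\|f\|_L\langle\nu,\rho_{x_0}\rangle+|f(x_0)|$ recorded in Section~\ref{prelim} (applied with $\nu=\delta_xP^t$ and $f=\psi$), one gets $\langle\delta_xP^t,|\psi|\rangle\le\|\psi\|_L C[\rho_{x_0}(x)+1]+|\psi(x_0)|<\infty$. For \eqref{052712}, assume first $\|\psi\|_L\le1$. Since $\delta_x,\delta_y\in\mathcal{P}_1$ for all $x,y$, the definition of $d_1$ gives $|P^t\psi(x)-P^t\psi(y)|=|\langle\delta_xP^t,\psi\rangle-\langle\delta_yP^t,\psi\rangle|\le d_1(\delta_xP^t,\delta_yP^t)$, which by H1) is at most $\hat ce^{-\gamma t}d_1(\delta_x,\delta_y)$. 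I would then observe that $d_1(\delta_x,\delta_y)=\rho(x,y)$: the inequality $\le$ holds because $\|f\|_L\le1$ forces $|f(x)-f(y)|\le\rho(x,y)$, and equality is attained for $f=\rho(\cdot,y)$. Rescaling by $\|\psi\|_L$ (the constant case being trivial) yields \eqref{052712}, and in particular $P^t\psi\in\textrm{Lip}(E)$.

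For the last claim I would assume H2) and take $\psi\in C_{lin}(E)$ with $|\psi(y)|\le C(1+\rho_{x_0}(y))$. Linear growth of $P^t\psi$ is immediate from $|P^t\psi(x)|\le\langle\delta_xP^t,|\psi|\rangle\le C(1+\langle\delta_xP^t,\rho_{x_0}\rangle)$ and Lemma~\ref{bound}. For continuity, fix $t\ge0$ and $R_0>0$, pick a continuous $\theta_R:[0,+\infty)\to[0,1]$ equal to $1$ on $[0,R]$ and to $0$ on $[R+1,+\infty)$, and set $\psi_R:=\psi\cdot(\theta_R\circ\rho_{x_0})$; this is continuous and vanishes outside $B_{R+1}(x_0)$, hence $\psi_R\in C_b(E)$, so $P^t\psi_R\in C_b(E)$ by H0). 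Using $|\psi-\psi_R|\le|\psi|\,\one_{\{\rho_{x_0}>R\}}$ and, for $R\ge1$, the bound $1+\rho_{x_0}(y)\le 2\rho_{x_0}^{2+\delta}(y)R^{-(1+\delta)}$ on $\{\rho_{x_0}(y)>R\}$, I would conclude
\begin{equation*}
\sup_{x\in B_{R_0}(x_0)}|P^t\psi(x)-P^t\psi_R(x)|\le\frac{2C}{R^{1+\delta}}\sup_{x\in B_{R_0}(x_0)}\int\rho_{x_0}^{2+\delta}(y)\,P^t(x,dy),
\end{equation*}
and the right-hand side tends to $0$ as $R\to+\infty$ by H2). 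Hence $P^t\psi$ is a uniform limit of continuous functions on every ball $B_{R_0}(x_0)$, thus continuous on $E$; combined with the linear growth this gives $P^t\psi\in C_{lin}(E)$.

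The only delicate point is the continuity claim: one must check that the truncations $\psi_R$ genuinely belong to $C_b(E)$ and — more importantly — that the tail estimate is uniform in $x$ over balls rather than merely pointwise, which is precisely where the local $(2+\delta)$-moment bound H2) enters. The Lipschitz part involves nothing beyond unwinding the definition of $d_1$ and invoking H1).
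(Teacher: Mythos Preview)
Your proof is correct and follows essentially the same approach as the paper. The Lipschitz estimate is obtained identically (duality plus H1) plus $d_1(\delta_x,\delta_y)=\rho(x,y)$), and the $C_{lin}$-statement is handled by the same truncation-plus-Feller-plus-H2) scheme; the only cosmetic difference is that the paper truncates $\psi$ at level $L$ via $\psi_L=\min(\max(\psi,-L),L)$ rather than using your spatial cutoff $\psi\cdot(\theta_R\circ\rho_{x_0})$, but either truncation works and the tail control via H2) is the same.
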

{\em Proof.}
From H1) we obtain that for any $x,y\in E$, $t\ge0$
\begin{eqnarray*}
&&
|P^t\psi(x)-P^t\psi(y)|\le \|\psi\|_Ld_1(\delta_x P^t,\delta_y P^t)\\
&&
\le \hat c\|\psi\|_Le^{-\gamma t}d_1(\delta_x,\delta_y)= \hat c\|\psi\|_Le^{-\gamma t}\rho(x,y)
\end{eqnarray*}
and \eqref{052712} follows.

Suppose now that $\psi\in C_{lin}(E)$. We prove first that $P^t\psi\in C(E)$. Suppose that $L>1$ and
\begin{equation}
\label{012712}
\psi_L(x):=\left\{
\begin{array}{ll}
\psi(x),&\quad\mbox{ when  }|\psi(x)|\le L,\\
 L,&\quad\mbox{ when  }\psi(x)> L,\\
- L,&\quad\mbox{ when  }\psi(x)<- L.\\
 \end{array}
 \right.
 \end{equation}
Using H2) we conclude easily that for any $R>0$, $x_0\in E$ we have
\begin{equation}
\label{022712}
\lim_{L\to+\infty}\sup_{x\in B_R(x_0)}|P^t\psi(x)-P^t\psi_L(x)|=0.
\end{equation}
From this and H0) we infer that $P^t\psi\in C(E)$.
The fact that $P^t\psi\in C_{lin}(E)$ follows directly from Lemma \ref{bound}.\qed

 \begin{lemma}\label{linear-1}
Suppose that $\psi\in  \mbox{\em Lip}(E)$ and $x\in E$. Then the   function $t\mapsto P^t\psi(x)$ is continuous for all   $t\ge0$. 
\end{lemma}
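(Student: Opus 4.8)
The plan is to fix $\psi\in\mathrm{Lip}(E)$ and $x\in E$, and to establish continuity of $t\mapsto P^t\psi(x)$ in two pieces: right-continuity at an arbitrary $t_0\ge0$ and left-continuity at an arbitrary $t_0>0$. Both reduce, via the semigroup property and the tower property of conditional expectations, to controlling $P^h\psi(y)-\psi(y)$ as $h\downarrow0$, uniformly in $y$ over the relevant set of intermediate states. First I would write, for $h>0$,
\begin{equation*}
P^{t_0+h}\psi(x)-P^{t_0}\psi(x)=\bigl\langle\delta_xP^{t_0},\,P^h\psi-\psi\bigr\rangle,
\end{equation*}
and estimate $|P^h\psi(y)-\psi(y)|\le\|\psi\|_L\,d_1(\delta_yP^h,\delta_y)$. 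By Remark 1 (equation \eqref{continuous}), $d_1(\delta_yP^h,\delta_y)\to0$ for each $y$, so the integrand tends to $0$ pointwise; to pass to the limit under $\langle\delta_xP^{t_0},\cdot\rangle$ I need a domination. Here I would invoke Lemma \ref{bound}: $d_1(\delta_yP^h,\delta_y)\le\langle\delta_yP^h,\rho_{x_0}\rangle+\rho_{x_0}(y)\le C[\rho_{x_0}(y)+1]+\rho_{x_0}(y)$, which is bounded by $C'[\rho_{x_0}(y)+1]$ uniformly in $h\ge0$, and $\langle\delta_xP^{t_0},\rho_{x_0}\rangle<\infty$ again by Lemma \ref{bound}. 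Dominated convergence then gives right-continuity at $t_0$.

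For left-continuity at $t_0>0$, take $0<h<t_0$ and use the semigroup identity in the other direction:
\begin{equation*}
P^{t_0}\psi(x)-P^{t_0-h}\psi(x)=\bigl\langle\delta_xP^{t_0-h},\,P^h\psi-\psi\bigr\rangle.
\end{equation*}
The integrand is again dominated by $C'[\rho_{x_0}(y)+1]$ uniformly in $h$, and $\langle\delta_xP^{t_0-h},\rho_{x_0}\rangle\le C[\rho_{x_0}(x)+1]$ uniformly in $h\in[0,t_0]$, again by Lemma \ref{bound}. The only subtlety is that now both the measure $\delta_xP^{t_0-h}$ and the integrand $P^h\psi-\psi$ vary with $h$; so I would not apply dominated convergence directly but instead bound
\begin{equation*}
|P^{t_0}\psi(x)-P^{t_0-h}\psi(x)|\le\|\psi\|_L\,\langle\delta_xP^{t_0-h},\,d_1(\delta_\cdot P^h,\delta_\cdot)\rangle
\end{equation*}
and split the mass of $\delta_xP^{t_0-h}$ into a large ball $B_R(x_0)$ and its complement. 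On the complement the uniform $(1+\delta)$-type moment bound from Lemma \ref{bound} (or the cruder first-moment bound) makes the contribution small uniformly in $h$ by choosing $R$ large; on $B_R(x_0)$ one still needs $\sup_{y\in B_R(x_0)}d_1(\delta_yP^h,\delta_y)\to0$ as $h\downarrow0$, i.e. a local uniform version of stochastic continuity.

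The main obstacle is exactly this last point: \eqref{continuous} is only asserted pointwise in $y$, not uniformly on balls. I expect the cleanest route is to use H2) together with the Feller property H0) as in the proof of Lemma \ref{linear}: approximate $\psi$ by its truncation $\psi_L$ (using H2) to make $\sup_{y\in B_R(x_0)}|P^h\psi(y)-P^h\psi_L(y)|$ and $\sup_{y}|\psi(y)-\psi_L(y)|$ small, uniformly in small $h$, via the uniform $\rho_{x_0}^{2+\delta}$-moment bound), reducing to the bounded continuous function $\psi_L$. For bounded $\psi_L$ one has $P^h\psi_L\to\psi_L$ pointwise by H0), and then a compactness/tightness argument on $B_R(x_0)$—or, more simply, the observation that it suffices to prove sequential continuity along $h_n\downarrow0$ and to use that $\delta_xP^{t_0-h_n}\to\delta_xP^{t_0}$ in $d_1$ by \eqref{cont}-type continuity plus the already-established right-continuity—closes the gap. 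Once left- and right-continuity are in hand at every $t_0$, the lemma follows.
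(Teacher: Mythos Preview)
Your right-continuity argument is correct, but it is considerably more elaborate than what is needed. The real issue is with left-continuity. Your primary route there invokes H2) (through the truncation $\psi_L$ and the uniform moment bound \eqref{lab28}), but this lemma lives in the section ``Some consequences of hypotheses H0)--H1)''; H2) is not available here, and in the preceding Lemma~\ref{linear} it is explicitly flagged as an \emph{extra} assumption. Without H2) your splitting into $B_R(x_0)$ and its complement stalls: you only have a uniform first-moment bound on $\delta_xP^{t_0-h}$ from Lemma~\ref{bound}, which does not give the uniform integrability of $\rho_{x_0}$ needed to make the tail contribution small.

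The alternative you mention in passing at the very end --- using a contraction step to compare $\delta_xP^{t_0-h}$ with $\delta_xP^{t_0}$ --- is in fact the whole proof, and it dispenses with dominated convergence, truncation, and the left/right split entirely. The paper argues as follows: for any $s,t\ge0$, write $t\vee s=(t\wedge s)+|t-s|$, so that $\delta_xP^{t\vee s}=(\delta_xP^{|t-s|})P^{t\wedge s}$ and $\delta_xP^{t\wedge s}=\delta_x\,P^{t\wedge s}$. Then directly from H1),
\[
|P^t\psi(x)-P^s\psi(x)|\le\|\psi\|_L\,d_1(\delta_xP^t,\delta_xP^s)\le \hat c\,e^{-\gamma(t\wedge s)}\|\psi\|_L\,d_1\bigl(\delta_x,\delta_xP^{|t-s|}\bigr),
\]
and \eqref{continuous} finishes the proof in one stroke. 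The point you missed is that the contraction estimate \eqref{022511} lets you factor out the common $P^{t\wedge s}$ and reduce everything to stochastic continuity at the \emph{fixed} point $x$, with no integration over intermediate states and hence no need for domination or tightness.
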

{\em Proof.}
From H1) and \eqref{052712}
$$
|P^t\psi(x)-P^s\psi(x)|\le \hat ce^{-\gamma( t\wedge s)}\|\psi\|_Ld_1(\delta_x,\delta_xP^{|t-s|}).
$$
Using \eqref{continuous} we conclude the proof of the lemma.\qed

\section{Proofs of parts 1) and 2) of Theorem \ref{lab3}} 
\label{sec4}

Some of the calculations appearing in this section are analogous to those contained in Section 3 of  \cite{walczuk} although  significant modifications are required due to the fact that we work here with a weaker metric and an observable that is allowed to be unbounded.

\subsection{Proof of part 1)} 

In case when the process is stationary (i.e. $\mu_0=\mu_*$) the result is a consequence of the continuous time version of  Birkhoff's pointwise ergodic theorem (the unique invariant measure is  then ergodic). In fact, the convergence claimed in \eqref{spwl} holds then in the almost sure sense. To prove the result in the non-stationary setting
suppose first that $\psi\in C_b(E)\cap \mbox{Lip}(E)$.
Let
 $ v(T):= \int_{0}^{T}\psi(X_{s})ds.$
It  suffices only to show that
 \begin{equation}
 \label{h11}
\lim_{T\to+\infty} \frac1T\mathbb{E} v(T)
=v_{*}\quad\mbox{and}\quad \lim_{T\to+\infty} 
 \frac{1}{T^2}\mathbb{E}v^2(T)= v_{*}^{2}.
\end{equation}
Using the Markov property we can write
\begin{eqnarray}
\label{011712}
&&\frac{1}{T}\mathbb{E}v(T)=\frac{1}{T}\int_{0}^{T}\mathbb{E}\psi(X_{s})ds\\
&&
=\frac{1}{T}\int_{0}^{T}
\langle\mu_{0}P^{s},\psi\rangle ds \xrightarrow {T\to \infty}\langle \mu _{*},\psi\rangle =v_{*}.\nonumber
\end{eqnarray}
On the other hand
\begin{eqnarray}
&&\frac{1}{T^2}\mathbb{E}v^2(T)=
\frac{1}{T^{2}}\,\mathbb{E}\Big(\int_{0}^{T}\!\!\psi(X_{t})dt\int_{0}^{T}\!\!\psi(X_{s})ds\Big) \nonumber\\
&&=\frac{2}{T^{2}}\!\!\int_{0}^{T}\!\!\int_{0}^{t}\!\,\mathbb{E}[\psi(X_{t})\psi(X_{s})]dt ds.\label{lab11}
\end{eqnarray}
The right hand side of   \eqref{lab11} equals
 \begin{eqnarray*}
&& \frac{2}{T^{2}}\int_{0}^{T}\!\!\int_{0}^{t}\!\,\mathbb{E}\big[\psi(X_{s})P^{t-s}\psi(X_{s})\big] dtds\\
 &&=\frac{2}{T^{2}}\int_{0}^{T}\!\!\int_{0}^{t}\! \langle\mu_0P^{s},\psi P^{t-s}\psi\rangle dt ds.
 \end{eqnarray*}
We claim that for any $\varepsilon>0$ there exists $T_0$ such that for all $T\ge T_0$ we have
\begin{equation}
\Big|\frac{2}{T^{2}}\int_{0}^{T}\!\!\int_{0}^{t}\!\langle \mu_0 P^{s},\psi (P^{t-s}\psi-\upsilon_{*})\rangle dtds\Big|<\varepsilon.
\label{lab4}
\end{equation}
Accepting this claim (proved  below) for a moment we conclude that 
 \begin{eqnarray*}
&&\lim\limits_{T\rightarrow \infty}\mathbb{E}\Big[\frac{v(T)}{T}\Big]^{2}=\lim\limits_{T\rightarrow \infty}\frac{2}{T^{2}}\upsilon_{*}\int_{0}^{T}\!\!t\, dt\left[
\frac{1}{t}\int_{0}^{t}\langle\mu_{0}P^{s},\psi\rangle d s \right]
=\upsilon_{*}^{2}.
 \end{eqnarray*}
The last equality follows from \eqref{011712}.

\subsection*{Proof of (\ref{lab4})}

We shall need the following two lemmas:
\begin{lemma}
\label{lma2}
Suppose that $\psi\in {\rm Lip}(E)\cap C_b(E)$. Then, for any $\varepsilon >0$ and a compact subset $K\subset E$ there exists $T_{0}$ such that for any  $T\geq T_{0}$
\begin{equation}\sup \limits _{x\in
K}\Big|\frac{1}{T}\int_{0}^{T}P^{s}\psi(x)ds-\upsilon_{*}\Big|<\varepsilon.\label{lab6}\end{equation}
\end{lemma}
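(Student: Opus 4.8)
The plan is to prove the uniform ergodic averaging estimate \eqref{lab6} by combining the pointwise convergence of the Ces\`aro averages (which follows from Proposition \ref{contract}) with an equicontinuity argument on the compact set $K$. First I would observe that for a fixed $x\in E$ we have
\[
\frac1T\int_0^T P^s\psi(x)\,ds=\langle \delta_x Q_T,\psi\rangle,
\]
and since $\|\psi\|_L\le 1$ (after normalising; the general Lipschitz case reduces to this up to a multiplicative constant, and the bounded part only helps), \eqref{aver-cont} of Proposition \ref{contract} gives
\[
\Bigl|\frac1T\int_0^T P^s\psi(x)\,ds-\upsilon_*\Bigr|=\bigl|\langle \delta_x Q_T,\psi\rangle-\langle\mu_*,\psi\rangle\bigr|\le d_1(\delta_x Q_T,\mu_*)\le\frac{\hat c}{T\gamma}\bigl(1-e^{-\gamma T}\bigr)\,d_1(\delta_x,\mu_*).
\]
By Lemma \ref{bound}, $d_1(\delta_x,\mu_*)\le \langle\delta_x,\rho_{x_0}\rangle+\langle\mu_*,\rho_{x_0}\rangle\le \rho_{x_0}(x)+\langle\mu_*,\rho_{x_0}\rangle$, which is bounded on the compact set $K$ by some constant $C_K$. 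Hence the left-hand side of \eqref{lab6} is bounded by $\hat c C_K/(T\gamma)$ uniformly in $x\in K$, and choosing $T_0:=\hat c C_K/(\varepsilon\gamma)$ finishes the argument.

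Actually, I realize the above already gives a uniform-over-$E$-bounded-sets estimate without needing any equicontinuity or compactness beyond boundedness of $\rho_{x_0}$ on $K$; the compactness hypothesis in the statement is not truly needed here, only boundedness of $\rho_{x_0}$ on $K$. So the cleanest route is: (i) normalise $\|\psi\|_L\le1$; (ii) rewrite the time-average as a pairing against $\delta_x Q_T$; (iii) apply \eqref{aver-cont}; (iv) control $d_1(\delta_x,\mu_*)$ on $K$ via Lemma \ref{bound}. Each step is routine given the results already established in the excerpt.

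I do not anticipate a serious obstacle. The only minor point to handle carefully is the normalisation: if $\|\psi\|_L=0$ then $\psi$ is constant, $\psi\equiv\upsilon_*$, and \eqref{lab6} is trivial; otherwise replace $\psi$ by $\psi/\|\psi\|_L$ and absorb the factor $\|\psi\|_L$ into the final constant, enlarging $T_0$ accordingly. One should also note that $Q_T$ here denotes the continuous-time Ces\`aro average $\mu Q_T=T^{-1}\int_0^T\mu P^s\,ds$ introduced just before Proposition \ref{contract}, so that $\langle\delta_x Q_T,\psi\rangle=T^{-1}\int_0^T P^s\psi(x)\,ds$ by the duality $\langle\mu P^s,\psi\rangle=\langle\mu,P^s\psi\rangle$ (valid for $\psi\in C_{lin}(E)$ by the truncation argument recorded after Lemma \ref{bound}). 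With these remarks in place the proof is a two-line application of Proposition \ref{contract} and Lemma \ref{bound}.
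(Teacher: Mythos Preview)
Your argument is correct and in fact more direct than the paper's own proof. The paper proceeds via soft compactness: it first shows that the family $\{P^{s}\psi,\ s\ge0\}$ is equicontinuous (using the contraction estimate \eqref{lab}) and uniformly bounded by $\|\psi\|_\infty$, hence so are the time averages $\psi_T$, and then invokes the Arzel\`a--Ascoli theorem to upgrade the pointwise convergence $\psi_T(x)\to\upsilon_*$ (which follows from \eqref{011712} with $\mu_0=\delta_x$) to uniform convergence on compact sets. By contrast, you read off the uniform estimate directly from the quantitative bound \eqref{aver-cont} in Proposition~\ref{contract}, together with the elementary inequality $d_1(\delta_x,\mu_*)\le \rho_{x_0}(x)+\langle\mu_*,\rho_{x_0}\rangle$. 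Your route gives an explicit rate of decay $O(1/T)$ and, as you observe, only requires $K$ to be $\rho_{x_0}$-bounded rather than compact; the paper's Arzel\`a--Ascoli argument genuinely needs compactness but does not require the full strength of the exponential contraction in $d_1$---equicontinuity plus pointwise convergence would suffice.

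One small remark: the appeal to Lemma~\ref{bound} is not quite the right citation for bounding $d_1(\delta_x,\mu_*)$; that inequality is a one-line coupling/triangle argument using $\mu_*\in\mathcal P_1$, not the content of Lemma~\ref{bound}. This does not affect the validity of the proof.
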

{\em Proof.}
Note that $\{P^{s}\psi, {s\geq 0}\}$ forms  an equicontinuous and uniformly bounded family of functions. Indeed, from condition  \eqref{lab} we have
 \begin{eqnarray*}
&&\Big|P^{s}\psi(x_{1})-P^{s}\psi(x_{2})\Big|=\Big|\langle \delta_{x_{1}}P^{s},\psi\rangle-\langle \delta_{x_{2}}P^{s},\psi\rangle\Big|\\
&&\leq
d_{1}(\delta_{x_{1}}P^{s},\delta_{x_{2}}P^{s})\|\psi\|_{L}\leq \hat c\,e^{-\gamma s}d_{1}(\delta_{x_{1}},\delta_{x_{2}})\|\psi\|_{L}\\
&&
\leq
\hat c\,e^{-\gamma s}\rho(x_{1},x_{2})\|\psi\|_{L}.
 \end{eqnarray*}
 for  all $x_1,x_2$, $s\ge0$. A uniform bound on the family is provided by $\|\psi\|_{\infty}$.
On the other hand,
 $$
 \psi_T(x):=\frac{1}{T}\int_{0}^{T}P^{s}\psi(x)ds,\quad T\ge 1
 $$ 
  is equicontinuous and uniformly bounded, so from the Arzela-Ascoli theorem, see  Theorem IV.6.7 of \cite{d-s}, we conclude that it is compact in the uniform topology on compact sets, as $T\to+\infty$. The lemma is a consequence of \eqref{011712} applied for $\mu_0=\delta_x$.
\qed

\begin{lemma}
\label{lma1}
For any $\varepsilon >0$ there exists a compact set $K$ and $T_0>0$ such that 
\begin{equation}\frac{1}{T}\int_{0}^{T}\!\!\mu_{0} P^{t}(K^{c})\,dt<\varepsilon, \quad \forall\,   T\geq
T_0.\label{lab10}\end{equation}
\end{lemma}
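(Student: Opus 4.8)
The plan is to establish a \emph{uniform-in-time tightness} statement for the family of measures $\{\mu_0 P^t,\ t\ge0\}$ and then pass to the Ces\`aro average. The key input is Lemma~\ref{bound}, which gives $\sup_{t\ge0}\langle \mu_0 P^t,\rho_{x_0}\rangle<\infty$: applying \eqref{042712} with the initial law $\mu_0\in\mathcal P_1$ (integrating against $\mu_0(dx)$ and using $\int\rho_{x_0}\,d\mu_0<\infty$) yields a constant $M:=\sup_{t\ge0}\langle\mu_0 P^t,\rho_{x_0}\rangle<\infty$. This is a moment bound, which on a general Polish space does not by itself give tightness, so the argument must be done in two stages.

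First I would reduce to compactness using the process itself. By hypothesis H0) the semigroup is stochastically continuous, so for each fixed $s>0$ and each $x$, $\delta_x P^s$ is a genuine Borel probability measure on the Polish space $E$, hence tight; more to the point, I would fix a small $s_0>0$ and write $\mu_0 P^t = (\mu_0 P^{t-s_0})P^{s_0}$ for $t\ge s_0$. The idea is that a single transition step $P^{s_0}$ should ``compactify'' a tight-in-moment family: given $\ep>0$, first choose $R$ large (via the uniform moment bound $M$ and Chebyshev) so that $\mu_0 P^{t-s_0}(B_R(x_0)^c)<\ep$ uniformly in $t\ge s_0$; then on the ball $B_R(x_0)$, which is separable, use that $\{\delta_x P^{s_0}:x\in \overline{B_R(x_0)}\}$ is a tight family — this follows because $x\mapsto \delta_x P^{s_0}$ is continuous from $E$ into $(\mathcal P_1,d_1)$ (indeed Feller plus the $d_1$-contraction of Lemma~\ref{linear}/H1) gives continuity in the weak and in the $d_1$ topology), so its image of a set with uniformly bounded $\rho_{x_0}$-moment lies in a $d_1$-bounded set, and a $d_1$-bounded subset of $\mathcal P_1$ with uniformly bounded $(1+\ep')$-moment is tight. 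Then a compact $K_1$ capturing all of $\{\delta_x P^{s_0}:x\in\overline{B_R(x_0)}\}$ up to $\ep$ combines with the ``good event'' $\{X_{t-s_0}\in B_R(x_0)\}$ to give $\mu_0 P^t(K_1^c)<2\ep$ for all $t\ge s_0$. On the residual interval $[0,s_0]$ one separately invokes tightness of each $\mu_0 P^t$ plus, say, right-continuity, or simply absorbs that interval into the time average since it contributes $O(s_0/T)\to0$; either way one gets a single compact $K$ with $\sup_{t\ge s_0}\mu_0 P^t(K^c)<2\ep$.

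Finally, with such a $K$ in hand, $\tfrac1T\int_0^T \mu_0P^t(K^c)\,dt\le \tfrac{s_0}{T}+\tfrac1T\int_{s_0}^T \mu_0P^t(K^c)\,dt\le \tfrac{s_0}{T}+2\ep$, and choosing $T_0$ so that $s_0/T_0<\ep$ gives the claimed bound $<3\ep$ for all $T\ge T_0$; renaming $\ep$ completes the proof.

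\medskip

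\noindent\textbf{Alternative (cleaner) route.} Rather than the single-step compactification, one can avoid it entirely by working with the averaged measures directly. Set $\nu_T:=\mu_0 Q_T = \tfrac1T\int_0^T \mu_0 P^t\,dt$. Then $\langle\nu_T,\rho_{x_0}\rangle\le M$ uniformly in $T$ by Lemma~\ref{bound}, and by \eqref{aver-cont} of Proposition~\ref{contract}, $d_1(\nu_T,\mu_*)\to0$, so $\nu_T\Rightarrow\mu_*$ in the weak topology. A weakly convergent sequence of Borel probability measures on a Polish space is tight (Prokhorov), so there is a compact $K$ with $\sup_T \nu_T(K^c)<\ep$, i.e.\ $\tfrac1T\int_0^T\mu_0 P^t(K^c)\,dt<\ep$ for $T$ large (and, after enlarging $K$ to also handle finitely many small $T$, for all $T\ge T_0$). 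This is essentially immediate given the machinery already developed.

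\medskip

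\noindent\textbf{Main obstacle.} The delicate point in the first route is that a uniform moment bound does \emph{not} imply tightness on a general Polish space — one genuinely needs the regularizing effect of one transition step $P^{s_0}$ together with the $d_1$-continuity of $x\mapsto\delta_xP^{s_0}$ and stochastic continuity H0) to pass from ``bounded moments'' to ``relatively compact in $d_1$, hence tight''. In the alternative route this difficulty disappears because weak convergence of $\nu_T$ to $\mu_*$ already hands us tightness via Prokhorov's theorem; the only care needed is the bookkeeping to produce a \emph{single} compact $K$ valid for all $T\ge T_0$ rather than just in the limit, which is routine once $\nu_T\Rightarrow\mu_*$ is known.
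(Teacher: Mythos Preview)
Your \emph{alternative route} is correct and is precisely what the paper does, only even more tersely: the paper's entire proof is the observation that H1) gives $d_1(\mu_0 P^t,\mu_*)\to0$, hence $\mu_0 P^t\Rightarrow\mu_*$ weakly, hence $\{\mu_0 P^t\}$ is tight by Prokhorov, and therefore so are the time averages. Your version applies Prokhorov to $\nu_T=\mu_0 Q_T$ instead of to $\mu_0 P^t$; either way the argument is one line once Proposition~\ref{contract} is in hand.

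Your \emph{first route}, however, contains a genuine gap. You claim that $\{\delta_x P^{s_0}:x\in\overline{B_R(x_0)}\}$ is tight because it is ``a $d_1$-bounded subset of $\mathcal P_1$ with uniformly bounded $(1+\ep')$-moment''. This is false on a general Polish space: $d_1$-boundedness (equivalently, a uniform first-moment bound) together with a uniform higher-moment bound still does not force tightness when $E$ is not locally compact. For a concrete counterexample, take $E$ an infinite-dimensional separable Hilbert space and consider the family $\{\delta_{e_n}\}_{n\ge1}$ of point masses at an orthonormal sequence: all pairwise $d_1$-distances equal $\sqrt{2}$, all moments of all orders are uniformly bounded, yet no compact set captures more than finitely many of the $e_n$. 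The characterization you seem to be reaching for is that \emph{$d_1$-relative compactness} is equivalent to tightness plus uniform integrability of first moments, but $d_1$-bounded is strictly weaker than $d_1$-relatively compact. The ``single-step regularization'' $P^{s_0}$ does not rescue this, because nothing in H0)--H1) forces $P^{s_0}(x,\cdot)$ to concentrate on compacts uniformly over $x$ in a bounded ball.

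Since you already have the clean argument, simply drop the first route.
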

{\em Proof.}
Condition H1) implies tightness of $\mu_{0}P^{t}$ as $t\to+\infty$. This of course implies tightness of the ergodic averages.
\qed

Choose an arbitrary $\varepsilon>0$,  compact set $K$ and $T_0$ as in Lemma \ref{lma1}. Then find $T_0^*$ as in Lemma \ref{lma2} for given $\varepsilon>0$ and compact set $K$. The left hand side of   \eqref{lab4} can be estimated by 
\begin{eqnarray*}
&&
\Big|\frac{2}{T^{2}}\int_{0}^{T}\!\!\int_{0}^{t}\!\langle \mu_0 P^{s},1_{K}\psi (P^{t-s}\psi-\upsilon_{*})\rangle dtds\Big|
\\
&&+\Big|\frac{2}{T^{2}}\int_{0}^{T}\!\!\int_{0}^{t}\!\langle \mu_0 P^{s},1_{K^c}\psi (P^{t-s}\psi-\upsilon_{*})\rangle dtds\Big|.
\end{eqnarray*}
Denote the terms of the above sum by $I_T$ and $I\!I_T$ respectively.
Since  $P^{s}$ is contractive on $B_b(E)$ from \eqref{lab6} we conclude
\begin{eqnarray*}
&&I_T=
\Big|\frac{2}{T^{2}}\int_{0}^{T}\!(T-s)\!\!\left\langle \mu_0 P^{s},\left[\frac{1}{T-s}\int_{0}^{T-s} (P^{t}\psi-\upsilon_{*})dt\right]\psi1_{K}\right\rangle ds\Big|
\end{eqnarray*}
changing variables $s:=T-s$ we can write
\begin{eqnarray*}
&&I_T=
\Big|\frac{2}{T^{2}}\int_{0}^{T_0^*}\!s\left\langle \mu_0 P^{s},\left[\frac{1}{s}\int_{0}^{s} (P^{t}\psi-\upsilon_{*})dt\right]\psi1_{K}\right\rangle ds\Big|\\
&&
+\Big|\frac{2}{T^{2}}\int_{T_0^*}^{T}\!s\left\langle \mu_0 P^{s},\left[\frac{1}{s}\int_{0}^{s} (P^{t}\psi-\upsilon_{*})dt\right]\psi1_{K}\right\rangle ds\Big|\\
&&
\le 2\|\psi\|_{\infty}^2\left(\frac{T_0^*}{T}\right)^2+
\frac{2\varepsilon}{T^{2}}\|\psi\|_{\infty}\int_{0}^{T}s\,ds=2\|\psi\|_{\infty}^2\left(\frac{T_0}{T}\right)^2+\varepsilon \|\psi\|_{\infty}.
\end{eqnarray*}
Hence 
$$
\limsup_{T\to+\infty}I_T\le \varepsilon \|\psi\|_{\infty}.
$$
On the other hand, from  \eqref{lab10} we conclude that 
\begin{eqnarray*}
&&I\!I_T\leq
\frac{2\|\psi\|_{\infty}^{2}}{T^{2}}\int_{0}^{T}tdt\,\left[\frac{1}{t}\int_{0}^{t}\mu_{0} P^{s}(K^{c})\,ds\right].
\end{eqnarray*}
Using Lemma \ref{lma1} we obtain that 
\begin{eqnarray*}
&&I\!I_T\leq  2\|\psi\|_{\infty}^2\left(\frac{T_0}{T}\right)^2+\varepsilon\|\psi\|_{\infty}^{2}.
\end{eqnarray*}
Thus also 
$$
\limsup_{T\to+\infty}I\!I_T\le \varepsilon \|\psi\|_{\infty}^2.
$$
Since $\varepsilon>0$ can be arbitrary we conclude \eqref{lab4}, thus obtaining \eqref{spwl} for $\psi$ Lipschitz and bounded.

Now we remove the restriction of  boundedness of the observable $\psi$. Let $L>1$ be arbitrary.
 Recall that $\psi_L$ is given by \eqref{012712}. Using the already proven part of the theorem
 we get
 $$
\lim_{T\to+\infty}\frac{1}{T}\int_{0}^{T}\psi_L(X_{s})ds= v_{*}^{(L)}=\langle \mu_*,\psi_L\rangle.
 $$
Let $\psi^{(L)}:=|\psi-\psi_L|$. Since $\mu_*\in{\cal P}_1(E)$ we have $\langle\mu_*,|\psi|\rangle<+\infty$. It is clear therefore that
 \begin{equation}
 \label{xxx-2}
\lim_{L\to+\infty} \langle \mu_*,\psi^{(L)}\rangle=0.
 \end{equation}
 \begin{lemma}
 \label{lm-x1} 
 We have
 $$
\lim_{L\to+\infty}\limsup_{t\to+\infty} \bbE\psi^{(L)}(X_t)=0.
 $$
 \end{lemma}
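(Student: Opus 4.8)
The plan is to deduce the lemma from the contraction estimate \eqref{cont}, the point being that the truncated functions $\psi^{(L)}$ satisfy a Lipschitz bound that is \emph{uniform} in $L$, so that $d_1$-convergence of $\mu_0P^t$ to $\mu_*$ does all the work.

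First I would rewrite $\psi^{(L)}=|\psi-\psi_L|$ as $\psi^{(L)}=g_L\circ\psi$, where $g_L(r):=(|r|-L)^+$ is $1$-Lipschitz on $\R$; hence $\psi^{(L)}\in{\rm Lip}(E)$ with $\|\psi^{(L)}\|_L\le\|\psi\|_L$ for every $L>1$, and $0\le\psi^{(L)}(x)\le|\psi(x)|$. Since ${\rm Lip}(E)\subset C_{lin}(E)$, the identity recorded after Lemma \ref{bound} gives $\bbE[\psi^{(L)}(X_t)\mid{\frak F}_0]=P^t\psi^{(L)}(X_0)$, so $\bbE\psi^{(L)}(X_t)=\langle\mu_0P^t,\psi^{(L)}\rangle$, which is finite because $\mu_0P^t\in{\cal P}_1$ by H1).

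Next, using that $\psi^{(L)}\in{\rm Lip}(E)$ and $\mu_0P^t,\mu_*\in{\cal P}_1$, the definition of the Wasserstein metric together with \eqref{cont} yields
$$
\big|\langle\mu_0P^t,\psi^{(L)}\rangle-\langle\mu_*,\psi^{(L)}\rangle\big|\le\|\psi^{(L)}\|_L\,d_1(\mu_0P^t,\mu_*)\le\|\psi\|_L\,\hat c\,e^{-\gamma t}\,d_1(\mu_0,\mu_*),
$$
with $d_1(\mu_0,\mu_*)<\infty$ since $\mu_0\in{\cal P}_1$. Letting $t\to\infty$ gives $\limsup_{t\to\infty}\bbE\psi^{(L)}(X_t)=\langle\mu_*,\psi^{(L)}\rangle$, and then letting $L\to\infty$ and invoking \eqref{xxx-2} (which holds because $\langle\mu_*,|\psi|\rangle<\infty$, by dominated convergence) completes the proof.

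The one step deserving care, and the crux of the argument, is the very first one: that composing with the clamping map does not increase the Lipschitz seminorm, so that $\sup_{L>1}\|\psi^{(L)}\|_L\le\|\psi\|_L$. It is precisely this uniformity that makes the bound above vanish as $t\to\infty$ independently of $L$; lacking it, one would instead have to control $\langle\mu_0P^t,|\psi|\mathbbm{1}_{\{|\psi|>L\}}\rangle$ by a tightness/moment argument based on H2)--H3) and Lemma \ref{bound}. Everything else is a routine application of the Wasserstein duality and the exponential contraction \eqref{cont}.
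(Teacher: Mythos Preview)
Your proof is correct and follows essentially the same route as the paper: both use that $\|\psi^{(L)}\|_L\le\|\psi\|_L$ uniformly in $L$ to bound $|\bbE\psi^{(L)}(X_t)-\langle\mu_*,\psi^{(L)}\rangle|$ by $\hat c\,e^{-\gamma t}\|\psi\|_L\,d_1(\mu_0,\mu_*)$ via H1), and then invoke \eqref{xxx-2}. You are in fact more explicit than the paper in justifying the uniform Lipschitz bound via $\psi^{(L)}=g_L\circ\psi$ with $g_L$ $1$-Lipschitz, which the paper's proof leaves implicit.
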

 \proof
By virtue of assumption H1) we can write
 \begin{eqnarray*}
&&
 \left|\bbE\psi^{(L)}(X_t)-\langle\mu_*,\psi^{(L)}\rangle\right|= \left|\langle\mu_0 P^t,\psi^{(L)}\rangle-\langle\mu_*P^t,\psi^{(L)}\rangle\right|\\
 &&
 \le 
 \hat ce^{-\ga t}d_1(\mu_0,\mu_*)\|\psi\|_{L}.
 \end{eqnarray*}
 The conclusion of the lemma follows then from the above and \eqref{xxx-2}.
 \qed
 
 From  the above lemma we conclude easily that
  $$
\lim_{L\to+\infty}\limsup_{T\to+\infty}\bbE\left|\frac{1}{T}\int_{0}^{T}[\psi(X_s)-\psi_L(X_s)]ds\right|=0,
 $$
which, thanks to \eqref{xxx-2},  yields \eqref{spwl}.

 \subsection{Corrector and its properties.}

With no loss of generality we may and shall assume that $v_{*}\coloneqq \langle\mu_{*},\psi\rangle=0,$ otherwise we would  consider $\psi\coloneqq\!\!\psi\!-v_{*}.$

\begin{lemma}\label{lab21}
Suppose that $\psi\in \mbox{\em Lip}(E)$. The functions 
\begin{equation}
\label{chi-t}
\chi_{t}\coloneqq\int_{0}^{t}P^{s}\psi ds
\end{equation}
  converge uniformly on bounded sets, as $t\rightarrow\infty.$
\end{lemma}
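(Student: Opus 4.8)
The plan is to show that the integrand $P^s\psi$ decays exponentially fast in $s$, uniformly over every bounded subset of $E$, and then to invoke the Cauchy criterion for uniform convergence. The one point that needs a small twist is that Lemma~\ref{linear} only controls the Lipschitz seminorm $\|P^s\psi\|_L$, whereas here one needs a decaying bound on the values $|P^s\psi(x)|$ themselves; this will be obtained by subtracting the $\mu_*$-average and using the invariance of $\mu_*$ together with the contraction H1).

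First I would record the pointwise estimate. Since $\mu_*$ is invariant and, by our normalization, $v_*=\langle\mu_*,\psi\rangle=0$, we have $\langle\mu_*P^s,\psi\rangle=0$ for every $s\ge0$. Hence, for all $x\in E$ and $s\ge0$, hypothesis H1) gives
\[
|P^s\psi(x)|=\big|\langle\delta_xP^s,\psi\rangle-\langle\mu_*P^s,\psi\rangle\big|\le \|\psi\|_L\,d_1(\delta_xP^s,\mu_*P^s)\le \hat c\,e^{-\gamma s}\|\psi\|_L\,d_1(\delta_x,\mu_*).
\]
Next I would bound $d_1(\delta_x,\mu_*)$: for any $f$ with $\|f\|_L\le1$ one has $|f(x)-\langle\mu_*,f\rangle|\le\langle\mu_*,\rho(x,\cdot)\rangle\le \rho_{x_0}(x)+\langle\mu_*,\rho_{x_0}\rangle$, and $m_*:=\langle\mu_*,\rho_{x_0}\rangle<\infty$ because $\mu_*\in\mathcal P_1(E)$ by part 1) of Theorem~\ref{lab3}. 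Thus
\[
\sup_{x\in B_R(x_0)}|P^s\psi(x)|\le \hat c\,e^{-\gamma s}\|\psi\|_L\,(R+m_*),\qquad \forall\,s\ge0,\ R>0.
\]

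It then remains only to note that, by Lemma~\ref{linear-1} the map $s\mapsto P^s\psi(x)$ is continuous (so each $\chi_t$ is a well-defined function, and is Lipschitz by Lemma~\ref{linear}), and that for $0<t<t'$,
\[
\sup_{x\in B_R(x_0)}|\chi_{t'}(x)-\chi_t(x)|\le \int_t^{t'}\sup_{x\in B_R(x_0)}|P^s\psi(x)|\,ds\le \frac{\hat c\,\|\psi\|_L(R+m_*)}{\gamma}\,e^{-\gamma t},
\]
which tends to $0$ as $t\to\infty$. Hence $\{\chi_t\}$ is uniformly Cauchy on $B_R(x_0)$ and, since $R$ is arbitrary, it converges uniformly on every bounded set; the limit $\chi:=\int_0^\infty P^s\psi\,ds$ moreover satisfies $\|\chi\|_L\le\int_0^\infty\|P^s\psi\|_L\,ds\le\hat c\|\psi\|_L/\gamma$, so $\chi\in{\rm Lip}(E)$. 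The only genuine obstacle is the first step — upgrading the seminorm bound of Lemma~\ref{linear} to the pointwise bound on $|P^s\psi(x)|$; everything after that is the standard Weierstrass argument for interchanging limit and integral.
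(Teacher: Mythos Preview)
Your proof is correct and follows essentially the same route as the paper: both subtract the invariant mean to write $P^s\psi(x)=\langle\delta_xP^s,\psi\rangle-\langle\mu_*P^s,\psi\rangle$, apply H1) to get the exponential bound $\hat c\,e^{-\gamma s}\|\psi\|_L\,d_1(\delta_x,\mu_*)$, and then integrate to obtain the uniform Cauchy property on bounded sets. Your version is slightly more explicit (you spell out the bound $d_1(\delta_x,\mu_*)\le R+m_*$ and cite Lemma~\ref{linear-1} for measurability of the integrand), and your closing remark that $\chi\in{\rm Lip}(E)$ with $\|\chi\|_L\le\hat c\|\psi\|_L/\gamma$ anticipates the paper's Lemma~\ref{lab38}.
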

{\em Proof.}
We show that   $\{\chi_{t}, t\geq 1\}$ satisfies Cauchy's condition on bounded subsets of $E$, as $t\to+\infty$.
Since $\langle \mu_{*}P^{s},\psi\rangle=0$ for all $s\ge 0$, for any  $u>t$ we have
\begin{eqnarray}
\label{lab18}
\Big|\int_{0}^{u}P^{s}\psi(x)\, ds -\int_{0}^{t}P^{s}\psi(x)\,ds\Big|
\leq\int_{t}^{u}\Big|\langle \delta_xP^{s},\psi\rangle-\langle \mu_{*}P^{s},\psi\rangle\Big|\, ds. 
\end{eqnarray}
Suppose that $\varepsilon>0$ is arbitrary. Using the definition of the metric $d_1$, the  right hand side of  \eqref{lab18} can be estimated by
\begin{eqnarray*}
&&\int_{t}^{u}\|\psi\|_{L} d_{1}(\delta_{x}P^{s},\mu_{*}P^{s})\,ds\leq
\hat c\|\psi\|_{L}d_{1}(\delta_{x},\mu_{*})\int_{t}^{u}\, e^{-\gamma s}\,ds\\
&&\leq \hat c\,\|\psi\|_{L} e^{-\gamma t}d_{1}(\delta_{x},\mu_{*})<\varepsilon,
\end{eqnarray*}
 provided that $u>t\ge t_0$ and $t_0$ is sufficiently large.$\Box$

The limit  
\begin{equation}
\label{chi}
\chi:=\lim_{t\to+\infty}\chi_{t}=\int_{0}^{\infty}P^{s}\psi\,ds
\end{equation} is called a {\em corrector}.

{\bf Remark.} This object is sometimes also referred to as the {\em potential}, as it formally solves the Poisson equation $-L\chi=\psi$, where $L$ is the generator of the semigroup $\{P^t,\,t\ge0\}$. We shall not use  this  equation explicitly in our paper, since we have not made an assumption that the semigroup is strongly continuous on the space of Lipschitz functions, so the generator is not defined in our case.

\begin{lemma}\label{lab38}
We have $\chi\in\mbox{\em Lip}(E)$.
In addition for any $T>s$
 \begin{eqnarray}
 \label{042312}
\mathbb{E}[\chi(X_{T})|\mathfrak{F}_{s}]=
\lim_{t\rightarrow +\infty}\mathbb{E}[\chi_{t}(X_{T})|\mathfrak{F}_{s}].
\end{eqnarray}
\end{lemma}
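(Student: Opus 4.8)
The plan is to establish the two claims of Lemma \ref{lab38} separately, both being essentially uniform-integrability statements built on the Lipschitz bound for $\chi$.

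\emph{Step 1: $\chi\in\mathrm{Lip}(E)$.} From Lemma \ref{linear} we have $\|P^s\psi\|_L\le\hat c e^{-\gamma s}\|\psi\|_L$, so for any $x,y\in E$ and any $t\ge0$,
\begin{equation*}
|\chi_t(x)-\chi_t(y)|\le\int_0^t\|P^s\psi\|_L\,\rho(x,y)\,ds\le\frac{\hat c}{\gamma}\|\psi\|_L\,\rho(x,y).
\end{equation*}
Passing to the limit $t\to+\infty$ (the pointwise limit exists by Lemma \ref{lab21}) gives $\|\chi\|_L\le\hat c\gamma^{-1}\|\psi\|_L<\infty$, so $\chi\in\mathrm{Lip}(E)$. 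In particular $\chi\in C_{lin}(E)$, so by Lemma \ref{bound} the standard truncation argument already invoked in the excerpt shows $\mathbb{E}[\chi(X_T)\,|\,\mathfrak{F}_s]=P^{T-s}\chi(X_s)$ is well defined and finite.

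\emph{Step 2: the interchange \eqref{042312}.} I would fix $T>s$ and show that $\chi_t(X_T)\to\chi(X_T)$ in $L^1(\bbP)$ as $t\to+\infty$, which immediately yields convergence of the conditional expectations in $L^1$ and hence \eqref{042312} (at least along a subsequence, which suffices since the limit is already identified; in fact $L^1$ convergence of the conditioned variables holds along the full net by the conditional Jensen / contraction inequality $\mathbb{E}|\mathbb{E}[\chi_t(X_T)|\mathfrak F_s]-\mathbb{E}[\chi(X_T)|\mathfrak F_s]|\le\mathbb{E}|\chi_t(X_T)-\chi(X_T)|$). To get the $L^1$ convergence, use the Cauchy estimate from the proof of Lemma \ref{lab21}: for $u>t$,
\begin{equation*}
|\chi_u(x)-\chi_t(x)|\le\hat c\,\|\psi\|_L\,e^{-\gamma t}\,d_1(\delta_x,\mu_*)\le\hat c\,\|\psi\|_L\,e^{-\gamma t}\big(\rho_{x_0}(x)+\langle\mu_*,\rho_{x_0}\rangle\big),
\end{equation*}
where I bounded $d_1(\delta_x,\mu_*)\le\langle\delta_x,\rho_{x_0}\rangle+\langle\mu_*,\rho_{x_0}\rangle$. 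Letting $u\to+\infty$ gives $|\chi(x)-\chi_t(x)|\le\hat c\|\psi\|_L e^{-\gamma t}(\rho_{x_0}(x)+\langle\mu_*,\rho_{x_0}\rangle)$. Therefore
\begin{equation*}
\mathbb{E}|\chi(X_T)-\chi_t(X_T)|\le\hat c\,\|\psi\|_L\,e^{-\gamma t}\big(\mathbb{E}\rho_{x_0}(X_T)+\langle\mu_*,\rho_{x_0}\rangle\big),
\end{equation*}
and $\mathbb{E}\rho_{x_0}(X_T)$ is finite and bounded in $T$ by hypothesis H3) (or directly by Lemma \ref{bound} together with $\mu_0\in\mathcal P_1$). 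Hence the right-hand side tends to $0$ as $t\to+\infty$, proving the $L^1$ convergence and with it \eqref{042312}.

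\emph{Main obstacle.} Everything here reduces to the explicit exponential decay estimate for $\chi-\chi_t$ that is already present in the proof of Lemma \ref{lab21}; the only genuine point requiring care is the passage to the limit inside the conditional expectation, i.e. producing a bound on $|\chi(x)-\chi_t(x)|$ that is integrable with respect to the law of $X_T$ \emph{uniformly} enough to dominate. This is handled by the linear-growth bound on $|\chi-\chi_t|$ combined with the uniform-in-time first-moment bound on $X_T$ from Lemma \ref{bound}/H3), so no martingale-convergence machinery beyond the contraction property of conditional expectation on $L^1$ is needed. I would write the argument to deliver $L^1$ convergence of $\chi_t(X_T)$ directly, which is cleaner than invoking dominated convergence for conditional expectations.
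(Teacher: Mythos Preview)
Your proof is correct and follows essentially the same route as the paper: both arguments rest on the linear-growth bound $|\chi_t(x)|\le C(1+\rho_{x_0}(x))$ (or equivalently your rate estimate $|\chi(x)-\chi_t(x)|\le Ce^{-\gamma t}(1+\rho_{x_0}(x))$) to pass to the limit under the conditional expectation. The only cosmetic difference is that the paper applies dominated convergence pointwise to $P^{T-s}\chi_t(x)\to P^{T-s}\chi(x)$ and then identifies the conditional expectation as $P^{T-s}\chi(X_s)$, yielding almost-sure convergence, whereas you obtain $L^1(\bbP)$ convergence of $\chi_t(X_T)$ with an explicit rate and invoke the $L^1$ contraction of conditional expectation; both are valid and rely on the same moment bound from Lemma~\ref{bound} (H3) is not needed here, as you correctly note parenthetically).
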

{\em Proof.}
Note that
\begin{eqnarray}
&&|\chi_{t}(x)-\chi_{t}(y)|= \left|\int_{0}^{t}P^{s}\psi(x)ds-\int_{0}^{t}P^{s}\psi(y)ds\right|\nonumber\\
&&\leq \int_{0}^{t}\left|\int\psi(z)\delta_{x}P^{s}(dz)-\int\psi(z)\delta_{y}P^{s}(dz)\right|ds.\label{lab19}
\end{eqnarray}
Similarly as in the proof of  Lemma \ref{lab21}, the right hand side of  \eqref{lab19} can be estimated by
\begin{eqnarray}
\label{012312}
&&\|\psi\|_{L} \int_{0}^{t}d_{1}(\delta_{x}P^{s},\delta_{y}P^{s})ds
\leq \hat c\|\psi\|_{L} d_{1}(\delta_{x},\delta_{y})\int_{0}^{t}e^{-\gamma s}ds\\
&&\leq C\|\psi\|_{L}\rho(x,y)(1-e^{-\gamma t})\nonumber
\end{eqnarray}
for some  $C>0$ independent of $t,x,y$.
Letting $t\to+\infty$ we get the first part of the lemma. 

Let us fix $x_0\in E$. From Lemma \ref{lab21} and \eqref{012312} it follows that there exists $C>0$ such that
\begin{eqnarray}
\label{022312}
&&|\chi_{t}(x)|\le C[1+\rho_{x_0}(x)],\quad\forall\,t>0,\,x\in E.
\end{eqnarray}
From H1)  and Lebesgue dominated convergence theorem  it follows that
\begin{eqnarray}
\label{032312}
\lim_{t\to+\infty}P^{T-s}\chi_t(x)= P^{T-s}\chi(x)\quad\forall\,x\in E.
\end{eqnarray}
Hence,
\begin{eqnarray*}
&&
\lim_{t\to+\infty}\mathbb{E}[\chi_t(X_{T})|\mathfrak{F}_{s}]=\lim_{t\to+\infty} P^{T-s}\chi_t(X_s) \\
&&
=P^{T-s}\chi(X_s)=\mathbb{E}[\chi(X_{T})|\mathfrak{F}_{s}]
\end{eqnarray*}
and \eqref{042312} follows.
$\Box$

\subsection{Proof of part 2)}
After a simple calculation we get
\begin{eqnarray*}
&&\frac{1}{T}\mathbb{E} \Big(\int_{0}^{T}\psi\,ds\Big)^{2}=\frac{2}{T}\int_{0}^{T}\!\! \left\langle \mu_0P^s, \psi \int_{0}^{T-s}P^{t}\psi\,dt\right\rangle ds.
\end{eqnarray*}
Note that integrals appearing on both sides of the above equality make sense in light of assumption H3) and the fact that $\psi \in C_{lin}(E)$.
Denoting the right hand side by $E(T)$ we can write that
\begin{eqnarray}
\label{012401}
&&\Big|E(T)-\frac{2}{T}\int_{0}^{T}\! \!\! \left\langle \mu_0 P^{s} ,\psi \chi\right\rangle ds\Big|\\
&&=\frac{2}{T}\left|\int_{0}^{T}\!\! \left\langle \mu_0P^s, \psi(\chi-\chi_{T-s})\right\rangle ds\right|,\nonumber
\end{eqnarray}
see \eqref{chi-t} and \eqref{chi} for the definitions of $\chi_t$ and $\chi$ respectively.
Using \eqref{lab19} we conclude that there exist $C>0$ and $x_0\in E$ such that
\begin{equation}
\label{052401}
\left|\psi(x)\chi(x)\right|+\left|\psi(x)\chi_u(x)\right|\le C\rho^2_{x_0}(x),\quad\forall \,x\in E, u>0.
\end{equation}
Choose an arbitrary $\epsilon>0$.
According to H3) we can find a sufficiently large $R>0$ such that  
\begin{eqnarray}
\label{022401}
&&\left| \left\langle \mu_0P^s, \psi(\chi-\chi_{T-s})1_{B_R^c(x_0)}\right\rangle\right|
\\
&&\le C\bbE\left[\rho^2(X_s,x_0),\rho_{x_0}(X_s)\ge R\right]<
 \frac{\epsilon}{2},\quad\forall\,0\le s\le T.\nonumber
\end{eqnarray}
On the other hand from Lemma \ref{lab21} we can choose $M>0$ large enough so that
\begin{equation}
\label{032401}
\left| \left\langle \mu_0P^s, \psi(\chi-\chi_{T-s})1_{B_R(x_0)}\right\rangle\right|\le
 \frac{\epsilon}{2},\quad\forall\, T-s>M.
\end{equation}
Combining \eqref{022401} with \eqref{032401} we  conclude that the right hand side of \eqref{012401} converges to $0$, as $T\to+\infty$. Since $ \mu_0P^s$ tends to $ \mu_*$, as $s\to+\infty$,  weakly in the sense of convergence of measures, we conclude from H3) and \eqref{052401} that  $\langle \mu_*,|\psi\chi|\rangle<+\infty$
and
\begin{eqnarray*}
&&\lim\limits_{T\to \infty}\frac{2}{T}\int_{0}^{T}\! \!\! \left\langle \mu_0 P^{s} ,\psi \chi\right\rangle ds=2\langle \mu_*,\psi\chi\rangle.
\end{eqnarray*}

\section{Proof of part 3) of Theorem \ref{lab3}}

 \label{sec5}

 \subsection{A central limit theorem for martingales}
Suppose that $\{{\frak F}_n,\,n\ge0\}$ is a filtration over $(\Omega,{\frak F},\bbP)$ such that ${\frak F}_0$ is trivial and $\{Z_n,\,n\ge1\}$ is a sequence of square integrable martingale differences, i.e. it is  $\{{\frak F}_n,\,n\ge1\}$  adapted, $\bbE Z_n^2<+\infty$  and $\bbE[Z_n\big|{\frak F}_{n-1}]=0$ for all $n\ge1$. Define also the martingale 
$$
M_N:=\sum_{j=1}^NZ_j, \quad N\ge1,\quad M_0:=0.
$$
  Its quadratic variation equals $\langle M\rangle_{N}:=\sum_{j=1}^N\bbE\left[Z_j^2\big|{\frak F}_{j-1}\right]$ for $N\ge1$. 
Assume also that:
\begin{itemize}
\item[M1)] \label{lab310} for every $\varepsilon >0,\,$\, 
$$
\lim_{N\to+\infty}\frac {1}{N}\sum_{j=0}^{N-1} \mathbb{E}\Big[ Z_{j+1}^2, \,
|Z_{j+1} | \ge \varepsilon\sqrt{N}  \Big]=0,
$$  
\item[M2)] we have
 \begin{equation}
 \label{sublinear}
 \sup_{n\ge1}\bbE Z_n^2<+\infty
 \end{equation} and 
 there exists $\sigma\ge0$ such that
    $$\lim_{K\rightarrow\infty}\limsup_{\ell\rightarrow\infty}\frac{1}{\ell}\sum_{m=1}^{\ell}\mathbb{E}\Big|\frac{1}{K}
    \bbE\left[\langle M\rangle_{mK}-\langle M\rangle_{(m-1)K}\Big|{\frak F}_{(m-1)K}\right] -\sigma^{2}\Big|=0$$
    and 
\item[M3)] for every $\varepsilon >0$
\begin{equation}
\label{m3}
\lim_{K\rightarrow\infty}\limsup_{\ell\rightarrow\infty}\frac{1}{\ell K}\sum_{m=1}^{\ell}\sum_{j=(m-1)K}^{mK-1}\mathbb{E}[1+Z_{j+1}^2,\, |M_{j}-M_{(m-1)K}|\geq\varepsilon\sqrt{\ell K}]=0.
\end{equation}
\end{itemize}
\begin{thm}\label{lab30b}
Under the assumptions made above we have 
\begin{equation}
\label{M2b}
\lim_{N\to+\infty}\frac{\mathbb{E}\langle M\rangle_{N}}{N}
    =\sigma^{2}
\end{equation} and
\begin{equation}
\label{052501}
\lim_{N\to\infty}  \, \mathbb{E}  e^{i \theta
  M_N/\sqrt N} = e^{-\sigma^2 \theta^2/2},\quad\forall\,\theta\in\bbR.
\end{equation}
\end{thm}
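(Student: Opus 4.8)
The plan is to deduce Theorem \ref{lab30b} from a classical martingale central limit theorem by a blocking argument, exactly in the spirit of Chapter 2 of \cite{klo}. First I would prove \eqref{M2b}: by M2) with $K$ fixed we have that $\ell^{-1}\sum_{m=1}^\ell \bbE\left[\langle M\rangle_{mK}-\langle M\rangle_{(m-1)K}\right]$ is close to $K\sigma^2$ for large $\ell$, since the $L^1$-convergence in M2) implies convergence of expectations; telescoping the sum gives $\bbE\langle M\rangle_{\ell K}/(\ell K)\to\sigma^2$ along multiples of $K$, and the uniform bound $\sup_n\bbE Z_n^2<+\infty$ from \eqref{sublinear} controls the remainder to extend the limit to all $N$. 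Note \eqref{M2b} also identifies $\sigma^2$ as the limit appearing in part 2) of Theorem \ref{lab3}, since $\bbE M_N^2=\bbE\langle M\rangle_N$.

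For the central limit theorem \eqref{052501} I would fix a large integer $K$, group the martingale differences $Z_j$ into consecutive blocks of length $K$, and set
$$
U_m^{(K)}:=M_{mK}-M_{(m-1)K}=\sum_{j=(m-1)K+1}^{mK}Z_j,\qquad m\ge1,
$$
so that $\{U_m^{(K)},\,m\ge1\}$ is a sequence of square integrable martingale differences with respect to the coarser filtration $\{{\frak F}_{mK},\,m\ge1\}$. Writing $N=\ell K$ (the general case is handled by absorbing the at most $K$ leftover terms, whose contribution to $M_N/\sqrt N$ vanishes in probability by \eqref{sublinear}), one has $M_N=\sum_{m=1}^\ell U_m^{(K)}$. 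The plan is to apply a standard triangular-array martingale CLT (e.g. the Brown--McLeish theorem, Theorem 2 of \cite{brown}, or Theorem 3.2 of Hall--Heyde) to the array $\{U_m^{(K)}/\sqrt{\ell K}\}_{m=1}^\ell$. This requires two things: (a) the conditional Lindeberg condition $\ell^{-1}K^{-1}\sum_{m=1}^\ell\bbE\big[(U_m^{(K)})^2,\,|U_m^{(K)}|\ge\varepsilon\sqrt{\ell K}\big]\to0$, and (b) convergence of the sum of conditional variances, $(\ell K)^{-1}\sum_{m=1}^\ell\bbE\big[(U_m^{(K)})^2\big|{\frak F}_{(m-1)K}\big]\to\sigma^2$ in probability (or in $L^1$).

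For (b), observe that $\bbE\big[(U_m^{(K)})^2\big|{\frak F}_{(m-1)K}\big]=\bbE\big[\langle M\rangle_{mK}-\langle M\rangle_{(m-1)K}\big|{\frak F}_{(m-1)K}\big]$, since the cross terms vanish by the martingale difference property and the tower rule. Hence hypothesis M2) says precisely that the $K^{-1}$-scaled version of this quantity, averaged over $m\le\ell$, converges to $\sigma^2$ in $L^1$ as first $\ell\to\infty$ and then $K\to\infty$; so for each target accuracy one first chooses $K$ large and then $\ell$ large, which is the usual order of limits in the blocking method and matches the structure of \eqref{M2b}. For (a), the increment $U_m^{(K)}$ is not itself small, but one can bound $(U_m^{(K)})^2\le K\sum_{j=(m-1)K+1}^{mK}Z_j^2$ by Cauchy--Schwarz, and on the event $\{|U_m^{(K)}|\ge\varepsilon\sqrt{\ell K}\}$ one has $|M_j-M_{(m-1)K}|\ge\varepsilon\sqrt{\ell K}/2$ for at least... more simply, the event forces some partial sum within the block to be large, so the Lindeberg-type sum is dominated by the quantity appearing in hypothesis M3) (this is exactly why M3) is phrased with the indicator $|M_j-M_{(m-1)K}|\ge\varepsilon\sqrt{\ell K}$ and the weight $1+Z_{j+1}^2$); thus M3) delivers (a). Assembling (a) and (b), the martingale CLT gives $M_{\ell K}/\sqrt{\ell K}\Rightarrow N(0,\sigma^2)$ along $N=\ell K$ for each fixed $K$, and since the limit law does not depend on $K$ and the leftover terms are negligible, a routine $\varepsilon/3$ argument upgrades this to the full convergence \eqref{052501} for all $N\to\infty$.

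The main obstacle I anticipate is the bookkeeping in step (a): making precise the claim that a large block increment $U_m^{(K)}$ (or, more to the point, a large value of some interior partial sum $M_j-M_{(m-1)K}$) is what triggers the Lindeberg truncation, so that the double sum in M3) genuinely dominates the conditional Lindeberg sum for $U_m^{(K)}$. One must be careful that the truncation level $\varepsilon\sqrt{\ell K}$ is the same on both sides and that the factor $K$ lost in the Cauchy--Schwarz step is compensated by the $1/(\ell K)$ normalization together with $\sup_n\bbE Z_n^2<+\infty$; handling the two limits ($\ell\to\infty$ then $K\to\infty$) in the correct order throughout is the delicate point, but it is precisely the situation treated in Chapter 2 of \cite{klo}, and the argument there adapts with only cosmetic changes.
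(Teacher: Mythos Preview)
Your plan for \eqref{M2b} is essentially the paper's argument and is fine. The difficulty is with \eqref{052501}.

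The core gap is the sentence ``the martingale CLT gives $M_{\ell K}/\sqrt{\ell K}\Rightarrow N(0,\sigma^2)$ along $N=\ell K$ for each fixed $K$.'' This is not justified by the hypotheses. For a \emph{fixed} $K$, neither the conditional-variance condition (b) nor the Lindeberg condition (a) of Brown's theorem is available: M2) only guarantees that $\limsup_{\ell\to\infty}\bbE\big|(\ell K)^{-1}\sum_m\bbE[(U_m^{(K)})^2|{\frak F}_{(m-1)K}]-\sigma^2\big|\le\varepsilon_K$ with $\varepsilon_K\to0$ as $K\to\infty$, not that the left side is zero; and M3) has the same double-limit structure. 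So for fixed $K$ you cannot invoke a black-box CLT and then ``upgrade'' by an $\varepsilon/3$ argument: there is no limit law at level $K$ to compare with. What one actually needs is a quantitative statement that the characteristic function $\bbE e^{i\theta M_{\ell K}/\sqrt{\ell K}}$ is within $o(1)+C\varepsilon_K$ of $e^{-\sigma^2\theta^2/2}$, and then let $K\to\infty$. But proving such a stability estimate amounts to reproving the martingale CLT with explicit error control---which is exactly what the paper does by expanding $e^{ia}=1+ia-a^2/2-R(a)a^2$, telescoping $e^{\theta^2\sigma^2 j/(2N)}\bbE e^{i\theta M_j/\sqrt N}$, and bounding the three resulting sums directly with M1), M2), M3). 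In that computation the blocking at scale $K$ appears only inside the estimate of the ``variance'' term, and the double limit $\limsup_\ell$ then $\lim_K$ arises naturally.

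A second, related issue is your derivation of (a) from M3). The Lindeberg quantity you need is $\sum_m\bbE[(U_m^{(K)})^2,\,|U_m^{(K)}|\ge\varepsilon\sqrt{\ell K}]$, where the truncation event depends on the \emph{endpoint} increment $M_{mK}-M_{(m-1)K}$, whereas M3) involves the events $\{|M_j-M_{(m-1)K}|\ge\varepsilon\sqrt{\ell K}\}$ for $j$ \emph{inside} the block, each paired with its own weight $1+Z_{j+1}^2$. The inclusion goes the wrong way for a direct domination (a large endpoint does not force every interior partial sum to be large), and the Cauchy--Schwarz bound $(U_m^{(K)})^2\le K\sum_j Z_{j+1}^2$ costs a factor $K$ which is not obviously absorbed. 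In the paper's direct approach the relevant error term is instead $\sum_m\sum_{j\in I_m}\bbE[(e_{j,N}-e_{(m-1)K,N})(\sigma^2-Z_{j+1}^2)]$, and the smallness of $e_{j,N}-e_{(m-1)K,N}$ on $\{|M_j-M_{(m-1)K}|<\varepsilon\sqrt{\ell K}\}$, together with M3) on the complement, is precisely what is needed---this is why M3) is phrased the way it is. Finally, note that M1) is used in the paper to control a separate remainder term (the one coming from $R(a)$) and is not subsumed by M3); your sketch makes no use of M1) for \eqref{052501}, which is another sign that the reduction to Brown's theorem is not straightforward.
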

The proof of this theorem is a modification of the argument contained in Chapter 2 of \cite{klo}. In order not to divert  reader's attention we postpone its presentation  till Appendix \ref{appA}.

\subsection{Martingale approximation and the proof of the central limit theorem}

We use the martingale technique of proving the central limit theorem for an additive functional of a  Markov process  and represent $\int_{0}^{T}\psi(X_{s})ds$ as a sum of a martingale and a ''small''  remainder term that vanishes, after dividing by $\sqrt{T}$, as $T\rightarrow\infty$. The theorem is then a consequence of an appropriate  central limit theorem for martingales, see Theorem \ref{lab30b} modeled after a theorem presented  in Section 2.1 of \cite{klo}. The proof of this result is presented in Appendix \ref{appA}.

\subsubsection{Reduction to the central limit theorem for  martingales}
\label{mart-decomp}
Note that
\begin{eqnarray}
\label{decomp}
&&\frac{1}{\sqrt{T}}\int_{0}^{T}\psi(X_s)\,ds
=\frac{1}{\sqrt{T}}M_{T}+R_{T}
\end{eqnarray}
where
\begin{eqnarray}
\label{mart}
M_{T}\coloneqq\chi(X_{T})-\chi(X_{0})+\int_{0}^{T}\psi(X_{s})\,ds.
\end{eqnarray}
and
$$R_{T}\coloneqq \frac{1}{\sqrt{T}}\left[\chi(X_{0})-\chi(X_{T})\right].$$

\begin{prop}\label{lab20}
Under the assumptions of Theorem $\ref{lab3}$ the process
$\{
M_{T},\,T\ge0\}
$ is a martingale with respect to the  filtration  $\{\mathfrak{F}_T, T\ge 0\}.$
\end{prop}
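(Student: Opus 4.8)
The plan is to verify the three defining properties of $\{M_{T},\,T\ge0\}$: $\mathfrak{F}_{T}$-measurability, integrability, and $\bbE[M_{T}\mid\mathfrak{F}_{s}]=M_{s}$ for $0\le s<T$. Measurability is immediate once one works (as is done implicitly throughout the paper, e.g. in the definition of $S_{T}$) with a measurable version of $\{X_{t}\}$, so that $\int_{0}^{T}\psi(X_{s})\,ds$ is $\mathfrak{F}_{T}$-measurable; combined with $\mathfrak{F}_{T}$-measurability of $\chi(X_{T})$ and $\chi(X_{0})$ this gives adaptedness. For integrability, Lemma \ref{lab38} gives $\chi\in\mathrm{Lip}(E)\subset C_{lin}(E)$, hence $|\chi(x)|\le C[1+\rho_{x_{0}}(x)]$, so by hypothesis H3) both $\bbE|\chi(X_{T})|$ and $\bbE|\chi(X_{0})|$ are finite, while the same linear growth applied to $\psi$ together with Fubini gives $\bbE\int_{0}^{T}|\psi(X_{s})|\,ds\le\int_{0}^{T}\bbE|\psi(X_{s})|\,ds<\infty$. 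Thus $M_{T}$ is integrable, and note $M_{0}=0$.

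For the martingale identity fix $0\le s<T$ and split
\[
\bbE[M_{T}-M_{s}\mid\mathfrak{F}_{s}]=\bbE[\chi(X_{T})-\chi(X_{s})\mid\mathfrak{F}_{s}]+\bbE\Big[\int_{s}^{T}\psi(X_{u})\,du\,\Big|\,\mathfrak{F}_{s}\Big].
\]
The first term is handled directly by Lemma \ref{lab38}: by \eqref{042312} and \eqref{032312}, $\bbE[\chi(X_{T})\mid\mathfrak{F}_{s}]=\lim_{t\to+\infty}P^{T-s}\chi_{t}(X_{s})=P^{T-s}\chi(X_{s})$. For the second term the integrability bound above licenses Fubini, and the Markov property in the form $\bbE[\psi(X_{u})\mid\mathfrak{F}_{s}]=P^{u-s}\psi(X_{s})$ (valid for $\psi\in C_{lin}(E)$, as recalled after Lemma \ref{bound}) yields
\[
\bbE\Big[\int_{s}^{T}\psi(X_{u})\,du\,\Big|\,\mathfrak{F}_{s}\Big]=\int_{s}^{T}P^{u-s}\psi(X_{s})\,du=\int_{0}^{T-s}P^{r}\psi(X_{s})\,dr=\chi_{T-s}(X_{s}).
\]
Therefore $\bbE[M_{T}-M_{s}\mid\mathfrak{F}_{s}]=P^{T-s}\chi(X_{s})+\chi_{T-s}(X_{s})-\chi(X_{s})$, and it remains to prove the deterministic identity $P^{t}\chi+\chi_{t}=\chi$ on $E$ for every $t\ge0$, applied with $t=T-s$.

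To see this, fix $t\ge0$ and $x\in E$. Since $\chi_{n}\in C_{lin}(E)$ with the uniform linear bound \eqref{022312}, and by Lemma \ref{bound} the function $y\mapsto\rho_{x_{0}}(y)$ is $\delta_{x}P^{t}$-integrable, Fubini together with the semigroup identity $P^{t}P^{r}=P^{t+r}$ gives, for each finite $n$,
\[
P^{t}\chi_{n}(x)=\int_{0}^{n}P^{t}P^{r}\psi(x)\,dr=\int_{0}^{n}P^{t+r}\psi(x)\,dr=\int_{t}^{t+n}P^{u}\psi(x)\,du.
\]
Letting $n\to+\infty$: the left side converges to $P^{t}\chi(x)$ by \eqref{032312} (dominated convergence using \eqref{022312} and Lemma \ref{bound}), while the right side converges to $\int_{t}^{\infty}P^{u}\psi(x)\,du=\chi(x)-\chi_{t}(x)$ by Lemma \ref{lab21} and \eqref{chi}. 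Hence $P^{t}\chi+\chi_{t}=\chi$, so $\bbE[M_{T}-M_{s}\mid\mathfrak{F}_{s}]=0$, which proves the proposition.

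The algebraic heart of the computation is already contained in Lemmas \ref{lab21} and \ref{lab38}; the only genuinely analytic point is the justification of interchanging the operator $P^{t}$ with the improper integral defining the corrector $\chi$ — equivalently, establishing $P^{t}\chi=\int_{t}^{\infty}P^{u}\psi\,du$ by dominated convergence — which is exactly where the Lyapunov bound H3) and the linear-growth control of Lemma \ref{bound} enter. I expect that step to be the main (though essentially routine) obstacle, with everything else being measurability and integrability bookkeeping.
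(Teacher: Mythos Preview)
Your proof is correct and follows essentially the same route as the paper's: both reduce the martingale identity to the algebraic relation $P^{T-s}\chi+\chi_{T-s}=\chi$ (the paper writes this as $\int_s^T P^{u-s}\psi(X_s)\,du=\chi(X_s)-\bbE[\chi(X_T)\mid\mathfrak{F}_s]$, which is the same thing). If anything, you are more careful than the paper in justifying the interchange of $P^t$ with the improper integral defining $\chi$ via dominated convergence, a step the paper leaves implicit.
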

{\em Proof.}
From H3) it follows that $\mathbb{E}|M_{T}|<\infty.$ We have
\begin{eqnarray*}
&&\mathbb{E}[M_{T}|\mathfrak{F}_{s}]
=\mathbb{E}[\chi(X_{T})|\mathfrak{F}_{s}]-\chi(X_{0})+
\int_{0}^{s}\mathbb{E}[\psi (X_{u})|\mathfrak{F}_{s}]du\\
&&+\int_{s}^{T}\mathbb{E}[\psi (X_{u})|\mathfrak{F}_{s}]du.
\end{eqnarray*}
The last term on the right hand side equals
\begin{eqnarray*}
&&
\int_{s}^{+\infty}P^{u-s}\psi (X_{s})du-\int_{T}^{+\infty}P^{u-T}(P^{T-s}\psi) (X_{s})du
\\
&&
=
\chi(X_{s})-\mathbb{E}[\chi(X_{T})|\mathfrak{F}_{s}].
\end{eqnarray*}
This  ends the proof of the martingale property. 
\qed

\begin{lemma}\label{lab12}
The random variables $R_{T}$ converge to $0$, as $T\to+\infty$, in the $L^{1}$-sense.
\end{lemma}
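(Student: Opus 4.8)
The plan is to show that $\mathbb{E}|R_T|\to 0$ by proving that $\sup_{T\ge 0}\mathbb{E}|\chi(X_T)|<+\infty$; since $R_T=T^{-1/2}[\chi(X_0)-\chi(X_T)]$ and $\mathbb{E}|\chi(X_0)|<\infty$ (because $\mu_0\in\mathcal{P}_1$ and $\chi\in\mathrm{Lip}(E)$), this immediately gives $\mathbb{E}|R_T|\le T^{-1/2}(\mathbb{E}|\chi(X_0)|+\sup_T\mathbb{E}|\chi(X_T)|)\to 0$.

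The key estimate is the linear growth bound on $\chi$. By Lemma \ref{lab38} (equivalently \eqref{022312} applied in the limit $t\to\infty$, or simply $\chi\in\mathrm{Lip}(E)$) there exist $C>0$ and $x_0\in E$ with $|\chi(x)|\le C(1+\rho_{x_0}(x))$ for all $x\in E$. Hence
\begin{equation*}
\mathbb{E}|\chi(X_T)|\le C\big(1+\mathbb{E}\rho_{x_0}(X_T)\big).
\end{equation*}
First I would control $\mathbb{E}\rho_{x_0}(X_T)$ uniformly in $T$. One route is via Jensen's inequality together with hypothesis H3): $\mathbb{E}\rho_{x_0}(X_T)\le\big(\mathbb{E}\rho_{x_0}^{2+\delta}(X_T)\big)^{1/(2+\delta)}\le A_*^{1/(2+\delta)}$, where $A_*=\sup_{t\ge 0}\mathbb{E}\rho_{x_0}^{2+\delta}(X_t)<\infty$ by \eqref{lab29}. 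Alternatively, Lemma \ref{bound} gives $\sup_{t\ge 0}\langle\delta_xP^t,\rho_{x_0}\rangle\le C[\rho_{x_0}(x)+1]$, and integrating against $\mu_0\in\mathcal{P}_1$ yields $\sup_{t\ge 0}\mathbb{E}\rho_{x_0}(X_t)=\sup_{t\ge 0}\langle\mu_0P^t,\rho_{x_0}\rangle<+\infty$ as well. Either way, $\sup_{T\ge 0}\mathbb{E}|\chi(X_T)|<+\infty$.

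Putting the pieces together: for $T\ge 1$,
\begin{equation*}
\mathbb{E}|R_T|=\frac{1}{\sqrt{T}}\,\mathbb{E}|\chi(X_0)-\chi(X_T)|\le\frac{1}{\sqrt{T}}\Big(\mathbb{E}|\chi(X_0)|+\sup_{t\ge 0}\mathbb{E}|\chi(X_t)|\Big)\xrightarrow[T\to+\infty]{}0,
\end{equation*}
which is the claim. There is no real obstacle here; the only point requiring the standing hypotheses is the uniform-in-time integrability of $\rho_{x_0}(X_T)$, which is exactly what H3) (via Lemma \ref{bound}, or directly via Jensen) is designed to supply. I would present the argument in that order: invoke $\chi\in\mathrm{Lip}(E)\subset C_{lin}(E)$ to get the linear bound, invoke H3)/Lemma \ref{bound} for the uniform moment bound, and then divide by $\sqrt{T}$.
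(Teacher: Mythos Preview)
Your proof is correct and follows essentially the same idea as the paper: bound $\mathbb{E}|\chi(X_T)|$ uniformly in $T$ and divide by $\sqrt{T}$. The paper does this by applying the Wasserstein contraction \eqref{lab} directly to the Lipschitz function $|\chi|$, writing $\langle\mu_0 P^T,|\chi|\rangle\le\|\chi\|_L\,d_1(\mu_0P^T,\mu_*P^T)+\|\chi\|_{L^1(\mu_*)}$; your route via Lemma \ref{bound} is the same estimate applied to $\rho_{x_0}$ instead of $|\chi|$, while your alternative via H3) and Jensen is a slightly more direct shortcut that avoids invoking H1) at this particular step.
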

{\em Proof.}
Since $\mathbb{E}|\chi(X_{0})|<+\infty$ we conclude that
\begin{eqnarray*}
&&\frac{1}{\sqrt{T}}\mathbb{E}|\chi(X_{0})|\xrightarrow {T\to \infty}0
\end{eqnarray*}
On the other hand
\begin{eqnarray}
&&\frac{1}{\sqrt{T}}\mathbb{E}|\chi(X_{T})|
=\frac{1}{\sqrt{T}}\langle\mu_0 P^{T},|\chi|\rangle. \label{lab36}
\end{eqnarray}
Since  $\mu_{*}P^{T}=\mu_{*}$ we can rewrite the right hand side of  \eqref{lab36} as being equal to
\begin{eqnarray}
&&\frac{1}{\sqrt{T}}\big[\langle\mu_0 P^{T},|\chi|\rangle\!-\!\!\langle\mu_* P^{T},|\chi|\rangle\big]
\!+\!\frac{1}{\sqrt{T}}\|\chi\|_{L^1(\mu_{*})}\label{lab37}\\
&&\le
\frac{1}{\sqrt{T}}\|\chi\|_{L}d_{1}(\mu_{0} P^{T},\mu_{*}P^{T})+\frac{1}{\sqrt{T}}\|\chi\|_{L^1(\mu_{*})}\nonumber\\&&\stackrel{\eqref{lab}}{\leq}
\frac{\hat c}{\sqrt{T}}\|\chi\|_{L} e^{-\gamma T} d_{1}(\mu_{0} ,\mu_{*})+\frac{1}{\sqrt{T}}\|\chi\|_{L^1(\mu_{*})}\xrightarrow {T\to \infty}0.\nonumber\Box
\end{eqnarray}

\subsubsection{Verification of the assumptions of Theorem \ref{lab30b}}

We assume that all the constants appearing in  ensuing estimations and designated by the letter $C$ are strictly positive and do not depend on $N, K, \ell.$

 We verify the assumptions of Theorem \ref{lab30b}   for the martingale defined in \eqref{mart} and $Z_n:=M_n-M_{n-1}$ for $n
\ge1$.
Then, part 3) of Theorem \ref{lab3} follows thanks to decomposition \eqref{decomp}, Lemma \ref{lab12} and the fact that for any $\varepsilon>0$
\begin{equation}
\label{vanish}
\lim_{N\rightarrow\infty}\mathbb{P}[\sup_{T\in[N,N+1)}|M_T/\sqrt{T}-M_N/\sqrt{N}|\geq\varepsilon]=0.
\end{equation}
To see equality \eqref{vanish} note that  the probability under the limit is less than or equal to  
\begin{eqnarray*}
&&\mathbb{P}[\sup_{T\in[N,N+1)}|M_T-M_N|\ge \varepsilon\sqrt{N}/2]\\
&&
+\mathbb{P}\left[|M_N|\left[\frac{1}{N^{1/2}}-\frac{1}{(N+1)^{1/2}}\right]\ge \varepsilon/2\right]\\
&&
\le \frac{C}{N\varepsilon^2}\bbE[\langle M\rangle_{N+1}-\langle M\rangle_{N}]+\frac{C}{N^3\varepsilon^2}\bbE[\langle M\rangle_{N}].
\end{eqnarray*}
The last inequality follows from Doob and Chebyshev estimates and an elementary inequality $N^{-1/2}-(N+1)^{-1/2}\le CN^{-3/2}$ that holds for all $N\ge1$ and some constant $C>0$. The first term on the right hand side vanishes as $N\to+\infty$, thanks to \eqref{M2b}, while the second is clearly smaller than
$$
\frac{C}{N^3\varepsilon^2}\sum_{k=1}^N\bbE Z_k^2\to 0
$$
as $N\to+\infty$, thanks to \eqref{sublinear}.

\subsubsection*{Condition M1)}
We recall the shorthand notation  
$$
\mu_{0}Q_{N}^* \coloneqq \frac{1}{N} \sum_{n=1}^{N}\mu_{0} P^{n-1}.
$$ Note that, by the Markov property
\begin{eqnarray}
\label{010401}
&&\frac {1}{N}\sum_{n=1}^{N} \mathbb{E}\Big[ Z_{n}^2, \, | Z_{n} | \ge \varepsilon\sqrt{N}  \Big]=\langle\mu_{0}Q_{N}^*,G_{N}\rangle,
\end{eqnarray}
where $G_{N}(x)\coloneqq \mathbb{E}_x\Big[ Z_{1}^2, \,| Z_{1} | \ge \varepsilon\sqrt{N}\Big].$ We claim that the right hand side of \eqref{010401} vanishes, as $N\to+\infty$. The proof shall be based on  the following.
\begin{lemma}
\label{convergence}
Suppose that
$\{\mu_{N},\,N\ge1\}\subset {\cal P}$ weakly converges to $\mu$,
 $\{G_{N},\,N\ge1\}\subset B(E)$ converges  to $0$ uniformly on compact sets and  there exists $\delta>0$ such that
\begin{equation}
G_*:=\limsup_{N\rightarrow \infty}\langle\mu_N, |G_{N}|^{1+\delta}\rangle<+\infty.\label{lab30a3}
\end{equation}
Then,
$
\lim_{N\to+\infty}\langle\mu_N,G_{N}\rangle= 0.
$
\end{lemma}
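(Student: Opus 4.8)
The plan is to split the integral $\langle \mu_N, G_N\rangle$ into a contribution from a large compact set $K$ and its complement, and control each piece separately. First I would fix $\varepsilon > 0$ and, using that $G_*<\infty$ together with a uniform-integrability argument, choose a compact set $K$ so large that $\sup_N \langle \mu_N, |G_N| 1_{K^c}\rangle < \varepsilon$. This uses H\"older's inequality: for any measurable set $A$,
\[
\langle \mu_N, |G_N| 1_A\rangle \le \langle \mu_N, |G_N|^{1+\delta}\rangle^{1/(1+\delta)} \mu_N(A)^{\delta/(1+\delta)} \le (G_* + 1)^{1/(1+\delta)} \mu_N(A)^{\delta/(1+\delta)}
\]
for $N$ large, so it suffices to make $\sup_N \mu_N(K^c)$ small. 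That in turn follows from Prokhorov's theorem: a weakly convergent sequence $\{\mu_N\}$ is tight, so we may pick $K$ compact with $\sup_N \mu_N(K^c)$ as small as we wish. I would also use tightness of the limit $\mu$, though tightness of the sequence already subsumes it.

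Next, for the compact part, I would use that $G_N \to 0$ uniformly on $K$: given $\eta > 0$, for $N$ large enough $\sup_{x\in K}|G_N(x)| < \eta$, hence $\langle \mu_N, |G_N| 1_K\rangle \le \eta \mu_N(K) \le \eta$. Combining the two bounds gives $\limsup_N \langle \mu_N, |G_N|\rangle \le \varepsilon + \eta$ (the $\eta$ term even vanishes), and since $\varepsilon$ was arbitrary this yields $\langle \mu_N, G_N\rangle \to 0$. Note that one does not actually need weak convergence of $\mu_N$ here, only tightness; weak convergence is presumably stated because it is the form in which the lemma will be applied, via the averaged measures $\mu_0 Q_N^*$ converging to $\mu_*$ (Proposition \ref{contract} gives $d_1(\mu_0 Q_N^*,\mu_*)\to 0$, which implies weak convergence).

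The main obstacle — really the only subtle point — is the uniform-integrability step: turning the bound $\limsup_N \langle \mu_N, |G_N|^{1+\delta}\rangle < \infty$ into control of $\langle \mu_N, |G_N| 1_{K^c}\rangle$ uniformly in $N$. The H\"older estimate above handles the dependence on $G_N$ cleanly, reducing everything to smallness of $\mu_N(K^c)$, which is exactly tightness. One small care point is that $G_* < \infty$ is only a $\limsup$ statement, so the bound $\langle \mu_N, |G_N|^{1+\delta}\rangle \le G_* + 1$ holds only for $N \ge N_0$; the finitely many earlier terms are harmless since each $\langle \mu_N, |G_N|\rangle$ is individually finite (indeed $G_N \in B(E)$ and in the application $G_N$ is bounded) and we are taking a $\limsup$. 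Apart from this bookkeeping the proof is routine.
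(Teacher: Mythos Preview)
Your proof is correct and follows essentially the same route as the paper: split $\langle\mu_N,G_N\rangle$ over a compact set $K$ and its complement, bound the $K^c$ part via H\"older together with tightness of $\{\mu_N\}$, and bound the $K$ part using uniform convergence of $G_N$ to $0$ on compacts. Your remarks on the $\limsup$ bookkeeping and on needing only tightness (not full weak convergence) are accurate refinements of the paper's argument.
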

\proof
Suppose that $K$ is compact and $\varepsilon>0$ is arbitrary. Then,
\begin{equation}\label{lab30}
|\langle\mu_N,G_{N}\rangle|\le |\langle\mu_N,G_{N}1_{K}\rangle|+|\langle\mu_N,G_{N}1_{K^c}\rangle|
\end{equation}
Using H\"older's inequality and choosing appropriately the compact set we can estimate the second term by
\begin{equation*}
 \langle\mu_N,|G_{N}|^{1+\delta}\rangle^{1/(1+\delta)}\mu_{N}^{\delta/(1+\delta)}(K^c)
\leq G_*\mu_{N}^{\delta/(1+\delta)}(K^c)\leq \varepsilon ,\quad\forall\,N\ge1.
\end{equation*}
The first term can be estimated by $\|G_N\|_{\infty,K}\to0$, as $N\to+\infty$. Since $\varepsilon>0$ has been chosen arbitrarily the conclusion of the lemma follows.
\endproof

From Proposition \ref{contract} we have $\lim_{N\to+\infty}\mu_{0}Q_{N}^*=\mu_{*}$, weakly.
To prove that $\{G_N,\,N\ge1\}$ converges to $0$ uniformly on compact sets it suffices to show that for any $x_0\in E$ and $\varepsilon, R>0$ we have
\begin{equation}
\label{conv-mart}
\lim_{N\to+\infty}\sup_{x\in B_R(x_0)}\mathbb{E}_x\left[M^{2}_{1},\,| M_{1}|\geq\varepsilon\sqrt{N}\right]= 0.
\end{equation}
To show  \eqref{conv-mart} it suffices only to prove that for any $x_0\in E$ and $R>0$ there exists $\delta>0$ such that
\begin{equation}
\label{conv-martb}
M_R^*:=\limsup_{N\to+\infty}\sup_{x\in B_R(x_0)}\bbE_x|M_{1}|^{2+\delta}<+\infty.
\end{equation}
Equality \eqref{conv-mart} then follows from the above and Chebyshev's inequality. 
Using definition \eqref{mart} we get, with $\delta$ as in the statement of H2), that
\begin{eqnarray}
\label{010501}
&&
\mathbb{E}_x |M_{1}|^{2+\delta}\le C\left\{\bbE_x\left[\left|\chi(X_{1})-\chi(X_{0})\right|^{2+\delta}\right]\vphantom{\int_0^1}\right.\nonumber
\\
&&
+\left.\bbE_x\left[\int_{0}^{1}\left|\psi(X_{s})\right|^{2+\delta}\,ds\right]\right\}
\leq C\left\{(\|\chi\|_{L}+1)^{2+\delta}\langle \delta_xP^1,\rho^{2+\delta}_{x}\rangle\right.\nonumber\\
&&
\left.+(\|\psi\|_{L}+|\psi(x_0)|+1)^{2+\delta}\langle \delta_x Q_1,\rho^{2+\delta}_{x_0}+1\rangle\right\}.
\end{eqnarray}
Thus, \eqref{conv-martb} and also \eqref{conv-mart} follow. In particular \eqref{010501}  implies that $G_N(x)$ converges to $0$ uniformly on compact sets. 
On the other hand, since
$$
|G_N(x)|^{1+\delta/2}\le \mathbb{E}_x |M_{1}|^{2+\delta},\quad\forall\,x\in E
$$
condition  \eqref{lab30a3} easily follows from \eqref{010501} and  hypothesis H3).
This concludes the proof of M1).

\subsubsection*{Condition M2)} Note that
\begin{eqnarray}
\label{012611}
&&
\mathbb{E}Z^{2}_{n}\le 2\left\{\bbE\left|\chi(X_{n+1})-\chi(X_{n})\right|^{2}+\int_{n}^{n+1}\bbE|\psi(X_s)|^2ds\right\}\nonumber\\
&&
\leq C\left\{(\|\chi\|_{L}+1)^{2}[\bbE\rho^{2}_{x_0}(X_{n+1})+\bbE\rho^{2}_{x_0}(X_{n})]\vphantom{\int_0^1}\right.\nonumber\\
&&
\left.+[\|\psi\|_{L}+|\psi(x_0)|+1]^{2}\int_n^{n+1}[\bbE \rho^{2}_{x_0}(X_s)+1]ds\right\}
\end{eqnarray}
and thanks to H3) we have $\sup_{n\ge0}\mathbb{E}Z^{2}_{n}<+\infty$ so \eqref{sublinear} holds.

 Using the Markov property we can write for any $\si\ge0$ (to be specified later) 
 \begin{eqnarray*}
&&\frac{1}{\ell}\sum_{m=1}^{\ell}\mathbb{E}\left|
    \bbE\left\{\frac{1}{K}\left[<\!M\!>_{mK-1}-<\!M\!>_{(m-1)K}\right] -\sigma^{2}\Big|{\frak F}_{(m-1)K}\right\}\right|\\
&&=\frac{1}{\ell}\sum_{m=0}^{\ell-1} \langle\mu_{0} P^{(m-1)K},|H_K|\rangle.
\end{eqnarray*}
with
$$
H_K(x)\coloneqq \mathbb{E}_x\left[\frac{1}{K}<\!M\!>_{K}-\sigma^{2}\right]= \mathbb{E}_x\left[\frac{1}{K}M^2_{K}-\sigma^{2}\right].
$$
Note that
\begin{equation}
\label{011506}
H_K(x)=\frac{1}{K}\sum_{j=0}^{K-1}P^jJ(x),
\end{equation}
where
$J(x):=\bbE_x\langle M\rangle_1-\sigma^2$.
Let 
$
\mu_{0}Q_{\ell}^K\coloneqq 1/\ell\,\sum_{m=1}^{\ell}\mu_{0} P^{(m-1)K}.
$
\begin{lemma}
\label{cont-hk}
For any $K\ge 1$ we have $H_K\in C(E)$. Moreover, for $\delta>0$ as in hypothesis H3) we have
\begin{equation}
\label{square-est}
 \limsup_{\ell\to+\infty}\langle\mu_{0}Q_{\ell}^K,|H_K|^{1+\delta/2}\rangle<+\infty.
\end{equation}
\end{lemma}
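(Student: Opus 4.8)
The plan is to treat the two assertions separately, in both cases reducing to facts already established for the semigroup acting on $C_{lin}(E)$. For continuity of $H_K$, I first observe from \eqref{011506} that it suffices to show $J(\cdot)\in C_{lin}(E)$ and then invoke Lemma \ref{linear} (and Lemma \ref{bound}), which guarantee $P^j(C_{lin}(E))\subset C_{lin}(E)$ under H2), so each summand $P^jJ$ is in $C_{lin}(E)\subset C(E)$, and hence so is the finite average $H_K$. Thus the real task is to understand $J(x)=\bbE_x\langle M\rangle_1-\sigma^2$. Using the definition \eqref{mart} of $M_T$ and the martingale increment $Z_1=M_1-M_0=\chi(X_1)-\chi(X_0)+\int_0^1\psi(X_s)\,ds$, one has $\bbE_x\langle M\rangle_1=\bbE_x Z_1^2$ (since ${\frak F}_0$ is trivial under $\bbP_x$), so $J(x)=\bbE_x\big[(\chi(X_1)-\chi(X_0)+\int_0^1\psi(X_s)ds)^2\big]-\sigma^2$. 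I would expand this square into a finite sum of terms each of which, by Lemma \ref{lab38} (giving $\chi\in\mbox{Lip}(E)\subset C_{lin}(E)$), Lemma \ref{linear}, Lemma \ref{bound}, and the moment bound H2), defines a function of $x$ that is continuous and grows at most like $\rho_{x_0}^2(x)$; continuity in the integral term follows from Lemma \ref{linear-1} together with dominated convergence justified by H2). This yields $J\in C(E)$ with $|J(x)|\le C(1+\rho_{x_0}^2(x))$, which is more than enough.

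For the estimate \eqref{square-est}, the strategy is to bound $|H_K(x)|^{1+\delta/2}$ pointwise by something that is uniformly integrable against the averaged measures $\mu_0 Q_\ell^K$ as $\ell\to\infty$. From \eqref{011506} and the contraction/linearity estimates I would get $|H_K(x)|\le K^{-1}\sum_{j=0}^{K-1}|P^jJ(x)|\le C(1+\rho_{x_0}^2(x))$ using $|P^jJ(x)|\le \|P^jJ\|_L\,\rho_{x_0}(x)+|P^jJ(x_0)|$ controlled via Lemma \ref{bound} applied with the linear-growth bound on $J$; more carefully, since $|J(x)|\le C(1+\rho_{x_0}^2(x))$, one has $P^j|J|(x)\le C(1+\langle\delta_xP^j,\rho_{x_0}^2\rangle)$, and the second moment of $\delta_xP^j$ is controlled (locally uniformly in $j$, hence uniformly in $j\le K-1$, but actually all we need is local uniformity after integrating against $\mu_0 Q_\ell^K$). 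Raising to the power $1+\delta/2$ gives $|H_K(x)|^{1+\delta/2}\le C(1+\rho_{x_0}^{2+\delta}(x))$. It then remains to check $\limsup_{\ell\to\infty}\langle\mu_0 Q_\ell^K,1+\rho_{x_0}^{2+\delta}\rangle<+\infty$, and this is exactly where hypothesis H3) enters: $\langle\mu_0 P^{(m-1)K},\rho_{x_0}^{2+\delta}\rangle=\bbE\rho_{x_0}^{2+\delta}(X_{(m-1)K})\le A_*$ for every $m$, so the Cesàro average is bounded by $1+A_*$ uniformly in $\ell$ and $K$.

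The main obstacle I anticipate is not conceptual but bookkeeping: one must be careful that the pointwise growth bound $|H_K(x)|^{1+\delta/2}\le C(1+\rho_{x_0}^{2+\delta}(x))$ holds with a constant $C$ that may depend on $K$ (which is harmless, since $K$ is fixed in the statement) but that the final integrability is genuinely uniform in $\ell$ — and here H3) is doing all the work. A secondary subtlety is establishing $|P^jJ(x)|\le C(1+\rho_{x_0}^2(x))$ rather than just $C(1+\rho_{x_0}^{2+\delta}(x))$; in fact we only need the weaker $\rho_{x_0}^{2+\delta}$-bound, so after raising to the power $1+\delta/2$ we would formally land at exponent $(2+\delta)(1+\delta/2)$, which is too large. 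To avoid this one should bound $|H_K(x)|$ itself (before taking the power) by $C(1+\rho_{x_0}^2(x))$: this requires controlling $\langle\delta_xP^j,\rho_{x_0}^2\rangle$ by $C(1+\rho_{x_0}^2(x))$ locally uniformly, which follows from H2) on balls and a Lyapunov-type argument, or more simply from the observation that for the purpose of \eqref{square-est} we integrate first and may use H3) directly on $\langle\mu_0P^{(m-1)K}P^j,\rho_{x_0}^2\rangle\le A_*^{2/(2+\delta)}$ by Jensen. Threading this correctly — estimating $H_K$ by $\rho_{x_0}^2$ so that the power $1+\delta/2$ brings us to exactly $\rho_{x_0}^{2+\delta}$, then invoking H3) — is the one point that demands attention.
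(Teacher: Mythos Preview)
Your plan for \eqref{square-est} is essentially correct once you commit to the ``integrate first'' route you describe at the end. The pointwise estimate $|H_K(x)|\le C(1+\rho_{x_0}^2(x))$ you initially chase is \emph{not} available from the hypotheses: H2) only bounds $\langle\delta_xP^j,\rho_{x_0}^2\rangle$ on balls, not by a multiple of $1+\rho_{x_0}^2(x)$. But as you yourself observe, one can instead use Jensen to get $|H_K(x)|^{1+\delta/2}\le C_K(1+\bbE_x|M_K|^{2+\delta})$, expand $M_K$ via \eqref{mart}, and after integrating against $\mu_0 Q_\ell^K$ every term reduces to $\bbE\rho_{x_0}^{2+\delta}(X_t)$ for some $t\ge 0$, bounded by $A_*$ via H3). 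This is precisely the paper's argument.

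The continuity argument, however, has a genuine gap. You write that it suffices to show $J\in C_{lin}(E)$ and then invoke Lemma~\ref{linear}; but $J$ grows \emph{quadratically} (as you immediately acknowledge), so $J\notin C_{lin}(E)$ and Lemma~\ref{linear} says nothing about $P^jJ$. Your route through the representation \eqref{011506} can be repaired by running a separate truncation argument (via H2)) to show that $P^j$ preserves continuity of quadratically growing functions, but that step is missing from your outline. The paper sidesteps the issue by truncating $\psi$ and $\chi$ at level $L$ (cf.\ \eqref{012712}) to define $M_K^{(L)}$ and $H_K^{(L)}$; since products of \emph{bounded} Lipschitz functions are bounded Lipschitz and $P^t$ preserves that class by \eqref{052712}, each $H_K^{(L)}$ is Lipschitz on every ball, and then H2) yields $\|H_K^{(L)}-H_K\|_{\infty,B_R(x_0)}\to 0$ as $L\to\infty$, giving $H_K\in C(E)$ as a locally uniform limit of continuous functions.
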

\proof
Suppose that $L>1$ is arbitrary and $\psi_L(x)$ is given by \eqref{012712}. An analogous formula defines also $\chi_L(x)$.
Let $M_t^{(L)}$ be given by the analogue of  \eqref{mart}, where $\psi$ and $\chi$ are replaced by $\psi_L$ and $\chi_L$ respectively. Thanks to  \eqref{052712} it is easy to verify that the function 
$$
H_K^{(L)}(x)\coloneqq  \mathbb{E}_x\left[\frac{1}{K}[M^{(L)}_{K}]^2-\sigma^{2}\right]
$$
is Lipschitz on $B_R(x_0)$ for  any $R>0$ and $x_0\in E$ and, due to hypothesis H2)
$$
\lim_{L\to+\infty}\|H_K^{(L)}-H_K\|_{\infty,B_R(x_0)}=0.
$$
This proves that $H_K\in C(E)$.

Considerations similar  to those made in the proof of estimate  \eqref{010501} lead to
\begin{eqnarray}
&&
\langle\mu_{0}Q_{\ell}^K,|H_K|^{1+\delta/2}\rangle\nonumber\\
&&
\leq \frac{C}{\ell}\left\{(\|\chi\|_{L}+1)^{2+\delta}\sum_{m=1}^{\ell}\left[\bbE\rho^{2+\delta}_{x_0}(X_{m K})+
\bbE\rho^{2+\delta}_{x_0}(X_{(m-1) K})\right]\right.\nonumber\\
&&
\left.+[\|\psi\|_{L}+|\psi(x_0)|+1]^{2}\sum_{m=1}^{\ell}\int_{(m-1)K}^{mK}[\bbE\rho^{2+\delta}_{x_0}(X_{s})+1]ds\right\}
\end{eqnarray}
and the expression on the right hand side remains bounded, as $\ell\to+\infty$, thanks to assumption H3). Thus \eqref{square-est} follows.
\qed

Using the above lemma we conclude that for any $K$
\begin{eqnarray*}
\lim_{\ell\to+\infty}\langle \mu_{0}Q_{\ell}^K,|H_K|\rangle= \langle \mu_*, |H_K|\rangle.
\end{eqnarray*}
Since $\mu_{*}$ is ergodic under the Markovian dynamics, from Birkhoff's ergodic theorem we obtain that the limit of the expression on the right hand side, as $K\to+\infty$, equals $0$,
provided that
$
\si^2:=\mathbb{E}_{\mu_*}M_1^2.
$
This ends the inspection of hypothesis M2).

\subsubsection*{Condition M3)} We can rewrite the expression appearing under the limit in \eqref{m3} as being equal to
\begin{eqnarray*}
\frac 1K\sum_{j=0}^{K-1}\langle \mu_{0}Q_{\ell}^K, G_{\ell,j}\rangle
\end{eqnarray*}
where
$$
G_{\ell,j}(x)\coloneqq \mathbb{E}_x\left[1+Z_{j+1}^2,|M_{j}|\geq\epsilon\sqrt{\ell K}\right].
$$
It suffices only to prove that 
\begin{eqnarray}
\label{G-j}
\limsup_{\ell\rightarrow\infty}\langle \mu_{0}Q_{\ell}^K, G_{\ell,j}\rangle=0\quad\forall\,j=0,\ldots,K-1.
\end{eqnarray}
From the Markov inequality we obtain
\begin{eqnarray*}
&&\mathbb{P}_x\Big[|M_{j}|\geq \epsilon\sqrt{\ell K}]\leq \frac{\mathbb{E}_{x}|M_{j}|}{\epsilon\sqrt{\ell K}}\\&&\leq
\frac{1}{\epsilon\sqrt{\ell K}}\left\{\mathbb{E}_{x}|\chi(X_{j})-\chi(x)|+\left|\chi_j(x)\right|\right\}.\nonumber
\end{eqnarray*}
Using Lemmas \ref{lab21},  \ref{lab38}   and H2) we obtain that for any $x_0\in E$
\begin{eqnarray}
\label{012201-2011}
\sup_{x\in B_R(x_0)}\mathbb{P}_{x}\Big[|M_{j}|\geq \epsilon\sqrt{\ell K}|\Big]\leq \frac{C}{\sqrt{\ell K}}.
\end{eqnarray}
Estimating as in \eqref{012611} we get
\begin{eqnarray}
\label{022611}
&&\sup_{x\in B_R(x_0)}\mathbb{E}_{x}\left[Z^{2}_{j+1},| M_{j}|\geq\epsilon\sqrt{\ell K}\right]\\
&&
\leq\,
2\left\{\sup_{x\in B_R(x_0)}\mathbb{E}_{x}\left\{\left[\chi(X_{j+1})-\chi(X_{j})\right]^2,| M_{j}|\geq\epsilon\sqrt{\ell K}\right\}\right.\nonumber\\
&&+
\left.\sup_{x\in B_R(x_0)}\mathbb{E}_{x}\left\{\Big[\int^{j+1}_{j}\psi (X_{s})ds\Big]^{2},| M_{j}|\geq\epsilon\sqrt{\ell K}\right\}\right\}\nonumber
\\
&&
\le C\sup_{t\in[0,K]}\sup_{x\in B_R(x_0)}\mathbb{E}_{x}\left[\rho_x^{2}(X_t),| M_{j}|\geq\epsilon\sqrt{\ell K}\right].\nonumber
\end{eqnarray}
for some constant $C$ independent of $\ell$. The utmost right hand side of \eqref{022611} can be further estimated by 
\begin{eqnarray*}
&&C\sup_{t\in[0,K]}\sup_{x\in B_R(x_0)}\left\{\mathbb{E}_{x}\left[\rho_x^{2+\delta}(X_t),| M_{j}|\geq\epsilon\sqrt{\ell K}\right]\right\}^{2/(2+\delta)}\\
&&\times\left\{\sup_{x\in B_R(x_0)}\mathbb{P}_{x}\left[| M_{j}|\geq\epsilon\sqrt{\ell K}\right]\right\}^{\delta/(2+\delta)}
\end{eqnarray*}
Using \eqref{012201-2011} and hypothesis H2) we conclude that
\begin{equation}
\label{022201-2011}
\lim_{\ell\to+\infty}\sup_{x\in B_R(x_0)}G_{\ell
,j}(x)=0,\quad\forall\,x_0\in E, R>0.
\end{equation} 
To obtain \eqref{G-j} it suffices to prove only  that for  $\delta>0$ as in H3) we have
 \begin{equation}
\label{032201-2011}
 \limsup_{\ell\rightarrow\infty}\langle \mu_{0}Q_{\ell}^K, G_{\ell,j}^{1+\delta/2} \rangle<\infty,\quad\forall\,K\ge1, 
\,0\le j\le K-1.
\end{equation} 
Note that 
\begin{eqnarray}
\langle \mu_{0}Q_{\ell}^K, G_{\ell,j}^{1+\delta/2} \rangle\le\mathbb{E}_{\mu_{0}Q_{\ell}^K}(1+Z_{j+1}^2 )^{1+\delta/2}\label{lab48}
\end{eqnarray}
Using Lemmas  \ref{lab21},  \ref{lab38}   and hypothesis H3) we can estimate the expression on the right hand side by
 \begin{eqnarray}
\sup_{t\ge0}\bbE_{ \mu_{0}Q_{\ell}^K} \rho^{2+\delta}_{x_0}(X_t)\le
A_*
\label{lab48a}
\end{eqnarray}
for some $x_0$ and  $A_*$ as in the statement of H3).
Thus \eqref{032201-2011} follows.

\section{Applications.}
\label{dissipative} 
\subsection{Stochastic differential equation with a dissipative drift}

\label{dissipative1} 
In this section we consider an example of a stochastic differential equation with a dissipative drift coming from Section 6.3.1, p. 108  of \cite{dpz2}.
Suppose that  $(H,|\cdot|)$ is a separable Hilbert space, with the scalar product $\langle\cdot,\cdot\rangle$ and $(-A):D(A)\to H$ is the generator of $\{S_t,\,t\ge0\}$ -  a strongly continuous, analytic semigroup  of operators on $H$, for which
there exists $\omega_1\in\bbR$ such that  $\{e^{\omega_1 t}S_t,\,t\ge0\}$
is a semigroup of contractions. 
The above implies, in particular, that
\begin{eqnarray}
\langle Ax,x\rangle\geq \omega_1|x|^2,\quad x\in D(A).
 \label{lab74}
\end{eqnarray}
Hence any $\lambda>-\omega_1$ belongs to the resolvent set of $A$ and we can define a bounded operator $(\lambda+A)^{-1}$.

Next,  we suppose that $F:H\to H$ is Lipschitz, i.e. there is $L_F>0$ such that
 \begin{eqnarray}
| F(y+z)-F(z)|\leq L_F|y|\label{lip}
\end{eqnarray}
and for some $\omega_2\in\bbR$ such that
\begin{eqnarray}
 \omega:=\omega_1+\omega_2 >0\label{lab70}
\end{eqnarray}
we have
\begin{eqnarray}
\langle F(y+z)-F(z),y\rangle\leq -\omega_2|y|^2\label{lab69},\quad\forall\,y,z\in H.
\end{eqnarray}

 Suppose that $\{e_i,\,i\ge1\}$ is an orthonormal base in $H$ and
 $\{B_p(t),\,t\ge0\}_{p\ge1}$ is a collection of independent, standard, one-dimensional Brownian motions over $(\Omega ,\mathfrak{F},\mathbb{P})$ that are non-anticipative with respect to a filtration $\{{\frak F}_{t},t\ge0\}$ of sub $\si$-algebras of $\frak F$. Let  $\{\gamma_p,\,p\ge1\}$ be a sequence of reals such that
$
 \sum_{p=1}^\infty \gamma_p^2<\infty,
$ then 
\begin{equation*}
W(t):=\sum_{p=1}^{+\infty}\gamma_p B_p(t)e_p,\quad t\ge0
\end{equation*}
 is an $H$-valued Wiener process with the covariance operator 
\begin{equation}
\label{Q}
Qx=\sum_{p=1}^{+\infty}\gamma_p^2\langle x,e_p\rangle e_p,\quad x\in H.
\end{equation}
 Let
$$
Z_t:=\int_0^t S_{t-s} d W(s)
$$
 be the stochastic convolution process defined in Section 5.1.2 of \cite{dpz1}. It is  Gaussian and $H$-continuous.   We assume that
  \begin{eqnarray}
\sup_{t\geq 0}\int_0^t\mbox{Trace}(S_s^*QS_s)ds< \infty \label{lab73a},
\end{eqnarray}
 which in turn guarantees that 
 \begin{eqnarray}
\sup_{t\geq 0}\mathbb{E}|Z_t|^2< \infty \label{lab73}.
\end{eqnarray}

We consider the following It\^o stochastic differential equation
\begin{eqnarray}
\left\{
\begin{array}{l}
dX_t(\xi)=[-AX_t(\xi)+F(X_t(\xi))]dt+dW(t)\\
X_0(\xi)=\xi,
\end{array}
\right. \label{lab50}
\end{eqnarray}
where $\xi$ is an $\mathfrak{F}_0$-measurable, $H$-valued, random element. When $\xi$ is obvious from the context we shall abbreviate and write $X_t$, instead of $X_t(\xi).$ We shall also write $X_t(x)$ when  $\xi=x$ with probability one.

 A solution of \eqref{lab50} is understood in the mild sense, see  p. 81 of \cite{dpz2}, i.e.  $\{X_t,\,t\ge0\}$ is an $\{{\frak F}_t,\,t\ge0\}$ adapted, continuous trajectory process, such that 
 \begin{eqnarray*}
X_t=S_t\xi +\int_0^t S_{t-s}F(X_s)ds+Z_t,\quad t\ge0,
\end{eqnarray*}
$\bbP$ a.s. 
We shall assume that there exists $\delta>0$ such that
\begin{equation}
\label{two+}
\bbE|\xi|^{2+\delta}<+\infty.
\end{equation}

  It is known, see Theorem 5.5.11 of \cite{dpz2}, that under the hypotheses made about $A$, $F$ and $W(t)$, for each $x\in H$ there exists a unique mild solution $X_t(x)$ of \eqref{lab50}. The solutions  $\{X_t(x),\,t\ge0\}$, $x\in H$ form a Markov family that corresponds to a Feller transition semigroup. Moreover, there exists a unique invariant probability measure $\mu_*$ for the above Markov family such that for any random element $\xi$ the laws of $X_t(\xi)$ converge to $\mu_*$, in the sense of the weak convergence of measures, see Theorem 6.3.3, p. 109 of \cite{dpz2}.

Our main theorem in this section is the following.
\begin{thm}
\label{thm012901}
Suppose that $\psi\in\,${\em Lip}$(H)$ and $\{X_t(\xi),\,t\ge0\}$ is the solution of \eqref{lab50}.
Then, under the  assumptions made above, the functional $\int_0^t\psi(X_s(\xi))ds$,  satisfies the conclusions $1)-3)$ of Theorem $\ref{lab3}$.
\end{thm}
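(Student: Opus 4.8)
The plan is to verify that the Markov family $\{X_t(\xi),\,t\ge0\}$ solving \eqref{lab50} satisfies hypotheses H0)--H3) of Theorem \ref{lab3}, and then simply invoke that theorem. The Feller property and stochastic continuity (hypothesis H0)) are already recorded in the cited results of \cite{dpz2} (the transition semigroup is Feller), while stochastic continuity follows from the continuity of trajectories $t\mapsto X_t(x)$ in $H$ together with the dominated convergence theorem applied to bounded continuous functions; equivalently, one checks \eqref{continuous} directly from the mild formulation.

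The heart of the matter is hypothesis H1), the exponential contraction in the Wasserstein metric $d_1$ built on the norm $|\cdot|$ of $H$. Here I would use the dissipativity estimates \eqref{lab74}, \eqref{lip}, \eqref{lab69}, \eqref{lab70}. Given two initial conditions $x,y\in H$, the difference $Y_t:=X_t(x)-X_t(y)$ solves a \emph{deterministic} (pathwise) equation since the additive noise cancels: in mild form $Y_t=S_t(x-y)+\int_0^tS_{t-s}[F(X_s(x))-F(X_s(y))]\,ds$. Differentiating $|Y_t|^2$ (rigorously, working with the Yosida approximants of $A$ and passing to the limit, as on p.~108 of \cite{dpz2}) and using $\langle AY_t,Y_t\rangle\ge\omega_1|Y_t|^2$ together with $\langle F(X_s(x))-F(X_s(y)),Y_s\rangle\le-\omega_2|Y_s|^2$ gives
\begin{equation*}
\frac{d}{dt}|Y_t|^2\le -2(\omega_1+\omega_2)|Y_t|^2=-2\omega|Y_t|^2,
\end{equation*}
hence $|X_t(x)-X_t(y)|\le e^{-\omega t}|x-y|$ almost surely. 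Taking expectations and using the Kantorovich--Rubinstein coupling characterization of $d_1$, for arbitrary $\mu,\nu\in\mathcal P_1(H)$ one couples the initial data optimally and propagates: $d_1(\mu P^t,\nu P^t)\le\mathbb E|X_t(\xi)-X_t(\eta)|\le e^{-\omega t}\mathbb E|\xi-\eta|$, and optimizing over couplings yields \eqref{lab} with $\hat c=1$ and $\gamma=\omega$. One also checks $\mu P^t\in\mathcal P_1$ when $\mu\in\mathcal P_1$, which follows from the moment bound below.

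For hypotheses H2) and H3) I would establish a uniform-in-time moment bound of order $2+\delta$. Writing $x_0=0$ so that $\rho_{x_0}(x)=|x|$, the mild solution satisfies $X_t(x)=S_tx+\int_0^tS_{t-s}F(X_s(x))\,ds+Z_t$. Using $|S_t|\le e^{-\omega_1 t}$ (up to the sign convention; more precisely $\|e^{\omega_1 t}S_t\|\le1$) and the Lipschitz/dissipativity bounds on $F$, a Gronwall-type argument on $\mathbb E|X_t(x)|^{2+\delta}$ — splitting $F(X_s)=[F(X_s)-F(Z_s)]+F(Z_s)$ to exploit the one-sided bound \eqref{lab69}, and controlling $\sup_t\mathbb E|Z_t|^{2+\delta}$ via \eqref{lab73a} and Gaussianity of the stochastic convolution (all Gaussian moments are finite and comparable to the second moment, which is bounded by \eqref{lab73}) — yields
\begin{equation*}
\sup_{t\ge0}\sup_{|x|\le R}\mathbb E|X_t(x)|^{2+\delta}\le C_R(1+R^{2+\delta})<\infty,
\end{equation*}
which is exactly \eqref{lab28}. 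The same computation started from the random initial condition $\xi$ satisfying \eqref{two+} gives $A_*=\sup_{t\ge0}\mathbb E|X_t(\xi)|^{2+\delta}<\infty$, i.e. \eqref{lab29}, so $|\cdot|^{2+\delta}$ is a Lyapunov function. Finally $\mu_0$, the law of $\xi$, lies in $\mathcal P_1(H)$ by \eqref{two+}. With H0)--H3) in hand, Theorem \ref{lab3} applies verbatim and delivers conclusions 1)--3) for $\int_0^t\psi(X_s(\xi))\,ds$.

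The main obstacle is the rigorous justification of the pathwise contraction estimate: $|Y_t|^2$ is not a priori differentiable because $X_t$ need not take values in $D(A)$, so one must either work with the Yosida approximations $A_n=nA(n+A)^{-1}$ and the corresponding approximate equations (showing the estimate is stable under $n\to\infty$), or use a variational/energy argument in the spirit of p.~108 of \cite{dpz2}. The same technical care is needed for the $(2+\delta)$-moment bound. Once these a priori estimates are secured, the remainder is a routine check of the hypotheses and an appeal to Theorem \ref{lab3}.
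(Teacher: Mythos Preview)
Your proposal is correct and follows essentially the same route as the paper: verify H0)--H3) and invoke Theorem~\ref{lab3}, obtaining the pathwise contraction $|X_t(x)-X_t(y)|\le e^{-\omega t}|x-y|$ via the Yosida approximation of $A$ and the dissipativity \eqref{lab74}, \eqref{lab69}, then passing to $d_1$ by coupling; and obtaining the $(2+\delta)$-moment bounds by subtracting off the stochastic convolution $Z_t$ (your splitting $F(X_s)=[F(X_s)-F(Z_s)]+F(Z_s)$ is exactly the paper's $Y_t:=X_t-Z_t$) and using Gaussianity of $Z_t$ together with \eqref{lab73}. The only cosmetic difference is that the paper differentiates $|Y_{t,\alpha}|_\epsilon=\sqrt{|Y_{t,\alpha}|^2+\epsilon^2}$ rather than $|Y_t|^2$, and writes out the Yosida step in full, whereas you (correctly) flag it as the technical point requiring care.
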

\begin{proof} Our calculations are based on a similar computation made in \cite{peszat} in the context of an equation with a L\'evy noise.
Define the {\em  Yosida approximation} of $A_{\omega_1}:=A-\omega_1$ as a bounded operator 
$$
A_{\alpha,\omega_1} \coloneqq -\alpha^{-1}[(I+\alpha A_{\omega_1})^{-1}-I]=A_{\omega_1}(I+\alpha A_{\omega_1})^{-1}
$$
The associated semigroup $\{S_{t,\alpha} , t\geq 0\}$ strongly converges to $\{e^{\om_1 t}S_t, t\geq 0\}$, as  $\alpha \rightarrow 0\!+$, see Theorem 3.5 of \cite{nagel}. Let $\hat A_{\alpha,\omega_1}:=A_{\alpha,\omega_1}+\omega_1I$.
\begin{lemma}\label{lab63}
Suppose that $A$ satisfies \eqref{lab74}.
 Then, for any  $\alpha > 0$  
$$
\langle  \hat A_{\alpha,\omega_1}x,x\rangle\geq \omega_1 |x|^2,\quad\forall\,x\in H.
$$
\end{lemma}
\begin{proof}
It suffices only to show that for any $x\in D(A)$ and $y=[1+\alpha (A-\omega_1)]x$ 
we have $\langle A_{\alpha,\omega_1} y,y\rangle\geq 0.$
 Indeed
 \begin{eqnarray*}
 &&\langle A_{\alpha,\omega_1} y,y\rangle=\langle(A-\omega_1)[1+\alpha(A-\omega_1)]^{-1}y,y\rangle\\
 &&
 =\langle(A-\omega_1) x,[1+\alpha(A-\omega_1)x]\rangle\\
 &&=\langle(A-\omega_1) x,x\rangle+\alpha|(A-\omega_1)x|^2\geq 0.
 \end{eqnarray*}
\end{proof}
From the above lemma, \eqref{lab74} and \eqref{lab69} we conclude that.
\begin{cor} We have
\begin{eqnarray}
\langle (-\hat A_{\alpha,\omega_1})y+F(y+z)-F(z),y\rangle\leq -\omega|y|^2\label{lab69a},
\end{eqnarray}
for all $y\in D(A)$, $z\in H$,
\end{cor}

Denote by $X_{t,\alpha}$ the solution of 
\begin{eqnarray}
\left\{
\begin{array}{l}
dX_{t,\alpha}=\left[-\hat A_{\alpha,\omega_1} X_{t,\alpha}+F(X_{t,\alpha})\right]dt+dW(t)\\
X_{0,\alpha}=\xi,
\end{array}
\right. \label{lab51}
\end{eqnarray}
Since the drift on the right hand side is Lipschitz and the noise is additive, this equation  \eqref{lab51} has a unique strong solution, i.e. the $\{{\frak F}_t,\,t\ge0\}$ adapted, $H$-continuous trajectory process $X_{t,\alpha}$ such that 
$$
X_{t,\alpha}=\xi+\int_0^t\left[-\hat A_{\alpha,\omega_1} X_{s,\alpha}+F(X_{s,\alpha})\right]ds+W(t)
$$
$\bbP$ a.s. One can show, see \cite{dpz2}, p. 81, that  $\lim_{\alpha \rightarrow 0\!+}\sup_{t\in[0,T]}|X_{t,\alpha}-X_t|=0$, $\bbP$ a.s. for any $T>0$.
Consider the linear equation with an additive noise
\begin{eqnarray}
\left\{
\begin{array}{l}
dZ_t(\xi)=-AZ_t(\xi)dt+dW(t)\\
Z_0(\xi)=\xi .\label{lab59}
\end{array}
\right.
\end{eqnarray}
It has a unique mild solution, given by formula,
\begin{eqnarray*}
Z_t(\xi)=S_t\xi+\int_0^t S_{t-s}\,d W(s) \quad t\geq 0.
\end{eqnarray*}
Denote by $Z_{t,\alpha}(0)$ the strong solution of  
\begin{eqnarray}
\left\{
\begin{array}{l}
dZ_{t,\alpha}(0)=-A_{\alpha,\omega_1} Z_{t,\alpha}(0)dt+dW(t)\\
Z_{0,\alpha}=0 .\label{lab59a}
\end{array}
\right.
\end{eqnarray}
To abbreviate the notation we shall write $Z_{t,\alpha}$, $Z_t$ instead of $Z_{t,\alpha}(0)$ and $Z_t(0)$, respectively.
We have $\lim_{\alpha \rightarrow 0\!+}\sup_{t\in[0,T]}|Z_{t,\alpha}-Z_t|=0$, $\bbP$ a.s. for any $T>0$.
Define
\begin{eqnarray*}
Y_{t,\alpha}=X_{t,\alpha}-Z_{t,\alpha}.
\end{eqnarray*}
Then,
\begin{eqnarray}
\frac{dY_{t,\alpha}}{dt}=-\hat A_{\alpha,\omega_1} Y_{t,\alpha}+F(Y_{t,\alpha}+Z_{t,\alpha})\label{lab52}.
\end{eqnarray}
For any $\epsilon>0$ define $|Y_{t,\alpha}|_{\epsilon}:=\sqrt{|Y_{t,\alpha}|^2+\epsilon^2}$. Then
\begin{eqnarray}
\frac{d}{dt}|Y_{t,\alpha}|_{\epsilon}=\Big\langle\frac{dY_{t,\alpha}}{dt},\frac{Y_{t,\alpha}}{|Y_{t,\alpha}|_{\epsilon}}\Big\rangle\label{lab53}
\end{eqnarray}
Substituting from  \eqref{lab52} into the right hand side of \eqref{lab53} we get
\begin{eqnarray}
&&\frac{d}{dt}|Y_{t,\alpha}|_{\epsilon}=-\frac{1}{|Y_{t,\alpha}|_{\epsilon}}\langle \hat A_{\alpha,\omega_1} Y_{t,\alpha},Y_{t,\alpha}\rangle+\frac{1}{|Y_{t,\alpha}|_{\epsilon}}\langle F(Y_{t,\alpha}+Z_{t,\alpha}),Y_{t,\alpha}\rangle\nonumber\\
&&=\frac{1}{|Y_{t,\alpha}|_{\epsilon}}[\langle (-\hat A_{\alpha,\omega_1} )Y_{t,\alpha}+ F(Y_{t,\alpha}+Z_{t,\alpha})\!\!-\!\!F(Z_{t,\alpha}),Y_{t,\alpha}\rangle]\nonumber\\
&&+\frac{1}{|Y_{t,\alpha}|_{\epsilon}}\langle F(Z_{t,\alpha}),Y_{t,\alpha}\rangle\label{lab54}
\end{eqnarray}
Using \eqref{lab69a} and the Cauchy-Schwarz inequality  we conclude that 
\begin{eqnarray*}
\frac{d}{dt}|Y_{t,\alpha}|_{\epsilon}
\leq -\omega| Y_{t,\alpha}|+|F(Z_{t,\alpha})|.
\end{eqnarray*}
Letting $\epsilon\to0+$ we get
\begin{eqnarray*}
|Y_{t,\alpha}|-|Y_{0,\alpha}|\le\int_0^t\left\{ -\omega| Y_{s,\alpha}|+|F(Z_{s,\alpha})|\right\}ds.
\end{eqnarray*}
Removing the Yosida regularization, by sending $\alpha\to 0+$, we get
\begin{eqnarray*}
|Y_t|-|Y_0|\le-\omega\int_0^t | Y_s|ds+\int_0^t |F(Z_s)|ds,\quad t\ge0.
\end{eqnarray*}
From this we conclude, via Gronwall's inequality, that
\begin{eqnarray}
|Y_t|\leq e^{-\omega t}|\xi|+\int_0^t e^{-\omega (t-s)}|F(Z_s)|ds\label{lab55}.
\end{eqnarray}

Consider now  $X_t(\xi)$ and $X_t(\overline{\xi})$ the two solutions of \eqref{lab50} corresponding to the initial conditions $\xi$ and  $\overline{\xi}$. We conclude that their difference $\Delta_t:=X_t(\xi)-X_t(\overline{\xi})$ satisfies equation
\begin{eqnarray}
\left\{
\begin{array}{l}
\dfrac{d\Delta_t}{dt}=-A\Delta_t+F(\Delta_t+X_t(\bar \xi))-F(X_t(\bar \xi))\\
\\
\Delta_0=\xi-\bar \xi.
\end{array}
\right. \label{lab50b}
\end{eqnarray}
An analogous calculation to the one carried out above, using the Yosida approximation and the dissipativity condition,  yields
\begin{eqnarray}
|\Delta_t|-|\Delta_0|\le-\omega\int_0^t | \Delta_s|ds\label{lab56a}.
\end{eqnarray}
Thus,
\begin{eqnarray}
|\Delta_t|\le e^{-\om t}|\Delta_0|,\quad \forall\,t\ge0.
\label{lab56b}
\end{eqnarray}

The proof of Theorem \ref{thm012901} consists in the inspection of the hypotheses of our main Theorem \ref{lab3}. From properties of a mild solution
of \eqref{lab50} we conclude that the semigroup corresponding to the Markov family $X_t(x)$ is Feller and stochastically continuous, so H0) holds.

\paragraph{\textbf{Verification of H2)}}

From  \eqref{lab55} and the Lipschitz property of $F$ we conclude that there exists $C>0$ such that
\begin{eqnarray}
\label{013001}
|Y_t(x)|\leq e^{-\omega t}|x|+C\int_0^t e^{-\omega (t-s)}(1+|Z_s|)ds
\end{eqnarray}
hence,  there is a constant $C>0$ such that
\begin{eqnarray}
\label{012901}
\mathbb{E} |Y_t(x)|^{2+\delta}\leq C\left[ |x|^{2+\delta} +\int_0^t e^{-\omega (2+\delta)(t-s)}(1+\mathbb{E}|Z_s|^{2+\delta})ds\right]
\end{eqnarray}
for all $t\ge0$. Thus, $\sup_{|x|\leq R}\mathbb{E} |Y_t(x)|^{2+\delta} <\infty$ 
and since
\begin{equation}
\label{022901}
X_t(x)=Y_t(x)+Z_t
\end{equation}
we conclude that
\begin{equation}
\label{032901}
\sup_{|x|\leq R}\mathbb{E} |X_t(x)|^{2+\delta}<\infty
\end{equation}
 for any $R>0$. This implies H2).

\paragraph{\textbf{Verification of H3)}}
Suppose that   $\mu_{0}$ is the law of $\xi$. 
Then,
$$
\sup_{t\geq 0}\mathbb{E}|X_t(\xi)|^{2+\delta}\leq C\,\left\{\sup_{t\geq 0}\mathbb{E}|Y_t(\xi)|^{2+\delta}+\sup_{t\geq 0}\mathbb{E}|Z_t|^{2+\delta}\right\}<\infty .
$$
From \eqref{lab73} and the fact that $\{Z_t,\,t\ge0\}$ is Gaussian it follows that $\sup_{t\geq 0}\mathbb{E}|Z_t|^{2+\delta}< \infty .$
Using \eqref{013001} we conclude easily  that there exists $C>0$ such that
\begin{eqnarray*}
&&\mathbb{E}|Y_t(\xi)|^{2+\delta}\\
&&
\leq C\left\{\,e^{-(2+\delta)\omega t}\mathbb{E}|\xi|^{2+\delta}+\mathbb{E}\left[\int_0^t e^{-\omega (t-s)}(1+|Z_s|)ds\right]^{2+\delta}\right\}.
\end{eqnarray*}
Thus, from the above and  \eqref{022901} we get
$$
\sup_{t\ge0}\mathbb{E}|X_t(\xi)|^{2+\delta}<+\infty
$$ and therefore H3) holds.

\paragraph{\textbf{Verification of  H1)}}

Estimate \eqref{013001} together with formula \eqref{022901} guarantee that the space ${\cal P}_1$ is preserved under $P^t$.
Suppose that $X_t(\xi)$ and  $X_t(\overline{\xi})$ are two processes that at  $t=0$ equal $\xi$ and $\bar \xi$, with the laws   $\mu_1$ and  $\mu_2$, respectively. From \eqref{lab56b} we get that for any $\psi \in$Lip$(E)$ 
\begin{eqnarray}
&&|\mathbb{E}\psi (X_t(\xi))-\mathbb{E}\psi (X_t(\overline{\xi}))|\leq \|\psi\|_L\mathbb{E} |X_t(\xi)- X_t(\overline{\xi})|\nonumber\\
&&\leq\|\psi\|_L e^{-\omega t} \mathbb{E}|\xi-\overline{\xi}|\label{lab57}
\end{eqnarray}
Taking the supremum  over all $\psi$ such that $\|\psi\|_L\le 1$  and the infimum over all couplings $(\xi,\bar\xi)$ whose marginals equal $\mu_1$, $\mu_2$, correspondingly on the left and  right hand sides, we obtain
\begin{eqnarray*}
&&d_1 (\mu_1 P^t,\mu_2 P^t)\le e^{-\omega t}d_1(\mu_1,\mu_2),\quad \forall\,t\ge0.
\end{eqnarray*}
Thus, H1) holds.
\end{proof}

\subsection{Two dimensional Navier-Stokes system of equations with Gaussian forcing}

\label{sec7.2}
Let $\mathbb{T}^2$ be a two dimensional torus  understood here  as the product of two copies of $[-1/2,1/2]$ with identified endpoints.
Suppose that $u(t,x)=(u^1(t,x),u^2(t,x))$  and $p(t,x)$ are respectively a  two dimensional vector valued and a scalar valued field, defined for $(t,x)\in[0,+\infty)\times \bbT^2$. They  satisfy  the  two dimensional Navier--Stokes equation system with  forcing $F(t,x)=(F^1(t,x),F^2(t,x))$, i.e.
\begin{equation}
\label{E11}
\begin{aligned}
\partial_t u^{i}(t,x)&+ u(t,x)\cdot \nabla_x u^{i}(t,x)&&\\
&=  \Delta_x u^{i}(t,x) -\partial_{x_i} p(t,x)+ F^i(t,x),\quad i=1,2\\
&\sum_{j=1}^2\partial_{x_j}u^j(t,x)= 0,\\
& u(0,x)=u_0(x).
\end{aligned}
\end{equation}
Here $\Delta_x$, $\nabla_x$ denote the Laplacian and  gradient operators and $u_0(x)$ is the initial data. We shall be concerned with the asymptotic description of functionals of the form
$\int_0^t\psi(u(s))ds$ in case $F(t,x)$ is a  Gaussian white noise  in time and $\psi$ is a Lipschitz continuous observable on an appropriate state space.
Below, we recap briefly some  of the results of \cite{HM1}. Assume that $(\Om,{\frak F},\{{\frak F}_t,\,t\ge0\},\bbP)$ and $\{W(t),\,t\ge0\}$ are a filtered probability space and a Wiener process on Hilbert space $H=L^2_0(\bbT^2)$ - made of  square integrable, zero mean functions -  as in the previous section equipped with the norm $|\cdot|$.  The orthonormal base  $e_p$ appearing in \eqref{Q} is given by 
$$
e_p(x):=\exp\{2\pi ip\cdot x\},\quad p=(p^1,p^2)\in\bbZ^2_*:=\bbZ^2\setminus\{(0,0)\}
$$ (we abuse slightly the notation admitting a two parameter index). 

We rewrite the system \eqref{E11} using the vorticity formulation, i.e. we write an equation for the scalar, called vorticity, 
$$
\om(t):={\rm rot}\,u(t)=\partial_{x_2}u^1(t)-\partial_{x_1}u^2(t).
$$
 It satisfies then an It\^o stochastic differential equation
\begin{eqnarray}\label{E25}
&&d \om(t;w) =[\Delta_x\om(t;w) +B(\om(t;w))]d t + d  W(t), \\
&&
 \om(0;w)=w\in H.\nonumber
\end{eqnarray}
Here 
$$
B(\om):=-\sum_{j=1}^2{\cal K}^j(\om)\partial_{x_j} \om,
$$
with ${\cal K}:=({\cal K}^1,{\cal K}^2)$ given by
$
{\cal K}(\om)=\sum_{p\in\bbZ^2_*}p^{\perp}|p|^{-2}\langle \om,e_p\rangle e_p
$ and $p^{\perp}=(p^2,-p_1)$. The existence and  uniqueness result and continuous dependence of solutions  on the initial data for \eqref{E25} can be found in  e.g. \cite{dpz1}. As a result the solutions $\{\om(t;w),\,t\ge 0\}$ determine a Feller, Markov family of $H$-valued processes. Denote by $\{P^t,\,t\ge0\}$ the corresponding transition probability semigroup and its dual acting on measures. 
Following \cite{HM1} we adopt the non-degeneracy of the noise assumption that can be stated as follows:
\begin{itemize}
\item[ND)] the set ${\cal Z}:=[p:\gamma_p\not=0]$  is finite, symmetric with respect to $0$, generates $\bbZ^2$, i.e.  integer linear combinations of elements of ${\cal Z}$ yield the entire $\bbZ^2$ and there exists at least two $p_1,p_2$ with $|p_1|\not=|p_2|$ such that $\gamma_{p_i}\in{\cal Z}$ for $i=1,2$.
\end{itemize}
For any $\eta>0$ define also $V:H\to[0,+\infty)$ by $V(w):=\exp\left\{\eta|w|^2\right\}$ and a metric
$$
\rho(w_1,w_2)=\inf_{\gamma}\int_0^1V(\gamma(s))|\dot\gamma(s)|ds,
$$
where infimum is taken over all $C^1$ smooth functions $\gamma(s)$ such that $\gamma(0)=w_1$, $\gamma(1)=w_2$. It is clear that $\rho$ metrizes the strong topology of $H$ and is equivalent with the metric induced by the norm on any  finite ball. 
Denote by $C^1_\eta(H)$ the space of functionals $\psi:H\to\bbR$ that possess Frechet derivative $D\psi$ satisfying
$$
\|\psi\|_{\eta}:=\sup_{u\in H}e^{-\eta|w|^2}\left(|\psi(u)|+\|D\psi(u)\|\right)<+\infty.
$$
It is elementary to verify the following.
\begin{prop}
\label{prop012511}
We have
$$
\psi(w_2,w_1)\le \|\psi\|_{\eta}\rho(w_2,w_1),\quad\forall\,w_1,w_2\in H.
$$ 
\end{prop}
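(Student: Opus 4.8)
The plan is to reduce the claimed bound to a path integral and then optimize over paths. First I would fix an arbitrary $C^1$ curve $\gamma:[0,1]\to H$ with $\gamma(0)=w_1$ and $\gamma(1)=w_2$ (the class over which $\rho(w_1,w_2)$ is defined as an infimum), and observe that, since $\psi\in C^1_\eta(H)$ is Fr\'echet differentiable at every point and $\gamma$ is differentiable, the chain rule shows that the scalar function $s\mapsto\psi(\gamma(s))$ is differentiable on $[0,1]$ with derivative $D\psi(\gamma(s))[\dot\gamma(s)]$. Because $s\mapsto|\gamma(s)|$ and $s\mapsto|\dot\gamma(s)|$ are continuous on the compact interval $[0,1]$, the defining growth bound $\|D\psi(u)\|\le\|\psi\|_{\eta}\,e^{\eta|u|^2}$ makes this derivative bounded on $[0,1]$, so $s\mapsto\psi(\gamma(s))$ is Lipschitz, hence absolutely continuous, and the fundamental theorem of calculus yields $\psi(w_2)-\psi(w_1)=\int_0^1 D\psi(\gamma(s))[\dot\gamma(s)]\,ds$.

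Next I would estimate the integrand pointwise, using that $D\psi(u)$ is a bounded linear functional: $|D\psi(\gamma(s))[\dot\gamma(s)]|\le\|D\psi(\gamma(s))\|\,|\dot\gamma(s)|\le\|\psi\|_{\eta}\,e^{\eta|\gamma(s)|^2}|\dot\gamma(s)|=\|\psi\|_{\eta}\,V(\gamma(s))|\dot\gamma(s)|$. Integrating over $s\in[0,1]$ gives $|\psi(w_2)-\psi(w_1)|\le\|\psi\|_{\eta}\int_0^1 V(\gamma(s))|\dot\gamma(s)|\,ds$. Taking the infimum of the right-hand side over all admissible curves $\gamma$ and recalling the definition of $\rho$ then produces $|\psi(w_2)-\psi(w_1)|\le\|\psi\|_{\eta}\,\rho(w_2,w_1)$, which is exactly the asserted inequality (with $\psi(w_2,w_1)$ understood as the increment $\psi(w_2)-\psi(w_1)$, and symmetry in $w_1,w_2$ disposing of the absolute value).

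I do not expect any genuine obstacle here — the statement is indeed ``elementary'' — so the only two points I would bother to justify in a line or two are: (i) the legitimacy of the fundamental theorem of calculus for $s\mapsto\psi(\gamma(s))$, which follows from boundedness of its derivative on the compact interval $[0,1]$ as above; and (ii) the tacit finiteness of $\rho(w_1,w_2)$, which one gets by testing with the straight segment $\gamma(s)=(1-s)w_1+sw_2$ and using convexity of the norm to bound $V(\gamma(s))\le\exp\{\eta(|w_1|\vee|w_2|)^2\}$, so that $\rho(w_1,w_2)\le\exp\{\eta(|w_1|\vee|w_2|)^2\}\,|w_2-w_1|<\infty$.
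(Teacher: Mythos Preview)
Your argument is correct and is precisely the elementary verification the paper has in mind; the paper itself gives no proof beyond the sentence ``It is elementary to verify the following,'' so there is nothing further to compare. Your reading of $\psi(w_2,w_1)$ as the increment $|\psi(w_2)-\psi(w_1)|$ is the intended one.
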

Denote by $d_1(\cdot,\cdot)$ the corresponding Wasserstein metric on ${\cal P}_1(H,\rho)$ - the space of probability measures on $H$ having the first moment with respect to metric $\rho$. The following theorem summarizes  the  results   of  \cite{HM,HM1} that are of particular interest for us 
\begin{thm}\label{T1}
Under the assumptions made above the following hold:
\begin{itemize} 
\item[1)] there exists $\nu_0$ such that for any $\nu\in(0,\nu_0]$ and $T>0$ there exist $C>0$ for which
\begin{equation}
\label{012011}
\bbE\exp\left\{\nu\om^2(t)\right\}\le C\bbE\exp\left\{\nu e^{-t}\om^2(0)\right\},\quad\forall\,t\ge0,
\end{equation}
\item[2)] we have $P^t({\cal P}_1(H,\rho))\subset {\cal P}_1(H,\rho)$ for all $t\ge0$ and there exist $\hat c,\gamma>0$ such that estimate \eqref{022511} holds.
\end{itemize}
\end{thm}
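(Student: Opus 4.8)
The assertions of Theorem~\ref{T1} are, in substance, the results of \cite{HM,HM1}, so the proof amounts to recalling the relevant statements and checking that assumption ND) matches their hypotheses; we indicate the main points.

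For part 1), one applies It\^o's formula to the functional $w\mapsto\exp\{\nu|w|^2\}$ along the solution of \eqref{E25}. The Laplacian contributes $-2\nu|\nabla_x\om(t)|^2\exp\{\nu|\om(t)|^2\}$; the nonlinearity contributes $2\nu\langle B(\om(t)),\om(t)\rangle\exp\{\nu|\om(t)|^2\}$, which vanishes identically because ${\cal K}(\om)$ is divergence free and hence $\langle B(\om),\om\rangle=-\tfrac12\int_{\bbT^2}{\cal K}(\om)\cdot\nabla_x(\om^2)\,dx=0$; and the It\^o correction from $W(t)$ equals $\big(\nu\,\mathrm{Tr}\,Q+2\nu^2\langle Q\om(t),\om(t)\rangle\big)\exp\{\nu|\om(t)|^2\}$, with $\mathrm{Tr}\,Q=\sum_p\gamma_p^2<\infty$ and $\langle Q\om,\om\rangle\le\|Q\|\,|\om|^2$. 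Invoking the Poincar\'e inequality $|\nabla_x\om|^2\ge\lambda_1|\om|^2$ for zero-mean functions on $\bbT^2$ (with $\lambda_1=4\pi^2>1$) and fixing $\nu_0>0$ so small that $2\nu_0\|Q\|\le\lambda_1$, one is led, for $\nu\in(0,\nu_0]$, to a differential inequality $\frac{d}{dt}\bbE\exp\{\nu|\om(t)|^2\}\le-c\,\bbE\big[|\om(t)|^2\exp\{\nu|\om(t)|^2\}\big]+\nu\,\mathrm{Tr}\,Q\cdot\bbE\exp\{\nu|\om(t)|^2\}$ with $c>0$. The time-dependent Lyapunov function / supermartingale argument of \cite{HM} (checking that $\exp\{\nu e^{-ct}|\om(t)|^2+g(t)\}$ is a supermartingale for a suitable $c>0$ and bounded $g$), together with $e^{-\lambda_1 t}\le e^{-t}$, then yields \eqref{012011}.

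For part 2), taking the straight segment $\gamma(s)=sw$ in the definition of $\rho$ gives the elementary bound $\rho(w,0)\le|w|\,e^{\eta|w|^2}$, hence $\rho(w,0)\le C_{\eta'}e^{\eta'|w|^2}$ for every $\eta'>\eta$; since $\eta$ is chosen with $\eta<\nu_0$, part 1) applied with $\nu=\eta'\le\nu_0$ gives $\bbE\exp\{\eta'|\om(t)|^2\}\le C\,\bbE\exp\{\eta'e^{-t}|\om(0)|^2\}$, and for $t>0$ the factor $e^{-t}<1$ makes the right side finite whenever $\om(0)$ has law in ${\cal P}_1(H,\rho)$, so $\bbE\rho(\om(t),0)<\infty$ for all $t\ge0$ and $P^t$ leaves ${\cal P}_1(H,\rho)$ invariant. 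The contraction \eqref{022511} is the main theorem of \cite{HM1}: under ND) the brackets generated by the constant drift directions $\{e_p:p\in{\cal Z}\}$ under the Navier--Stokes vector field span every Fourier mode, which through the Malliavin-calculus argument of \cite{HM,HM1} produces a gradient (``asymptotic strong Feller'') estimate for $P^t$; combined with the exponential Lyapunov structure of part 1), this gives, in the framework of \cite{HM1}, a uniform contraction $\|P^t\psi\|_{{\rm Lip}_\rho}\le\hat c\,e^{-\gamma t}\|\psi\|_{{\rm Lip}_\rho}$ for observables Lipschitz with respect to $\rho$, equivalently \eqref{022511} by the Kantorovich--Rubinstein duality. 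It remains only to verify that the standing hypotheses here --- finite, symmetric, $\bbZ^2$-generating ${\cal Z}$ containing two modes of distinct length --- coincide with those of \cite{HM,HM1}, which they do.

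The only genuinely new verification is the a priori estimate of part 1) and the bookkeeping relating $\rho$ to the $L^2$ norm; the substantive obstacle --- the gradient estimate underlying \eqref{022511}, proved in \cite{HM,HM1} by Malliavin calculus and a H\"ormander-type hypoellipticity argument adapted to the infinite-dimensional, degenerately forced setting --- is invoked here as a cited result.
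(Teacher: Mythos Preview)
Your proposal is correct and aligns with the paper's own treatment: the paper gives no proof at all, merely stating that part 1) follows from estimate (A.5) of \cite{HM} and part 2) from Theorem 3.4 of \cite{HM1}. You take the same citation-based approach but add sketches of the underlying arguments (the It\^o-formula/supermartingale computation for the exponential moment, the Malliavin--H\"ormander machinery behind the Wasserstein spectral gap), which the paper omits entirely.
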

Part 1) of the theorem follows from estimate (A.5) of \cite{HM}, while part 2) is a consequence of Theorem 3.4 of \cite{HM1}. Choosing $\eta>0$, in the definition of metric  $\rho(\cdot,\cdot)$,  sufficiently small we conclude from part 1) of Theorem \ref{T1} that hypotheses H2) and H3) hold. Part 2) allows us to conclude hypothesis H1). As we have already mentioned hypothesis H0) concerning Feller property  also holds, therefore by virtue of Theorem \ref{lab3} we conclude the following.
\begin{thm}\label{T1a}
Suppose that $\psi\in C^1_\eta(H)$. Then,  the functional $\int_0^t\psi(\om(s))ds$ satisfies the conclusion 1) - 3) of Theorem $\ref{lab3}$.
\end{thm}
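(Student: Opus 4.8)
The plan is to deduce Theorem \ref{T1a} from the main result, Theorem \ref{lab3}, applied to the Feller Markov family $\{\om(t;w),\,t\ge0\}$ determined by \eqref{E25} on the Polish metric space $(H,\rho)$: concretely, I would verify that hypotheses H0)--H3) hold in this setting and that $\psi$ is an admissible observable. That $\psi$ is Lipschitz on $(H,\rho)$, with $\|\psi\|_{L}\le\|\psi\|_{\eta}$, is exactly the content of Proposition \ref{prop012511}. For the Lyapunov-type estimates below I would also record the elementary bound obtained by inserting the straight-line path $\gamma(s)=sw$ into the definition of $\rho$, namely $\rho(w,0)\le|w|\exp\{\eta|w|^2\}$, and hence, after absorbing the polynomial factor, $\rho^{2+\delta}(w,0)\le C_{\eta}\exp\{(2+\delta+1)\eta|w|^2\}$.

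Hypothesis H0) is clear: the existence, uniqueness and continuous dependence on the initial datum of mild solutions of \eqref{E25} (as recalled before Theorem \ref{T1}) give that $\{P^t\}$ is Feller and stochastically continuous, and since $\rho$ metrizes the strong topology of $H$ these topological properties are unaffected by replacing the norm metric by $\rho$. Hypothesis H1) is immediate from part 2) of Theorem \ref{T1}: there one has $P^t({\cal P}_1(H,\rho))\subset{\cal P}_1(H,\rho)$ together with the exponential Wasserstein contraction \eqref{022511}, with the constants $\hat c,\gamma$ supplied by \cite{HM1}.

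The core of the argument is the verification of H2) and H3), and the whole point is to fix $\eta>0$ small enough. With $\nu:=(2+\delta+1)\eta$ chosen so that $\nu\le\nu_0$, I would combine the bound $\rho^{2+\delta}(w,0)\le C_{\eta}\exp\{\nu|w|^2\}$ with the super-Lyapunov estimate \eqref{012011} of Theorem \ref{T1}, part 1), to get
\begin{equation*}
\bbE\,\rho^{2+\delta}\big(\om(t;w),0\big)\le C_{\eta}\,\bbE\exp\{\nu\om^2(t;w)\}\le C\exp\{\nu e^{-t}|w|^2\},\qquad t\ge0,
\end{equation*}
which is finite and bounded uniformly for $t\ge0$ and $w$ in any ball $B_R(0)$; this yields H2). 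Running the same estimate with the initial vorticity $\om(0)$ (deterministic, as in \eqref{E25}, or more generally with a law having the exponential moment implicit in membership in ${\cal P}_1(H,\rho)$, after possibly shrinking $\eta$ once more) and using $e^{-t}\le1$ gives $\sup_{t\ge0}\bbE\,\rho^{2+\delta}(\om(t),0)<\infty$, which is H3). Having checked H0)--H3) and $\psi\in{\rm Lip}(H,\rho)$, Theorem \ref{lab3} delivers conclusions 1)--3) for $\int_0^t\psi(\om(s))ds$, i.e.\ Theorem \ref{T1a}.

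The only genuine difficulty, as already signalled by the remark in the text that $\eta$ should be taken sufficiently small, is the simultaneous bookkeeping of constants: $\eta$ must be small enough that (a) the spectral-gap estimate of \cite{HM1} quoted in Theorem \ref{T1}, part 2), is valid for the metric $\rho=\rho_\eta$, and (b) the exponent $(2+\delta)\eta$ — or the slightly larger $(2+\delta+1)\eta$ needed to absorb the polynomial prefactor in $\rho(\cdot,0)$ — stays below the threshold $\nu_0$ of \eqref{012011}; and the observable must lie in $C^1_\eta(H)$ for precisely this $\eta$. Once these compatibility conditions are arranged there is no further obstacle, and the verification of H2)--H3) reduces to the routine exponential-moment computation indicated above.
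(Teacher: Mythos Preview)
Your proposal is correct and follows essentially the same route as the paper: verify H0)--H3) for the vorticity process on $(H,\rho)$ via Theorem \ref{T1} (part 2) for H1), part 1) with $\eta$ small for H2)--H3)), invoke Proposition \ref{prop012511} for the Lipschitz property of $\psi$, and then apply Theorem \ref{lab3}. You have in fact supplied more detail than the paper does --- the explicit bound $\rho(w,0)\le|w|e^{\eta|w|^2}$ via the straight-line path, the absorption of the polynomial prefactor, and the compatibility constraints on $\eta$ --- where the paper simply asserts that H2)--H3) follow from \eqref{012011} for $\eta$ small.
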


We should also mention that  the proof of the central limit theorem, in the perhaps most interesting, from the physical viewpoint, case of an additive functional of the point evaluation of the Eulerian velocity $u(s,x):={\cal K}(\om(t))(x)$ is slightly more involved.  The respective observable  is not  Lipschitz and the results of the present paper are not directly applicable. In that case one can use the regularization result of \cite{HM1}, see Proposition 5.12.

%

\appendix
\section{Proof of the central limit theorem for martingales}

\label{appA}

{\em Proof of \eqref{M2b}.}
 Suppose first that $N=\ell K$ for some positive integers $K, \ell$. Then,
 \begin{eqnarray*}
 &&\Big|\mathbb{E}\left[\frac{1}{N}<\!M\!>_{N}\right]-\sigma^2\Big|
 \\
 &&
 \le\frac{1}{\ell}\sum_{m=1}^{\ell}\mathbb{E}\Big|\frac{1}{K}\bbE\left[<\!M\!>_{mK-1}-<\!M\!>_{(m-1)K}\Big|{\frak F}_{(m-1)K} \right]-\sigma^{2}\big |\to 0
  \end{eqnarray*}
as $\ell\to+\infty$ and then $K\to+\infty$ (in this order).
When $N=\ell K+r$ for some $0\le r\le K-1$ we can use the above result and \eqref{sublinear} to conclude \eqref{M2b}.

{\em Proof of \eqref{052501}.}
The following argument is a modification of the proof coming from Chapter 2 of  \cite{klo}.   Choose an arbitrary $\rho>0$.
Recall that for all $a\in\bbR\setminus\{0\}$ we can write
\begin{equation}
e^{ia} \;=\; 1 \;+\; ia  \;-\; a^2/2  \;-\; R(a) a^2 \label{lab312}
\end{equation}
where $R(0)=0$ and
$$
R(a)\;:=\; a^{-2} \int_0^a da_1 \int_0^{a_1} (e^{ix}-1) \, dx\; \mbox{ for }a\not=0.
$$
It satisfies $|R(a)|\le 1$ and
\begin{equation}
\label{012501}
\lim_{a\to0}R(a)=0.
\end{equation}
To simplify the notation we introduce the following abbreviations
\begin{eqnarray}
\label{072501}
&&A_j:=(\theta/\sqrt{N})  Z_{j+1}, \quad R_j:=R (
A_j ),\\
&&
\Delta_j:=\mathbb{E}e^{i(\theta/\sqrt{N}) M_{j+1}}  -
\mathbb{E}e^{i(\theta/\sqrt{N}) M_{j}} ,\\
&&
e_{j,N}:=e^{i(\theta/\sqrt{N}) M_{j}} . 
\end{eqnarray}
Using the fact that
$\mathbb{E} [ Z_{j+1} \,\vert\, \mathfrak{F}_j]$ $= 0$ we can write
\begin{eqnarray*}
&&\mathbb{E}e_{j+1,N}  \;=\; \mathbb{E}\Big [e_{j,N}  \Big\{ 1 \;+\;
\mathbb{E}\Big [e^{iA_j} \;-\; 1
\;-\; A_j \,
\Big\vert\, \mathfrak{F}_j\Big]\, \Big\}\, \Big].
\end{eqnarray*}
From \eqref{lab312} we get
\begin{eqnarray}\label{lab31}
\Delta_j = -\;  \frac {\theta^2}{2N} \, \mathbb{E}\Big [
e_{j,N} \, Z_{j+1}^2 \Big] 
\;- \;  \frac {\theta^2}{N} \, \mathbb{E}\Big[e_{j,N} \, Z_{j+1}^2 R_j\, \Big] \; .
\end{eqnarray}
Hence,
\begin{eqnarray}\label{lab32}
&& e^{\theta^2\sigma^2(j+1)/(2N)} \, \mathbb{E} e_{j+1,N}  \;-\; e^{\theta^2\sigma^2j/(2N)} \,
 \mathbb{E}e_{j,N}  \\
&& =\; e^{\theta^2\sigma^2(j+1)/(2N)} \Delta_j +\; e^{\theta^2\sigma^2(j+1)/(2N)}
\Big[ 1- e^{-\theta^2\sigma^2/(2N)} \Big]
\mathbb{E} e_{j,N} \;\nonumber.
\end{eqnarray}
Using \eqref{lab31} we conclude that  the right hand side of the above
equation equals 
\begin{eqnarray*}
&& e^{\theta^2\sigma^2(j+1)/(2N)} \Big\{-\;  \frac {\theta^2}{2N} \, \mathbb{E}\Big [
e_{j,N} \, Z_{j+1}^2 \Big] -\frac {\theta^2}{N}\,\mathbb{E}\Big[e_{j,N}\,Z_{j+1}^2 \, R_j\Big] \Big\}\\
&&+\; e^{\theta^2\sigma^2(j+1)/(2N)}
\Big[ 1- e^{-\theta^2\sigma^2/(2N)} \Big]
\mathbb{E} e_{j,N}\\
&&= -\frac{\theta^2}{2N} \, e^{\theta^2\sigma^2(j+1)/(2N)}
\, \mathbb{E}\Big [ e_{j,N} \,(Z_{j+1}^2-\sigma^2) \Big] \\
&&  -\; \frac{\theta^2}{N} \, e^{\theta^2\sigma^2(j+1)/(2N)}
\mathbb{E}\Big [e_{j,N} \, Z_{j+1}^2\, R_j \Big] \\
&& +\; e^{\theta^2\sigma^2(j+1)/(2N)}
\Big\{ 1-\frac{(\theta\sigma)^2}{2N} -e^{-\theta^2\sigma^2/(2N)}\Big\} \,
\mathbb{E}\Big [e_{j,N} \Big]\;.
\end{eqnarray*}
 Summing up over $j$ from $0$ to $N-1$ and  ($M_0=0$)
we get
\begin{eqnarray}\label{lab33}
&& e^{(\theta^2\sigma^2)/2}\,\mathbb{E}\,e^{i(\theta\sqrt{N})M_{N}}-1\nonumber\\
&&=-\; \frac {\theta^2}{2N} \sum_{j=0}^{N-1}
e^{\theta^2\sigma^2(j+1)/(2N)} \, \mathbb{E}\Big [
e_{j,N} (Z_{j+1}^2-\sigma^2) \Big]\nonumber \\
&&   -\;  \frac {\theta^2}{N} \sum_{j=0}^{N-1}
e^{\theta^2\sigma^2(j+1)/(2N)}
\mathbb{E}\Big[ e_{j,N} \, Z_{j+1}^2 \,
R_j \Big]  \\
&&  +\; \sum_{j=0}^{N-1} e^{\theta^2\sigma^2(j+1)/(2N)}
\Big\{ 1-\frac{(\theta\sigma)^2}{2N} -
e^{-\theta^2\sigma^2/(2N)}\Big\}
\mathbb{E} \Big[ e_{j,N} \Big] \; . \nonumber
\end{eqnarray}
Denote the expressions appearing on the right hand side of  \eqref{lab33} by  $I_{N},I\!\!I_{N},I\!\!I\!\!I_{N}$ respectively.

{\em The term $I\!\!I\!\!I_{N}$.}
Using Taylor expansion for  $\exp\{-\theta^{2}\sigma^{2}/2N\}$
we can easily estimate
$
|I\!\!I\!\!I_{N}|\leq
C/N$ for some $C>0$ independent of $N$, so $\lim_{N\to+\infty}|I\!\!I\!\!I_{N}|=0$.

{\em The term $I\!\!I_{N}$.}
Fix   $\varepsilon>0.$ Then, there exists $C>0$ such that 
\begin{eqnarray}\label{lab34}
&&|I\!\!I_{N}|\leq 
\frac {C}{N}\sum_{j=0}^{N-1} \mathbb{E}\Big[ Z_{j+1}^2 \, \big| R_j\big|,  \,
|Z_{j+1} | \ge \varepsilon\sqrt{N}  \Big]\ \\
&+& \; \frac {C}{N}
\sum_{j=0}^{N-1} \mathbb{E}\Big[ Z_{j+1}^2 \, \big| R_j\big|,\, |Z_{j+1} | < \varepsilon\sqrt{N} \Big]=I\!\!I_{N1}+I\!\!I_{N2}\nonumber
\end{eqnarray}
Since  $|R_j|\le 1$ we have
 \begin{eqnarray*}
I\!\!I_{N1}\leq
 \frac {C}{N}\sum_{j=0}^{N-1} \mathbb{E}\Big[ Z_{j+1}^2, \,
| Z_{j+1} | \ge \varepsilon\sqrt{N}  \Big]\to0
 \end{eqnarray*}
as $N\to+\infty$, by virtue of M1).

 As for the second term on the utmost right hand side of \eqref{lab34} we can write that
 \begin{eqnarray*}
 && I\!\!I_{N2}\leq \frac{C}{N}\,\sup_{|h|<\varepsilon}|R(h)|\sum_{j=0}^{N-1}\mathbb{E}[\,\mathbb{E}[Z_{j+1}^{2}|\,\mathfrak{F}_{j}]]\\
 &&=C\,\sup_{|h|<\varepsilon}|R(h)|\,\mathbb{E}\left[\frac{<\!M\!>_{N}}{N}\right]
 \end{eqnarray*}
 Since $\sup_{|h|<\varepsilon}|R(h)|$ tends to $0$, as $\varepsilon\uparrow 0$ (see \eqref{012501}), using \eqref{M2b} we conclude that 
 \begin{equation}
 \label{062501}
 \limsup_{N\to+\infty}I\!\!I_{N2}<\frac{\rho}{2},
 \end{equation}
provided that $\varepsilon$ is chosen sufficiently small (independent of $N$). The value of $\rho>0$ appearing on the right hand side has been chosen at the beginning of the proof.

{\em The term $I_{N}$.}
To simplify notation we let  $\beta :=(\theta ^2\sigma ^2)/2.$
Fix  $K\geq 1$, and assume that $N=\ell K+r$, with $0\le r\le K-1$. Divide $\Lambda_N = \{0, \dots, N-1\}$ into
$\ell+1 $ blocks, $\ell$ of them of size $K,$ the last one of size $r$, i.e.
$
 \Lambda_N \;=\; \bigcup_{m=0}^{\ell-1} I_m, 
$
where $ I_m = \{m K , \dots, (m +1)K   -1\}$ for  $ m<\ell$ and 
$I_\ell = \{\ell K, \dots,\ell K+r\}$.
To simplify the consideration let us assume that all intervals $I_m$ (including the last one) have length $K$. 
 Then,
\begin{eqnarray}
 &&|I_{N}|\leq\frac{C}{N}e^{\beta/N}\Big| \sum_{m=0}^{\ell-1} \sum_{j\in I_m} e^{j\beta/N}
\mathbb{E}\Big [e_{j,N} \{ \sigma^2 - Z^{2}_{j+1} \} \Big]\Big|\label{lab316}\\
&&\leq\frac C N \Big| \sum_{m=0}^{\ell-1} e^{[(m-1)K+1]\beta/N}\sum_{j\in I_m}\big\{ e^{[j-(m-1)K]\beta/N}-1\big\}
\mathbb{E}\Big [e_{j,N} \{ \sigma^2 - Z^{2}_{j+1} \} \Big]\Big| \nonumber\\
&&+\frac C N e^{\beta/N}\Big|\sum_{m=0}^{\ell-1} \sum_{j\in I_m} e^{(m-1)K\beta/N}
\mathbb{E}\Big [e_{j,N} \{ \sigma^2 - Z^{2}_{j+1} \} \Big]\Big|\nonumber
\end{eqnarray}
Denote the two terms on the utmost right hand side of  \eqref{lab316} 
respectively by $I_{N,1}$ and $I_{N,2}$.
Since $|e^{x}-1|\leq Cx$ for all $x\in [0,1]$, letting $x=[j-(m-1)K]\beta/N$,
we get 
\begin{eqnarray*}
I_{N,1}\le \frac { CK}{N^2}\sum_{j=1}^N \left(1+\bbE Z_j^2\right)\to 0
\end{eqnarray*}
as $\ell\to+\infty$, in light of \eqref{sublinear}.

 As for the other term we can write
\begin{eqnarray}\label{lab35}
&&I_{N,2}\le  \frac C N \sum_{m=0}^{\ell-1}   \,
 \, \Big\vert \mathbb{E}\left[ \sum_{j\in I_m}e_{(m-1)K,N}
\{ \sigma^2 - \mathbb{E}[Z^{2}_{j+1}|\mathfrak{F_{j}}] \} \right]\Big\vert \,  \nonumber \\
&& +\frac C N \sum_{m=0}^{\ell-1} 
\mathbb{E}\, \Big\vert \sum_{j\in I_m}\!\! \Big\{ e_{j,N} - e_{(m-1)K,N}\Big\}
\{ \sigma^2\!\! -\!\! \mathbb{E}[Z^{2}_{j+1}|\mathfrak{F_{j}}] \} \Big\vert\, \nonumber.
\end{eqnarray}
The two expressions on the right hand side shall be denoted by $J_{N1}$ and  $J_{N2}$, respectively. Then,
\begin{eqnarray}
&&J_{N,1}\le\frac C\ell \sum_{m=0}^{\ell-1}   \,
\mathbb{E} \, \Big\vert  \sigma^2 \!\!-\!\! \frac 1K \bbE\left[<\!M\!>_{mK-1}-<\!M\!>_{(m-1)K}\Big|{\frak F}_{(m-1)K}\right]  \Big\vert.\nonumber
\end{eqnarray}
This expression tends to $0$, when $\ell\to+\infty$ and then subsequently $K\to+\infty$, by virtue of M2).

As for  $J_{N,2}$ it equals
\begin{equation}
\label{082501}
\frac C N \!\sum_{m=0}^{\ell-1} \!
\mathbb{E}\Big\vert\! \!\sum_{j\in I_m}\!\!\Big\{\![ e^{i(\theta/\sqrt{N})
  (M_{j}-M_{(m-1)K})}\!-\!1]e_{(m-1)K,N} \!\Big\}\!
\{ \sigma^2 \!\!-\!\! \mathbb{E}[Z^{2}_{j+1}|\mathfrak{F_{j}}] \} \Big\vert.
\end{equation}
Consider two events:  $F:=[|( M_{j}-M_{(m-1)K})/\sqrt{N}|<\varepsilon]$ and its complement $F^c:=[| (M_{j}-M_{(m-1)K})/\sqrt{N}|\geq\varepsilon] $ and split the integration accordingly. We obtain two terms $L_{N,1}$, $L_{N,2}$ depending on whether we integrate over $F$, or $F^c$ respectively.  Using a well known estimate $|e^{i\varepsilon}-1|\leq\varepsilon$  we get
\begin{eqnarray*}
&&
L_{N,1}\leq  \frac{ C\varepsilon}{N} \sum_{j=1}^{N} (1+\bbE Z_{j+1}^2). 
\end{eqnarray*}
As a result of \eqref{sublinear} we conclude that
\begin{equation}
\label{092501}
\limsup_{\ell\to+\infty}L_{N,1}<\frac{\rho}{2},
\end{equation}
provided that $\rho>0$ is sufficiently small.
In the other case we get ($N=\ell K$)
\begin{eqnarray*}
&&
L_{N,2}\leq \frac{C}{\ell K}\sum_{m=0}^{\ell-1} \sum_{j=(m-1)K}^{mK-1}\mathbb{E}[1+Z_{j+1}^2,\, |M_{j}-M_{(m-1)K}|\geq\varepsilon\sqrt{\ell K}]
\end{eqnarray*}
and using \eqref{m3} we conclude that
$$
\lim_{K\to+\infty}\limsup_{\ell\to+\infty}L_{N,2}=0.
$$
The above argument allows us to conclude that if $N=\ell K+r$ for some $0\le r\le K-1$, then
$$
\limsup_{K\to+\infty}\limsup_{\ell\to+\infty}\left|e^{(\theta^2\sigma^2)/2}\,\mathbb{E}\,e^{i(\theta\sqrt{N})M_{N}}-1\right|<\rho
$$
for any $\rho>0$. This of course implies the desired formula \eqref{052501}.

\bigskip

\subsection*{Acknowledgements} Both authors wish to express their thanks to prof. T. Szarek for pointing out reference \cite{guivarch}. They are also profoundly grateful to an anonymous referee of the paper for careful reading of the manuscript and very helpful remarks that lead to its significant improvement.

\end{document}